\newcommand{\N}{{\mathbb N}}
\newcommand{\Z}{{\mathbb Z}}
\newcommand{\Q}{{\mathbb Q}}
\newcommand{\R}{{\mathbb R}}
\newcommand{\C}{{\mathbb C}}
\newcommand{\cE}{{\mathcal E}}
\newcommand{\cF}{{\mathcal F}}
\newcommand{\cH}{{\mathcal H}}
\newcommand{\cM}{{\mathcal M}}
\newcommand{\cO}{{\mathcal O}}
\newcommand{\cS}{{\mathcal S}}
\newcommand{\z}{\zeta}
\newcommand{\reg}{{\rm Reg}}
\newcommand{\ord}{{\rm ord}}
\newtheorem{thm}{Theorem}
\newtheorem{lem}{Lemma}
\newtheorem{cor}{Corollary}
\newtheorem{prop}{Proposition}
\newtheorem{rmk}{Remark}
\newtheorem{exm}{Example}
\newtheorem{defn}{Definition}
\newcommand{\thmref}[1]{Theorem~\ref{#1}}
\newcommand{\propref}[1]{Proposition~\ref{#1}}
\newcommand{\lemref}[1]{Lemma~\ref{#1}}
\newcommand{\corref}[1]{Corollary~\ref{#1}}
\newcommand{\rmkref}[1]{Remark~\ref{#1}}
\newcommand{\exmref}[1]{Example~\ref{#1}}
\DeclareFontFamily{U}{wncy}{}
\DeclareFontShape{U}{wncy}{m}{n}{<->wncyr10}{}
\DeclareSymbolFont{mcy}{U}{wncy}{m}{n}
\DeclareMathSymbol{\Sh}{\mathord}{mcy}{"58}
\begin{document}

\title[Laurent type expansion of multiple zeta functions]
{Multiple Stieltjes constants and Laurent type expansion of the multiple
zeta functions at integer points}

\author{Biswajyoti Saha}

\address{Biswajyoti Saha\\ \newline
School of Mathematics and Statistics, University of Hyderabad,
Prof. C.R. Rao Road, Gachibowli, Hyderabad - 500046, India}
\email{biswa.imsc@gmail.com, biswa@uohyd.ac.in}

\subjclass[2010]{11M32}

\keywords{Multiple zeta functions, Multiple Stieltjes constants, Laurent type expansion}

\date{\today}

\begin{abstract}
In this article, we study the local behaviour of the multiple zeta functions at integer points
and write down a Laurent type expansion of the multiple zeta functions around these points.
Such an expansion involves a convergent power series whose coefficients are obtained by
a regularisation process, similar to the one used in defining the classical Stieltjes constants for
the Riemann zeta function. We therefore call these coefficients {\it multiple Stieltjes constants}.
The remaining part of the above mentioned Laurent type expansion is then expressed in terms
of the multiple Stieltjes constants arising in smaller depths.
\end{abstract}

\maketitle

\section{Introduction}\label{intro}

Throughout the paper, a natural number will mean a non-negative integer
and their set will be denoted by $\N$.
Let $r$ be a natural number. The multiple zeta function of depth $r$
is the holomorphic function defined in the open set
$$
U_r :=\{(s_1,\ldots,s_r) \in \C^r  : \Re(s_1 + \cdots +s_i)>i \ \text{for} \ 1 \le i \le r \},
$$
by the series expression :
\begin{equation}\label{zeta}
\z(s_1,\ldots,s_r) := \sum_{n_1 > \cdots >n_r>0} n_1^{-s_1} \cdots n_r^{-s_r},
\end{equation}
which converges normally on any compact subset of $U_r$. In particular,
the multiple zeta function of depth $0$ is defined by $\zeta(\varnothing) :=1$.
The meromorphic continuation of the multiple zeta functions is now well known.
This was first established by Zhao \cite{JZ}. The exact set of singularities
was identified by Akiyama, Egami and Tanigawa \cite{AET};
the polar hyperplanes are simple and given by the following equations :
\begin{equation*}
\begin{split}
& s_1=1, \ \text{if} \ r \ge 1;\\
& s_1+s_2=2,1,0,-2,-4,-6, \ldots,\ \text{if} \ r \ge 2;\\
& s_1+\cdots+s_i = i-n, \ \text{ for all integers} \
n,i \ \text{such that} \ n \ge 0 \ \text{and} \ 3 \le i \le r.
\end{split}
\end{equation*}

In this paper, we define for each $(a_1,\ldots,a_r) \in \Z^r$,
and $(k_1,\ldots,k_r) \in \N^r$, a regularised value
$\gamma_{k_1, \ldots, k_r}^{(a_1,\ldots,a_r)}$ for the (not
necessarily convergent) series
\begin{equation}\label{zeta-der-series}
\sum_{n_1 > \cdots >n_r>0}
\frac{\log^{k_1}n_1 \cdots \log^{k_r} n_r}{n_1^{a_1} \cdots n_r^{a_r}}.
\end{equation}
When $(a_1,\ldots,a_r) \in U_r$, the above series converges absolutely and its sum is
$$
(-1)^{k_1+ \cdots + k_r}D^{(k_1,\ldots,k_r)}\z(a_1,\ldots,a_r).
$$
In this case $\gamma_{k_1, \ldots, k_r}^{(a_1,\ldots,a_r)}$
is defined to be this sum. For the general case, we consider the 
truncated finite series
\begin{equation}\label{zeta-der-trunc-series}
\sum_{N > n_1 > \cdots >n_r>0} \frac{\log^{k_1}n_1 \cdots \log^{k_r} n_r}
{n_1^{a_1} \cdots n_r^{a_r}},
\end{equation}
and we show that, as a function of the integer $N$, it has the form
$P(\log N,N) + o(1)$ when $N \to \infty$, where $P$ is
a polynomial in two indeterminates with coefficients in $\Q$.
This polynomial is uniquely determined by $(a_1,\ldots,a_r)$
and $(k_1,\ldots,k_r)$, and
$\gamma_{k_1, \ldots, k_r}^{(a_1,\ldots,a_r)}$ is defined to be
its constant term. In the special case $(a_1,\ldots,a_r)=(1,\ldots,1)$,
we simply denote this number by $\gamma_{k_1, \ldots, k_r}$.

\begin{rmk}\label{rmk-boundary-poly}\rm
If $(a_1,\ldots,a_r) \in \partial U_r$, then the above polynomial
is in fact a polynomial in $\log N$ only (see \rmkref{rmk-order-3} below). 
\end{rmk}

\begin{rmk}\label{rmk-msc}  \rm
In fact, we shall prove that there exists a Laurent series
$F=\sum_{n} F_n(L) X^n \in \Q[L]((X))$ (where $F_n=0$ for
sufficiently small $n$), such that \eqref{zeta-der-trunc-series}
has an asymptotic expansion
$$
\sum_{n \le A} F_n(\log N) N^{-n} + o(N^{-A}),
$$
as $N \to \infty$, for any $A \in \N$.
\end{rmk}

The numbers $\gamma_{k_1, \ldots, k_r}^{(a_1,\ldots,a_r)}$ are called
the {\it multiple Stieltjes constants}
(of order $(k_1, \ldots, k_r)$ at the point $(a_1,\ldots,a_r)$,
when this needs to be specified), as they are nothing but the
classical Stieltjes constants
$$
\gamma_k :=\lim_{N \to \infty} \left(\sum_{1 \le n < N} \frac{\log^k n}{n} -
\frac{\log^{k+1} N}{k+1} \right),
$$
in the particular case when $r=1,a_1=1,k_1=k$. More detailed
discussion about these constants is given in Section \ref{msc}.

It is a classical result (due to Stieltjes (1885), see \cite[Letter 75]{BB})
that the Riemann zeta function has the following Laurent series expansion around $1$ :
\begin{equation}\label{zeta-exp}
\z(s)= \frac{1}{s-1} + \sum_{k\ge 0} \frac{(-1)^k \gamma_k}{k!} (s-1)^k,
\end{equation}
and moreover the series on the right hand side converges on the
whole of $\C$.

Our goal in this paper is to find a similar Laurent type expansion
for $\z(s_1,\ldots,s_r)$ around any integer point $(a_1,\ldots,a_r) \in \Z^r$,
and to explicitly express its coefficients in terms of
the multiple Stieltjes constants. For this purpose we
consider the following formal power series
\begin{equation}\label{reg-multi-zeta}
\sum_{k_1,\ldots,k_r \ge 0}
\frac{(-1)^{k_1+\cdots+k_r}}{k_1! \cdots k_r!}
\gamma_{k_1, \ldots, k_r}^{(a_1,\ldots,a_r)}
(s_1-a_1)^{k_1} \cdots (s_r-a_r)^{k_r}.
\end{equation}
We prove that it converges in a neighbourhood of $(a_1, \ldots, a_r)$
and extends to a meromorphic function in the whole of $\C^r$. We call
this meromorphic function {\it the regularised multiple zeta function around
$(a_1, \ldots, a_r)$} and denote it by
$\z^\reg_{(a_1, \ldots, a_r)} (s_1,\ldots,s_r)$.

Of course, when $(a_1,\ldots,a_r) \in U_r$, \eqref{reg-multi-zeta}
is the Taylor expansion of $\z(s_1,\ldots,s_r)$
at the point $(a_1,\ldots,a_r)$ and hence in this case
\begin{equation}\label{multi-zeta-conv}
\z(s_1,\ldots,s_r)=\z^\reg_{(a_1, \ldots, a_r)} (s_1,\ldots,s_r),
\end{equation}
as meromorphic functions on $\C^r$. But this is no more true in general.
As an example, formula \eqref{zeta-exp} can be restated as
$$
\z(s)= \frac{1}{s-1} + \z^\reg_{(1)}(s). 
$$
In this paper, we extend to all multiple zeta functions and all 
integer points this type of relation between the multiple zeta
functions and their regularised counterparts.

\begin{rmk}\label{rmk-mzf-non-conv}   \rm
There have been numerous research on assigning suitable
values to multiple zeta functions at integer points outside
the domain of convergence, for example see \cite{AET,AT,TO}.
Our approach allows us to understand completely the local
behaviour of these functions near those points, and to
recover such results.
\end{rmk}

This paper is organised according to the increasing level of difficulty :
we treat the case when $(a_1,\ldots,a_r)= (1,\ldots,1)$ in Section \ref{boundary-1}
(see \thmref{thm-multi-zeta-exp}). In Section \ref{pos-boundary}, we consider
a more general case of boundary points of $U_r$ of a particular form (see
\thmref{thm-gen-multi-zeta-exp}). For instance, \thmref{thm-gen-multi-zeta-exp} is
applicable for boundary points of $U_r$ with positive integral coordinates.
In both Theorems \ref{thm-multi-zeta-exp} and \ref{thm-gen-multi-zeta-exp},
we give explicit expressions of the multiple zeta function
$\z(s_1,\ldots,s_r)$ in terms of the regularised multiple zeta functions
$\z^\reg_{(a_i, \ldots, a_r)} (s_i,\ldots,s_r)$ for $1 \le i \le r$.

In Section \ref{gen-boundary}, we consider the case of general
boundary points of $U_r$ with integral coordinates. In this case,
it appears to be more natural and convenient to express the regularised
multiple zeta function $\z^\reg_{(a_1, \ldots, a_r)} (s_1,\ldots,s_r)$
in terms of $\z(s_i,\ldots,s_r)$ for $1 \le i \le r$
(see \thmref{thm-reg-exp}). We then need an inversion process,
explained in Section \ref{sec-inv}, to get a Laurent type expansion of $\z(s_1,\ldots,s_r)$
around $(a_1,\ldots,a_r)$ and thereby we also recover the previous results
(see \thmref{thm-reg-exp-inverse}).

Finally, in Section \ref{gen-point}, we extend these results to all integer points
$(a_1,\ldots,a_r) \in \Z^r$ (see \thmref{thm-gen-reg-exp}).

Each of the Theorems \ref{thm-multi-zeta-exp}, \ref{thm-gen-multi-zeta-exp},
\ref{thm-reg-exp} and \ref{thm-gen-reg-exp} of course implies the 
preceding ones. But since their formulation varies and also the proofs get
more involved and require more machinery as we go along, we have included
independent proofs to keep our exposition reader friendly.

\section{Multiple Stieltjes constants}\label{msc}

In this section, we prove the existence of the asymptotic expansions of the 
type described in \rmkref{rmk-msc} and from this we deduce the definition
of the multiple Stieltjes constants.
To do this we use the language of asymptotic expansions of sequences of
complex numbers relative to a comparison scale, in the sense of
Bourbaki \cite[Chap V, \S2]{NB}.

The set $\cE$ of sequences 
$$
\left((\log n)^l n^{-m} \right)_{n\ge 1}
$$
where $l \in \N$ and $m \in \Z$, is a comparison scale on the set of
natural numbers $\N$, filtered by the Frechet filter (see \cite[Chap V, \S2, Def. 1]{NB}).
We say that a sequence of complex numbers $(u_n)_{n \in \N}$ has
an {\it asymptotic expansion to arbitrary precision relative to $\cE$}
if it has an asymptotic expansion to precision $n^{-A}$ for any integer $A$
(see \cite[Chap V, \S2, Def. 2]{NB}). This means that
there exists a formal Laurent series
$F=\sum_{l \in \N, m \in \Z} \lambda_{(l,m)} L^l X^m \in \C[L]((X))$
in the indeterminate $X$, with coefficients in the polynomial ring $\C[L]$,
such that for any integer $A$, we have
$$
u_n - \sum_{l \ge 0, m \le A}
\lambda_{(l,m)} (\log n)^l n^{-m} = o(n^{-A}),
$$
as $n \to \infty$.

\begin{defn}
When the above condition is satisfied, the Laurent series $F$ is unique
and we call it {\rm the formal asymptotic expansion of the sequence
$(u_n)_{n \in \N}$ (relative to $\cE$)}.
The constant term $\lambda_{(0,0)}$ is then called {\rm the regularised
value of the sequence $(u_n)_{n \in \N}$ (relative to $\cE$)}.
\end{defn}

\begin{rmk}\label{rmk-order-1} \rm
Note that, by definition of the ring $\C[L]((X))$, the coefficients of
$F$ have the following properties : there exists $m_0 \in \Z$ such that
$\lambda_{(l,m)}=0$ if $m < m_0$, and for any $m \in \Z$,
the set of integers $l \in \N$ such that $\lambda_{(l,m)} \neq 0$, is finite.
When $F=\sum_{m \in \Z}F_m(L)X^m \neq 0$, the smallest $m$ for which $F_m \neq 0$
is denoted by $\ord_X(F)$ and called {\it the order of $F$}. We then have
$u_n=O((\log n)^l n^{-m})$ where $m=\ord_X(F)$ and $l=\deg(F_m)$.
For $F=0$, we define $\ord_X(F)$ to be $\infty$.
\end{rmk}

If two sequences differ by only finitely many terms and one of them has
an asymptotic expansion to arbitrary precision relative to $\cE$,
then the other one also has such an expansion and their formal asymptotic
expansions are the same. This observation allows us to extend Definition 1 to
sequences $(u_n)$ which are only defined for $n$ large enough.

The set $\cS$ of sequences of complex numbers which have an asymptotic expansion
to arbitrary precision relative to $\cE$ is a unitary subalgebra of $\C^\N$
and the map which associates to such a sequence its formal
asymptotic expansion, is a $\C$-algebra homomorphism from $\cS$ to $\C[L]((X))$.

\begin{prop}\label{prop-partial-sum-asymp}
Let $(v_n)_{n \in \N}$ be a sequence of complex numbers which has an
asymptotic expansion to arbitrary precision relative to $\cE$. Then the sequence 
$(u_n)_{n \in \N}$ defined by $u_n  := \sum_{m=0}^{n-1} v_m$ also has
such an expansion.
\end{prop}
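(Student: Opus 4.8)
The plan is to reduce the statement to the single most basic case of summation by parts, namely the computation of $\sum_{m=0}^{n-1} m^{-a}$ and $\sum_{m=0}^{n-1} (\log m)^l m^{-a}$ for fixed $a \in \Z$ and $l \in \N$, and then to leverage linearity and the algebra structure described just before the proposition. Since the map sending a sequence in $\cS$ to its formal asymptotic expansion is a $\C$-algebra homomorphism, and since every element of the comparison scale $\cE$ is a product $(\log n)^l n^{-m}$, it suffices to prove the proposition when $v_n = (\log n)^l n^{-m}$ for a single pair $(l,m)$; the general case then follows by taking (possibly infinite, but in each fixed $X$-degree finite, by \rmkref{rmk-order-1}) linear combinations, after checking that the error terms can be controlled uniformly enough to be summed — this is where one uses that $F_m = 0$ for $m < m_0$ and the $o(n^{-A})$ tail estimate for $v_n$ itself.

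First I would handle $v_n = (\log n)^l n^{-m}$ with $m \le 0$, i.e. $v_n = (\log n)^l n^{|m|}$: here $u_n = \sum_{k=0}^{n-1}(\log k)^l k^{|m|}$, and the Euler–Maclaurin formula gives an asymptotic expansion $u_n \sim \int_1^n (\log t)^l t^{|m|}\,dt + \tfrac12 v_n + \cdots$, each term of which is of the form (polynomial in $\log n$)$\cdot n^{j}$ for integers $j \le |m|+1$, hence lies in the span of $\cE$; the Euler–Maclaurin remainder is $O((\log n)^l n^{|m|-1})$ and can be pushed to arbitrary precision by taking enough Bernoulli terms. Second, for $m = 1$, $v_n = (\log n)^l n^{-1}$, the partial sum is $\tfrac{(\log n)^{l+1}}{l+1} + \gamma_l + o(1)$ plus lower-order terms from Euler–Maclaurin, so again it has an expansion relative to $\cE$ (this is precisely the classical Stieltjes constant computation, re-derived). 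Third, for $m \ge 2$, $v_n = (\log n)^l n^{-m}$ is summable, so $u_n \to \sum_{k\ge 1}(\log k)^l k^{-m}$, and writing $u_n = (\text{limit}) - \sum_{k \ge n}(\log k)^l k^{-m}$, the tail again has an Euler–Maclaurin expansion in powers $n^{-j}(\log n)^i$ with $j \ge m-1 \ge 1$. In all three cases the conclusion is that $u_n \in \cS$.

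To assemble the general case, write $v_n = \sum_{m \ge m_0} v_n^{(m)}$ where $v_n^{(m)} = \big(\sum_{l}\lambda_{(l,m)}(\log n)^l\big) n^{-m}$ is the "degree-$m$ piece"; fix a target precision $A$ and split $v_n = \sum_{m_0 \le m \le A} v_n^{(m)} + w_n$ with $w_n = o(n^{-A})$. The partial sums of the finitely many pieces $v^{(m)}$ each have an asymptotic expansion by the single-term case above, so their sum does too; it remains to show $\sum_{k=0}^{n-1} w_k$ is absorbed into the error. For $m_0 \ge 1$ one has $w_n = o(n^{-A})$ with $A \ge 1$, so $\sum_{k\ge 1} w_k$ converges and $\sum_{k=0}^{n-1} w_k$ differs from this constant by $\sum_{k \ge n} w_k = o(n^{1-A})$ — not quite $o(n^{-A})$, so one instead chooses the splitting at level $A+1$ from the start, which is harmless since $A$ was arbitrary. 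For $m_0 \le 0$ the divergent low-order part of $v$ already contributes genuine positive powers of $n$ to $u_n$, but those come entirely from the finitely many pieces $v^{(m)}$ with $m \le 0$, which are handled exactly by the first case above; the tail $w_n$ is still $o(n^{-A})$ and treated as before.

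The main obstacle I anticipate is purely bookkeeping rather than conceptual: making the uniformity in the splitting precise, i.e. verifying that the error introduced by truncating $v_n$ at degree $A$ (or $A+1$) and summing, together with the Euler–Maclaurin remainders from each retained piece, genuinely combines to $o(n^{-A})$, and that the resulting Laurent series $F'$ (the expansion of $u_n$) is independent of $A$ — which follows from uniqueness in Definition 1, but needs the pieces to be consistent across different choices of $A$. A clean way to organise this is to define the formal "antiderivative" operator on $\C[L]((X))$ directly — sending $L^l X^m \mapsto$ (the formal expansion of $\sum_{k<n} (\log k)^l k^{-m}$, computed once via Euler–Maclaurin) — check it is well-defined and continuous for the $X$-adic topology, and then observe that summation by parts is continuous on $\cS$ so commutes with the $X$-adic limit; this dispatches the infinite-sum issue in one stroke.
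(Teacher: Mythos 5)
Your proposal is correct and follows essentially the same route as the paper: reduce to the individual scale terms $(\log n)^l n^{-m}$ (handled by Euler--Maclaurin, using that primitives of $(\log t)^l t^{-m}$ stay within the span of the scale) plus a remainder, and crucially expand $v_n$ one degree deeper, to precision $n^{-A-1}$, so that the absolutely convergent remainder contributes only a constant plus $o(n^{-A})$ to the partial sums. Your three-way case split on $m$ and the closing remarks about a formal antiderivative operator are organisational elaborations of the same argument, not a different proof.
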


\begin{proof}
It is enough to show that the sequence $(u_n)_{n \in \N}$ has an
asymptotic expansion to precision $n^{-A}$ relative to $\cE$,
for any integer $A \ge 1$. By the hypothesis, the sequence $(v_n)_{n \in \N}$
has an asymptotic expansion
$$
v_n = \sum_{l \ge 0, m \le A+1}
\lambda_{(l,m)} (\log n)^l n^{-m} + o(n^{-A-1})
$$
to precision $n^{-A-1}$ relative to $\cE$, as $n \to \infty$.
Hence it is enough to prove \propref{prop-partial-sum-asymp}
in the following two cases :\\
a) when $v_n=(\log n)^l n^{-m}$ for $n \ge 1$, with $l \in \N,m \in \Z$,\\
b) when $v_n=o(n^{-A-1})$ as $n \to \infty$.

Note that derivatives and primitives of the functions on $(1,\infty)$
of the form $f_{(l,m)}(t)=(\log t)^l t^{-m}$, for $l \in \N$ and $m \in \Z$,
are finite $\Q$-linear combinations of functions of the same form.
Hence, Euler-Maclaurin summation formula yields asymptotic expansions
of $(u_n)_{n \in \N}$ to arbitrary precision in case a).

Next note that if $v_n=o(n^{-A-1})$ with $A \ge 1$,
the series $\sum_{m=0}^{\infty} v_m$ is absolutely convergent, and
if $s$ denotes its sum, then $u_n = s + o(n^{-A})$.
This completes the proof of \propref{prop-partial-sum-asymp}.
\end{proof}

\begin{rmk}\label{rmk-order-2} \rm
The proof also yields the following result : if $a$ is the order of
the formal asymptotic expansion of the sequence $(v_n)$, then the order
of the formal asymptotic expansion of the sequence $(u_n)$ is at least
$\min(0,a-1)$.
\end{rmk}

\begin{thm}\label{thm-msc}
For any $(a_1,\ldots,a_r) \in \Z^r$ and any $(k_1,\ldots,k_r) \in \N^r$,
the sequence $(u_N)_{N \ge 1}$ defined by
$$
u_N  :=\sum_{N > n_1 > \cdots >n_r>0} \frac{\log^{k_1}n_1 \cdots \log^{k_r} n_r}
{n_1^{a_1} \cdots n_r^{a_r}}
$$
has an asymptotic expansion to arbitrary precision relative to $\cE$.
\end{thm}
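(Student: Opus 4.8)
The plan is to prove \thmref{thm-msc} by induction on the depth $r$, using \propref{prop-partial-sum-asymp} as the engine that produces the asymptotic expansion at each step. For $r=0$ the sequence is constant (equal to $1$) and the statement is trivial; for $r=1$ the sequence is $u_N = \sum_{0<n_1<N} (\log n_1)^{k_1} n_1^{-a_1}$, which is exactly the partial sum of the sequence $v_n = (\log n)^{k_1} n^{-a_1}$, and this sequence $(v_n)$ has a (one-term) asymptotic expansion relative to $\cE$, so \propref{prop-partial-sum-asymp} applies directly and gives the classical Stieltjes-constant expansion.

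For the inductive step, I would fix $r \ge 2$ and write the innermost summation last: group the terms of $u_N$ according to the value of $n_1$. Writing $m = n_1$, we have
$$
u_N = \sum_{0 < m < N} \frac{(\log m)^{k_1}}{m^{a_1}} \left( \sum_{0 < n_2 < \cdots < n_r < m} \frac{(\log n_2)^{k_2} \cdots (\log n_r)^{k_r}}{n_2^{a_2} \cdots n_r^{a_r}} \right).
$$
By the induction hypothesis applied to the depth-$(r-1)$ tuple $(a_2,\ldots,a_r)$ and $(k_2,\ldots,k_r)$, the inner sum — call it $w_m$ — has an asymptotic expansion to arbitrary precision relative to $\cE$. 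Hence $u_N$ is the partial sum $\sum_{0<m<N} v_m$ of the sequence $v_m := (\log m)^{k_1} m^{-a_1} w_m$. Since $\cS$ is a subalgebra of $\C^\N$ and contains the sequence $((\log m)^{k_1} m^{-a_1})_m$, the product $v_m$ also lies in $\cS$, i.e. it has an asymptotic expansion to arbitrary precision relative to $\cE$. Then \propref{prop-partial-sum-asymp} shows that $(u_N)$ has such an expansion, completing the induction.

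The only delicate point is bookkeeping about \emph{which} sequence one feeds to \propref{prop-partial-sum-asymp}: the proposition is stated for $u_n = \sum_{m=0}^{n-1} v_m$, and here the inner sum $w_m$ is only defined (or only nonzero) for $m$ large enough — indeed $w_m = 0$ for $m \le r-1$ — but the remark following \propref{prop-partial-sum-asymp}'s proof (that altering finitely many terms does not change membership in $\cS$ or the formal expansion) takes care of this. One should also note that the comparison scale $\cE$ is exactly closed under the operations used: multiplying a monomial $(\log n)^l n^{-m}$ by $(\log m)^{k_1} m^{-a_1}$ stays within the $\Q[L]$-span of $\cE$, so the algebra structure on $\cS$ and on $\C[L]((X))$ is respected, and the homomorphism property guarantees the product of two expansions is the expansion of the product. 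I expect no genuine obstacle here beyond this indexing care; the substance of the argument is entirely carried by \propref{prop-partial-sum-asymp} and the algebra structure of $\cS$.
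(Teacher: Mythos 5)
Your proof is correct and takes essentially the same route as the paper: the paper likewise inducts on $r$, writes $u_N=\sum_{n<N}v_nw_n$ with $w_n=(\log n)^{k_1}n^{-a_1}$ and $v_n$ the inner depth-$(r-1)$ truncated sum, and concludes via the algebra structure of $\cS$ together with \propref{prop-partial-sum-asymp}. The only blemish is a notational slip in your displayed inner sum, which should be over the descending chain $m>n_2>\cdots>n_r>0$ so that the exponents $(k_2,a_2),\ldots,(k_r,a_r)$ stay attached to the correct variables; the argument itself is unaffected.
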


\begin{proof}
We prove this theorem by induction on $r$. It is clear for $r=0$.
Now we assume $r \ge 1$. Let $(v_n)_{n \ge 1}$ denote the sequence
defined by
$$
v_n=\sum_{n > n_2 > \cdots >n_r>0} \frac{\log^{k_2}n_2 \cdots \log^{k_r} n_r}
{n_2^{a_2} \cdots n_r^{a_r}}
$$
and $(w_n)_{n \ge 1}$ denote the sequence defined by
$w_n = \frac{\log^{k_1}n}{n^{a_1}}$. By definition of $\cE$,
$(w_n)_{n \ge 1}$ has an asymptotic expansion to arbitrary precision
relative to $\cE$. The sequence $(v_n)_{n \ge 1}$ also has such
an expansion by the induction hypothesis. Since $u_N=\sum_{n <N} v_n w_n$,
we get that $(u_N)_{N \ge 1}$ has such an expansion by \propref{prop-partial-sum-asymp}.
\end{proof}

\begin{rmk}\label{rmk-order-3} \rm
Using \rmkref{rmk-order-2}, we get that the order of the formal asymptotic
expansion relative to $\cE$ of the sequence $(u_N)$ considered in \thmref{thm-msc}
is at least $\min(0,a_1-1,\ldots,a_1+\cdots+a_r-r)$. In particular, when
$(a_1,\ldots,a_r)$ belongs to the closure $\overline{U_r}$ of $U_r$, this order
is non-negative, and therefore there exists a polynomial $P \in \C[L]$ such that
$u_N=P(\log N)+o(1)$ as $n$ tends to $\infty$.

\end{rmk}

\begin{defn}\label{defn-msc}
For any $(a_1,\ldots,a_r) \in \Z^r$ and any $(k_1,\ldots,k_r) \in \N^r$,
the regularised value of the sequence $(u_N)_{N \in \N}$ where
$$
u_N  :=\sum_{N > n_1 > \cdots >n_r>0} \frac{\log^{k_1}n_1 \cdots \log^{k_r} n_r}
{n_1^{a_1} \cdots n_r^{a_r}},
$$
is denoted by $\gamma_{k_1, \ldots, k_r}^{(a_1,\ldots,a_r)}$ and called
{\rm the multiple Stieltjes constant of order $(k_1, \ldots, k_r)$
at the point $(a_1,\ldots,a_r)$}.
\end{defn}

\section{Behaviour of the multiple zeta functions around $(1,\ldots,1)$}\label{boundary-1}

In this special case the multiple Stieltjes constants
$\gamma_{k_1, \ldots, k_r}^{(1,\ldots,1)}$ are simply denoted by $\gamma_{k_1,\ldots,k_r}$.

\begin{thm}\label{thm-multi-zeta-exp}
Let $r \ge 0$ be an integer. The power series
\begin{equation}\label{reg-multi-zeta-1}
\sum_{k_1,\ldots,k_r \ge 0}
\frac{(-1)^{k_1+\cdots+k_r}}{k_1! \cdots k_r!}
\gamma_{k_1, \ldots, k_r} (s_1-1)^{k_1} \cdots (s_r-1)^{k_r}
\end{equation}
converges in a neighbourhood of the point $(1, \ldots, 1)$ of $\C^r$.
It extends to a meromorphic function on $\C^r$, denoted by
$\z^\reg_{(1, \ldots, 1)} (s_1,\ldots,s_r)$ and we have the following
equality between meromorphic functions on $\C^r :$
\begin{equation}\label{multi-zeta-exp}
\z(s_1,\ldots,s_r) = \sum_{i=0}^r
\frac{\z^\reg_{(1, \ldots, 1)} (s_{i+1},\ldots,s_r)}{(s_1-1) \cdots (s_1+\cdots+s_i-i)}.
\end{equation}
\end{thm}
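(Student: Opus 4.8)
The plan is to prove \eqref{multi-zeta-exp} by induction on $r$, using the recursive structure visible in the definition of $\z(s_1,\ldots,s_r)$: summing first over $n_1$ and grouping by the value $N = n_1$ peels off one layer. For $r=0$ both sides equal $1$, and for $r=1$ the identity reduces to Stieltjes's classical expansion \eqref{zeta-exp}, restated as $\z(s) = \frac{1}{s-1} + \z^\reg_{(1)}(s)$, which also establishes convergence of \eqref{reg-multi-zeta-1} near $s_1=1$ in depth one and its meromorphic continuation to $\C$. For the inductive step I would first deal with the analytic issue: that \eqref{reg-multi-zeta-1} converges in a neighbourhood of $(1,\ldots,1)$. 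This follows once we know \eqref{multi-zeta-exp} holds as an identity of \emph{formal} power series together with the fact that the right-hand side, expanded termwise, has a pole structure controlled purely by the explicit denominator $(s_1-1)\cdots(s_1+\cdots+s_i-i)$; but more directly one can bound the multiple Stieltjes constants $\gamma_{k_1,\ldots,k_r}$ using Remark~\ref{rmk-msc} and Cauchy-type estimates on the coefficients $F_n(L)$, exactly as in the depth-one case, so the series \eqref{reg-multi-zeta-1} has a genuine positive radius of convergence. I would present this as a lemma invoked at the start.

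The heart of the argument is the identity itself. Fix $(s_1,\ldots,s_r)$ in the region $U_r$ of absolute convergence, so that $\z(s_1,\ldots,s_r)$ is literally the series \eqref{zeta}. Writing $n_1 = N$ and separating the innermost sum, one has
\begin{equation*}
\z(s_1,\ldots,s_r) = \sum_{N \ge 1} N^{-s_1} \Big(\sum_{N > n_2 > \cdots > n_r > 0} n_2^{-s_2}\cdots n_r^{-s_r}\Big)
= \sum_{N \ge 1} N^{-s_1}\, \z^{<N}(s_2,\ldots,s_r),
\end{equation*}
where $\z^{<N}$ denotes the truncated depth-$(r-1)$ series. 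By Theorem~\ref{thm-msc} and Remark~\ref{rmk-msc}, $\z^{<N}(s_2,\ldots,s_r)$ has an asymptotic expansion $\sum_{n} G_n(\log N) N^{-n} + o(N^{-A})$ whose constant term (the $n=0$ part) is precisely $\z^\reg_{(1,\ldots,1)}(s_2,\ldots,s_r)$ plus a polynomial in $\log N$ coming from the other coordinates of $\overline{U_r}$ — here I would use Remark~\ref{rmk-order-3} to know the expansion starts at order $n=0$. The strategy is then to substitute this expansion into the $N$-sum and perform an Abel/partial-summation manipulation: the part of $\z^{<N}$ that is a polynomial $Q(\log N)$ in $\log N$ gets multiplied by $N^{-s_1}$ and summed, producing — via the depth-one regularisation — a term $\frac{1}{s_1-1}$ times (the corresponding regularised depth-$(r-1)$ object) together with a holomorphic remainder; iterating the $\log N$-powers reproduces the full chain of denominators $(s_1-1)\cdots(s_1+\cdots+s_i-i)$ after one invokes the inductive hypothesis for $\z^\reg_{(1,\ldots,1)}(s_{i+1},\ldots,s_r)$. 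The decaying part $\sum_{n\ge 1} G_n(\log N)N^{-n}\cdot N^{-s_1}$ contributes a function holomorphic on a larger region, which by the same bookkeeping is exactly $\z^\reg_{(1,\ldots,1)}(s_1,\ldots,s_r)$ — the $i=0$ term — once one recognises that the coefficients $G_n$ repackage into the multiple Stieltjes constants $\gamma_{k_1,\ldots,k_r}$ by the very Definition~\ref{defn-msc}. Finally, since all pieces have been shown to be meromorphic on $\C^r$ and the identity holds on the open set $U_r$, it holds everywhere by analytic continuation.

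The main obstacle, and the step requiring the most care, is the bookkeeping that matches the $\log N$-powers appearing in the asymptotic expansion of $\z^{<N}(s_2,\ldots,s_r)$ against the telescoping product of linear forms $(s_1-1), (s_1+s_2-2), \ldots, (s_1+\cdots+s_i-i)$ in the denominator. Concretely, one needs the algebraic fact that applying the ``$\sum_N N^{-s_1}(\,\cdot\,)$ with regularised constant term'' operation to $(\log N)^j$ produces, up to holomorphic error, a combination of $1/(s_1-1)^{j+1}$ down to $1/(s_1-1)$ with coefficients that are differentiated Stieltjes data — and then that composing this with the inductive formula for $\z^\reg_{(1,\ldots,1)}$ collapses the powers $1/(s_1-1)^{j+1}$ into the shifted products $1/\big((s_1-1)(s_1+s_2-2)\cdots\big)$. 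I expect this to be a finite but delicate computation, best organised by introducing a generating-function identity in the $(s_i - 1)$ variables so that the rearrangement becomes a manipulation of the single formal series \eqref{reg-multi-zeta} rather than a coefficient-by-coefficient check; the uniqueness statements in Remark~\ref{rmk-msc} and the $\C$-algebra homomorphism property of ``take the formal asymptotic expansion'' are what make such a generating-function argument legitimate.
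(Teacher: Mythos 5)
Your overall skeleton (induction on $r$, the case $r=1$ being Stieltjes's formula, analytic continuation at the end) matches the paper, but the two load-bearing steps are not carried out, and one of them rests on an argument that does not work as stated. First, the convergence of the power series \eqref{reg-multi-zeta-1}: you propose to bound the constants $\gamma_{k_1,\ldots,k_r}$ by ``Cauchy-type estimates on the coefficients $F_n(L)$''. But Theorem~\ref{thm-msc} and Remark~\ref{rmk-msc} control, for each \emph{fixed} multi-index $(k_1,\ldots,k_r)$, the behaviour in $N$ of the truncated sum; they say nothing about how $\gamma_{k_1,\ldots,k_r}$ grows as the $k_i$ vary, and there is no holomorphic function in hand whose Taylor coefficients these numbers are known to be --- that is precisely the assertion being proved --- so no Cauchy estimate is available. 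The paper obtains convergence as an \emph{output} rather than an input: it builds an explicit sequence $u_N$ of holomorphic functions on a polydisc $D$ that converges uniformly (using the translation identity \eqref{first-step} summed over $n_1\ge N$, together with Lemma~\ref{lem-negligible} to discard the $k\ge 1$ tail), and then Lemma~\ref{lem-conv} forces the Taylor coefficients of $u_N$ --- which are $P_{k_1,\ldots,k_r}(\log N)+o(1)$ by Theorem~\ref{thm-msc} --- to be constant and to assemble into a convergent series. The upshot is that $(s_1-1)v$ converges on $D$, hence so does $v$. This mechanism is absent from your proposal and cannot be replaced by a direct estimate.

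Second, the identity itself. Your decomposition $\z(s_1,\ldots,s_r)=\sum_N N^{-s_1}\z(s_2,\ldots,s_r)_{<N}$ requires substituting the full asymptotic expansion of $\z(s_2,\ldots,s_r)_{<N}$ \emph{as a function of $(s_2,\ldots,s_r)$}, including every coefficient of every power of $\log N$, and then tracking how $\sum_N N^{-s_1}(\log N)^j$ produces poles of order $j+1$ at $s_1=1$ which must recombine into the off-diagonal products $(s_1-1)(s_1+s_2-2)\cdots(s_1+\cdots+s_i-i)$. The induction hypothesis (Theorem~\ref{thm-multi-zeta-exp} in depth $r-1$) supplies only the constant term of that expansion, not the higher $\log N$ coefficients as identified holomorphic functions, so the ``delicate bookkeeping'' you defer is not a finite verification but the actual content of the theorem; making it rigorous would require the germ-level asymptotic machinery the paper only develops in Section~\ref{asymp-exp} for Theorem~\ref{thm-gen-reg-exp}. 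The paper's route avoids this entirely: the translation formula \eqref{second-step} relates $(s_1-1)\z(s_1,\ldots,s_r)$ to $\z(s_1+s_2-1,s_3,\ldots,s_r)$, i.e.\ it peels off exactly one linear factor per inductive step and hands the remaining product of denominators to the depth-$(r-1)$ statement evaluated at the shifted point $(s_1+s_2-1,s_3,\ldots,s_r)$, so that no powers of $\log N$ beyond those absorbed by Lemma~\ref{lem-conv} ever appear. You would need either to adopt that translation identity or to substantially strengthen your inductive hypothesis before your argument closes.
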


Note that in \eqref{multi-zeta-exp}, the term of index $0$ in the sum is
$\z^\reg_{(1, \ldots, 1)} (s_1,\ldots,s_r)$ and the term of index $r$ is
$\frac{1}{(s_1-1) \cdots (s_1+\cdots+s_r-r)}$.

\begin{exm}\rm
We have, in a neighbourhood of $(1,1)$, the following Laurent type
expansion of $\z(s_1,s_2)$ : 
\begin{align*}
\z(s_1,s_2)= & \frac{1}{(s_1-1)(s_1+s_2-2)} + \frac{1}{s_1-1} \sum_{k \ge 0}
\frac{(-1)^k \gamma_k}{k!} (s_2-1)^k \\
& + \sum_{k_1,k_2 \ge 0}\frac{(-1)^{k_1+k_2} \gamma_{k_1,k_2}}{k_1! k_2!}
(s_1-1)^{k_1} (s_2-1)^{k_2}.
\end{align*}
\end{exm}

\begin{proof}[Proof of \thmref{thm-multi-zeta-exp}]
We prove this theorem by induction on the depth $r$. When $r=0$, we just have
$\z(\varnothing)=1$ and $\z^\reg_{(\varnothing)}(\varnothing)=1$, hence the 
theorem is true. Next let $r \ge 1$. It is enough to prove that the power
series \eqref{reg-multi-zeta-1} converges in a neighbourhood of $(1,\ldots,1)$
to a function satisfying \eqref{multi-zeta-exp} in this neighbourhood.
The meromorphic continuation will then follow from the induction
hypothesis. To do this we use the following general lemma.

\begin{lem}\label{lem-conv}
Let ${\bf a}=(a_1,\ldots,a_r)$ be a point in $\C^r$ and $D$ denote a polydisc around
${\bf a}$. Let $(u_N)$ be a sequence of holomorphic
functions on $D$ which converges uniformly on $D$ to a function
$u$. Assume that for all integers $k_1,\ldots,k_r \ge 0$, the $(k_1,\ldots,k_r)$-th 
coefficient of the Taylor expansion of $u_N$ at ${\bf a}$ has the 
form $P_{k_1,\ldots,k_r}(N,\log N) + o(1)$ as $N \to \infty$,
where $P_{k_1,\ldots,k_r}$ is a polynomial in two indeterminates with complex
coefficients. Then $P_{k_1,\ldots,k_r}$ is a constant polynomial. If this
constant is $\alpha_{k_1,\ldots,k_r}$, then the function $u$, in the polydisc $D$,
is the sum for $(z_1,\ldots,z_r) \in D$ of the convergent power series
$$
\sum_{k_1,\ldots,k_r \ge 0} \alpha_{k_1,\ldots,k_r}(z_1-a_1)^{k_1}\cdots (z_r-a_r)^{k_r}.
$$
\end{lem}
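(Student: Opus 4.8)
The plan is to prove Lemma~\ref{lem-conv} by combining the classical fact that uniform limits of holomorphic functions have Taylor coefficients obtained as limits of the Taylor coefficients, with a rigidity argument forcing the polynomials $P_{k_1,\ldots,k_r}$ to be constant. First I would fix a closed polydisc $\overline{D'} \subset D$ centred at $\mathbf a$, say of polyradius $(\rho,\ldots,\rho)$, and recall that since $u_N \to u$ uniformly on $D$ (hence on $\overline{D'}$), the limit $u$ is holomorphic on $D$; moreover by the Cauchy integral formula for the Taylor coefficients,
$$
c^{(N)}_{k_1,\ldots,k_r} = \frac{1}{(2\pi i)^r} \oint \cdots \oint
\frac{u_N(z_1,\ldots,z_r)}{(z_1-a_1)^{k_1+1}\cdots (z_r-a_r)^{k_r+1}}\, dz_1 \cdots dz_r,
$$
integrating over the distinguished boundary $|z_j - a_j| = \rho$, and the analogous formula holds for the Taylor coefficient $\alpha_{k_1,\ldots,k_r}$ of $u$. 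Uniform convergence lets us pass to the limit under the integral sign, so $c^{(N)}_{k_1,\ldots,k_r} \to \alpha_{k_1,\ldots,k_r}$ as $N \to \infty$.

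Next I would invoke the hypothesis: $c^{(N)}_{k_1,\ldots,k_r} = P_{k_1,\ldots,k_r}(N,\log N) + o(1)$. Since this sequence converges (to $\alpha_{k_1,\ldots,k_r}$), the sequence $\bigl(P_{k_1,\ldots,k_r}(N,\log N)\bigr)_{N}$ must itself converge. The key elementary fact is that if $P \in \C[x,y]$ and $P(N,\log N)$ converges as $N \to \infty$, then $P$ is a constant polynomial, and that constant equals the limit. This follows because the functions $N \mapsto N^a (\log N)^b$ for distinct pairs $(a,b)$ with $a \ge 0$ lie in distinct asymptotic classes — indeed they form (part of) a comparison scale, exactly in the spirit of the Bourbaki framework already set up in Section~\ref{msc} — so a finite nonzero $\Q$- or $\C$-linear combination of them with a nonconstant term is unbounded or tends to a nonzero multiple of such a term, hence cannot converge to a finite limit unless all coefficients except possibly that of $N^0(\log N)^0$ vanish. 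Therefore $P_{k_1,\ldots,k_r}$ is the constant polynomial $\alpha_{k_1,\ldots,k_r}$.

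Finally, having identified $\alpha_{k_1,\ldots,k_r}$ as the $(k_1,\ldots,k_r)$-th Taylor coefficient of the holomorphic function $u$ at $\mathbf a$, I would conclude that $u$ equals the sum of its Taylor series $\sum_{k_1,\ldots,k_r \ge 0} \alpha_{k_1,\ldots,k_r}(z_1-a_1)^{k_1}\cdots(z_r-a_r)^{k_r}$ on any polydisc contained in $D$, which is the assertion of the lemma; in particular this power series converges on $D$ (or at least on any polydisc around $\mathbf a$ inside $D$, which suffices for the application). The main obstacle — though a mild one — is the asymptotic independence statement for the functions $N^a(\log N)^b$; I would either cite the comparison-scale machinery of \cite[Chap V, \S2]{NB} already in use, or give a two-line direct argument by dividing by the dominant monomial. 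The analytic part (Cauchy estimates and interchange of limit and integral) is entirely standard and I would treat it briefly.
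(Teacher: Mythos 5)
Your proof is correct and follows essentially the same route as the paper's: uniform convergence gives holomorphy of $u$ and convergence of the Taylor coefficients of $u_N$ to those of $u$ (via the Cauchy integral formula), and then the convergence of $P_{k_1,\ldots,k_r}(N,\log N)$ forces the polynomial to be constant. The only difference is that you spell out the justification for that last step (the asymptotic independence of the monomials $N^a(\log N)^b$), which the paper leaves implicit.
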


\begin{proof}[Proof of \lemref{lem-conv}]
Since $(u_N)$ converges uniformly to $u$, $u$ is holomorphic in $D$ and
hence given by its Taylor expansion at ${\bf a}$. Furthermore, for any fixed
integers $k_1,\ldots,k_r \ge 0$, the sequence of $(k_1,\ldots,k_r)$-th coefficient
of the Taylor expansion of $(u_N)$ at ${\bf a}$ converges to 
$(k_1,\ldots,k_r)$ coefficient of the Taylor expansion of $u$ at that
point. Since the $(k_1,\ldots,k_r)$-th 
coefficient of the Taylor expansion of $u_N$ at ${\bf a}$ has the 
form $P_{k_1,\ldots,k_r}(N,\log N) + o(1)$ as $N \to \infty$,
$P_{k_1,\ldots,k_r}$ must be a constant polynomial. This
constant is then nothing but the $(k_1,\ldots,k_r)$-th coefficient of the Taylor
expansion of $u$ at the point ${\bf a}$. This completes the proof
of the lemma.
\end{proof}

Now we start with the following series expansion which is valid
for any integer $n_1 \ge 2$ and complex number $s_1$ :
\begin{equation}\label{first-step}
n_1^{1-s_1} - (n_1+1)^{1-s_1}= \sum_{k \ge 0} (-1)^k
\frac{(s_1-1)_{k+1}}{(k+1)!} \ n_1^{-s_1-k},
\end{equation}
where for $s \in \C$ and $k \ge 0$,
$$
(s)_k  := s (s+1) \cdots (s+k-1).
$$
For any $(s_1,\ldots,s_r) \in \C^r$ and any integer $N \ge 1$, let us define
\begin{equation}\label{zeta-initial}
\z(s_1,\ldots,s_r)_{<N} := \sum_{N > n_1 > \cdots >n_r>0} n_1^{-s_1} \cdots n_r^{-s_r}.
\end{equation}
Let $\xi_N$ denote the meromorphic function $\z(s_1,\ldots,s_r) - \z(s_1,\ldots,s_r)_{<N}$
on $\C^r$, which on $U_r$ is given by the absolutely convergent series
\begin{equation}\label{zeta-rest}
\xi_N(s_1,\ldots,s_r) =\sum_{n_1 > \cdots >n_r>0, n_1 \ge N} n_1^{-s_1} \cdots n_r^{-s_r}.
\end{equation}
When we multiply both sides of \eqref{first-step} by $n_2^{-s_2} \cdots n_r^{-s_r}$
and sum for $n_1 > \cdots >n_r>0$ with $n_1 \ge N \ge 2$ and $(s_1,\ldots,s_r) \in U_r$, we get
\begin{equation}\label{second-step}
\begin{split}
& N^{1-s_1}\z(s_2,\ldots,s_r)_{<N}
+ \xi_N(s_1+s_2-1, s_3,\ldots,s_r) \\
& =\sum_{k\ge 0}(-1)^k \frac{(s_1-1)_{k+1}}{(k+1)!} \xi_N(s_1+k,s_2,\ldots,s_r).
\end{split}
\end{equation}
The interchange of summations on the right hand side is justified as the family
$$
\left((-1)^k \frac{(s_1-1)_{k+1}}{(k+1)!}  n_1^{-s_1-k} n_2^{-s_2} \cdots n_r^{-s_r}
\right)_{n_1 > \cdots >n_r>0 \atop n_1 \ge N \ge 2; k \ge 0}
$$
is normally summable on any compact subset of $U_r$ (see \cite[Proposition 2]{MSV}).
We now prove the following general lemma.
For a real number $x$ and $(a_1,\ldots,a_r)\in \C^r$, let $\tau_x((a_1,\ldots,a_r))$
denote the point $(a_1+x,a_2,\ldots,a_r)\in \C^r$. For a set $X$ and a complex
valued bounded function $f :X \to \C$, we denote $\|f\|_X := \sup_{x \in X} |f(x)|$.

\begin{lem}\label{lem-negligible}
Let $K$ be a compact subset of $\C^r$ and $A$ be a non-negative integer.
Suppose $k_0$ is the smallest non-negative integer such that
$\tau_{k_0}(K) \subset \tau_{A}(U_r)$. Then the family
$$
\left(\|(-1)^k \frac{(s_1-1)_{k+1}}{(k+1)!}  n_1^{-s_1-k} n_2^{-s_2} \cdots n_r^{-s_r}\|_K
\right)_{n_1 > \cdots >n_r>0 \atop n_1 \ge N \ge 2; k \ge k_0}
$$
is summable and its sum is $o(N^{-A})$ as $N$ tends to $\infty$.
\end{lem}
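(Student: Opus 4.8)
The plan is to majorise each term of the family by peeling off from $n_1^{-s_1-k}$ a factor $n_1^{-A}$ together with an extra factor $n_1^{-(k-k_0)}$ that supplies geometric decay in $k$, and to recognise what remains as a term of the normally convergent series \eqref{zeta} on a suitable fixed compact subset of $U_r$.

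First I would record the geometric fact underlying everything: since $\tau_{k_0}(K)\subset\tau_A(U_r)$, applying the translation $\tau_{-A}$ gives $\tau_{k_0-A}(K)\subset U_r$, which is a compact subset because $\tau_{k_0-A}$ is continuous and $K$ is compact. By the normal convergence of the series \eqref{zeta} on compact subsets of $U_r$, the quantity
$$
\Psi(N):=\sum_{\substack{n_1\ge N\\ n_1>n_2>\cdots>n_r>0}}\bigl\|\,n_1^{-s_1-k_0+A}n_2^{-s_2}\cdots n_r^{-s_r}\,\bigr\|_K
$$
is then finite for every $N$ (it is at most $\sum_{n_1>\cdots>n_r>0}\bigl\|n_1^{-s_1}\cdots n_r^{-s_r}\bigr\|_{\tau_{k_0-A}(K)}<\infty$), and as the tail of a convergent series of non-negative terms it satisfies $\Psi(N)\to0$ as $N\to\infty$. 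I would also note that $M_k:=\bigl\|(s_1-1)_{k+1}/(k+1)!\bigr\|_K$ grows at most polynomially in $k$: from $(s_1-1)_{k+1}/(k+1)!=(s_1-1)\prod_{j=1}^{k}\tfrac{s_1-1+j}{j+1}$ and $\bigl|\tfrac{s_1-1+j}{j+1}\bigr|\le 1+\tfrac{|s_1-1|}{j+1}$ one obtains $M_k=O(k^{C})$ with $C:=\sup_{s\in K}|s_1-1|$, so that $\sum_{k\ge k_0}M_kN^{-(k-k_0)}$ converges for every $N\ge2$ and, being non-increasing in $N$, is bounded there by a constant $C'$ independent of $N$.

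The estimate itself is then short. For $n_1\ge N\ge2$, $k\ge k_0$ and any $(s_1,\dots,s_r)\in K$, the identity $n_1^{-\Re(s_1)-k}=n_1^{-A-(k-k_0)}\,n_1^{-\Re(s_1)-k_0+A}$ together with $A\ge0$, $k\ge k_0$ and $n_1\ge N$ gives $|n_1^{-s_1-k}|\le N^{-A-(k-k_0)}\,n_1^{-\Re(s_1)-k_0+A}$; hence, by submultiplicativity of $\|\cdot\|_K$, each term of the family is at most $M_k\,N^{-A-(k-k_0)}\,\bigl\|n_1^{-s_1-k_0+A}n_2^{-s_2}\cdots n_r^{-s_r}\bigr\|_K$. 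Since all terms are non-negative I may sum freely over $(n_1,\dots,n_r)$ and $k\ge k_0$, obtaining that the family is summable with total
$$
S_N\ \le\ \sum_{k\ge k_0}M_k\,N^{-A-(k-k_0)}\,\Psi(N)\ =\ N^{-A}\,\Psi(N)\sum_{k\ge k_0}M_k\,N^{-(k-k_0)}\ \le\ C'\,N^{-A}\,\Psi(N),
$$
which is $o(N^{-A})$ because $\Psi(N)\to0$.

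I expect the main obstacle to be conceptual rather than computational: one needs a single majorising scheme that is at once summable in the outer index $k$, summable in the inner multi-index $(n_1,\dots,n_r)$, and of the correct order $o(N^{-A})$ in $N$. The point is that the defining property of $k_0$ is precisely what lets $n_1^{-A}$ be extracted while the residual exponent tuple $(s_1+k_0-A,s_2,\dots,s_r)$ stays inside $U_r$ (so one may invoke normal convergence there), and the additional split $n_1^{-k}=n_1^{-A-(k-k_0)}\cdot n_1^{-(\,\cdot\,)}$ then produces the factor $N^{-(k-k_0)}$ that controls the $k$-summation given only polynomial growth of the binomial coefficients $M_k$. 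The remaining ingredients — the elementary product bound for $M_k$ and the interchange of the non-negative summations — are routine.
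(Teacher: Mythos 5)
Your proof is correct. It follows the same basic strategy as the paper --- peel $N^{-A}$ and a geometric factor $N^{-(k-k_0)}$ off $n_1^{-s_1-k}$, and reduce the sum over $(n_1,\dots,n_r)$ to normal convergence of the series \eqref{zeta} on a compact subset of $U_r$ --- but the mechanism producing the final $o(N^{-A})$ is genuinely different. The paper uses compactness of $K$ to find $\epsilon>0$ with $\tau_{k_0-A-\epsilon}(K)\subset U_r$, extracts $N^{-A-\epsilon}$, and bounds the $(n_1,\dots,n_r)$-sum by an $N$-independent constant; the extra $N^{-\epsilon}$ then gives $o(N^{-A})$ (indeed the stronger $O(N^{-A-\epsilon})$). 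You work directly with $\tau_{k_0-A}(K)\subset U_r$, with no $\epsilon$-margin, and instead get the $o(1)$ factor from $\Psi(N)\to 0$, i.e.\ from the fact that the sum restricted to $n_1\ge N$ is the tail of a convergent series of non-negative terms. Both are valid; your route is slightly cleaner in that it avoids the shrinking argument, at the cost of needing the (easy, and correctly justified) polynomial bound $M_k=O(k^C)$ to guarantee $\sum_{k\ge k_0}M_k\,2^{-(k-k_0)}<\infty$ --- the same convergence fact the paper also uses without comment. All the supporting steps (submultiplicativity of $\|\cdot\|_K$, free interchange of non-negative sums, compactness of $\tau_{k_0-A}(K)$) are in order.
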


\begin{proof}[Proof of \lemref{lem-negligible}]
We have $\tau_{k_0-A}(K) \subset U_r$. Since $K$ is compact, we can in fact find
$\epsilon > 0$ such that $\tau_{k_0-A-\epsilon}(K) \subset U_r$. Then for
$n_1 \ge N$ and $k \ge k_0$,
$$
\|n_1^{-s_1-k} n_2^{-s_2} \cdots n_r^{-s_r}\|_K
\le N^{-A-\epsilon-k+k_0} \|n_1^{-s_1-k_0+A+\epsilon} n_2^{-s_2} \cdots n_r^{-s_r}\|_K.
$$
Since $\tau_{k_0-A-\epsilon}(K) \subset U_r$, the family
$$
(n_1^{-s_1-k_0+A+\epsilon} n_2^{-s_2} \cdots n_r^{-s_r})_{n_1 > \cdots >n_r>0}
$$
is summable. On the other hand, if $M := \|s_1-1\|_K$, we have
$\left\| (-1)^k \frac{(s_1-1)_{k+1}}{(k+1)!} \right\|_K \le \frac{(M)_{k+1}}{(k+1)!}$.
Now for $N \ge 2$, the sum $\sum_{k\ge k_0}\frac{(M)_{k+1}}{(k+1)!}N^{-A-\epsilon-k+k_0}$ 
is summable and it is $o(N^{-A})$ as $N$ tends to $\infty$, since it is bounded
above by the convergent series
$N^{-A-\epsilon} \sum_{k\ge k_0}\frac{(M)_{k+1}}{(k+1)! \ 2^{k-k_0}}$.
This completes the proof of \lemref{lem-negligible}.
\end{proof}

Let $D$ be an open polydisc with center $(1,\ldots,1)$ and polyradius
$(\rho_1,\ldots,\rho_r)$ such that $\rho_1+\cdots+\rho_r<1$.
We deduce from formula \eqref{second-step} and \lemref{lem-negligible}
(for $K=\overline{D},A=0$ and $k_0=1$) that, for $N \ge 2$, the function
$$
N^{1-s_1}\z(s_2,\ldots,s_r)_{<N} + \xi_N(s_1+s_2-1, s_3,\ldots,s_r)-
(s_1-1) \xi_N(s_1,\ldots,s_r)
$$
is holomorphic in $D$, and that it converges
uniformly to $0$ as $N$ tends to $\infty$. This implies that the meromorphic function
\begin{equation}\label{limit-func-1}
u(s_1,\ldots,s_r) :=(s_1-1)\z(s_1,\ldots,s_r) - \z(s_1+s_2-1, s_3,\ldots,s_r)
\end{equation}
is holomorphic in $D$ and the sequence of holomorphic functions $(u_N)_{N \ge 2}$
defined by
\begin{equation}\label{seq-func-1}
\begin{split}
u_N(s_1,\ldots,s_r) := & N^{1-s_1}\z(s_2,\ldots,s_r)_{<N} - \z(s_1+s_2-1, s_3,\ldots,s_r)_{<N}\\
& +(s_1-1) \z(s_1,\ldots,s_r)_{<N}
\end{split}
\end{equation}
converges uniformly to $u$ on $D$.

Let
$$
\sum_{k_1,\ldots,k_r \ge 0} a_{k_1,\ldots,k_r}(N) (s_1-1)^{k_1} \cdots (s_r-1)^{k_r}
$$
be the Taylor expansion of $u_N$ at $(1,\ldots,1)$. We deduce from \thmref{thm-msc}
and \rmkref{rmk-order-3} that $a_{k_1,\ldots,k_r}(N)$ is of the form
$P_{k_1,\ldots,k_r}(\log N) + o(1)$ as $N$ tends to $\infty$, where $P_{k_1,\ldots,k_r}$ is
a polynomial in $\C[L]$.
Hence by \lemref{lem-conv}, $P_{k_1,\ldots,k_r}$ must be a
constant polynomial, which is $a_{k_1,\ldots,k_r}$, where $a_{k_1,\ldots,k_r}$
is the $(k_1,\ldots,k_r)$-th Taylor coefficient of $u$ at $(1,\ldots,1)$.

The number $a_{k_1,\ldots,k_r}$, being the constant term of $P_{k_1,\ldots,k_r}$,
can be directly read from \eqref{seq-func-1} : it is the sum of the $(k_1,\ldots,k_r)$-th
Taylor coefficients of the functions
$$
(s_1,\ldots,s_r) \mapsto \z^\reg_{(1, \ldots, 1)} (s_{2},\ldots,s_r) -
\z^\reg_{(1, \ldots, 1)} (s_{1}+s_2-1,s_3,\ldots,s_r)
$$
and of the formal power series $(s_1-1)v$, where $v$ is the formal power series \eqref{reg-multi-zeta-1}.
This implies that the formal power series $(s_1-1)v$ converges on $D$. Hence $v$ converges
on $D$ and if $\z^\reg_{(1, \ldots, 1)} (s_{1},\ldots,s_r)$ denotes its sum on $D$, then
the function
$$
(s_1-1)\z(s_1,\ldots,s_r) - \z(s_1+s_2-1, s_3,\ldots,s_r)
$$
is equal to
$$
\z^\reg_{(1, \ldots, 1)} (s_{2},\ldots,s_r) -
\z^\reg_{(1, \ldots, 1)} (s_{1}+s_2-1,s_3,\ldots,s_r)
+ (s_1-1)\z^\reg_{(1, \ldots, 1)} (s_{1},\ldots,s_r)
$$
on $D$. By the induction hypothesis we further have the following equality of
meromorphic functions
\begin{align*}
& \z(s_1+s_2-1,s_3,\ldots,s_r) - \z^\reg_{(1, \ldots, 1)} (s_{1}+s_2-1,s_3,\ldots,s_r)\\
& = \sum_{i=2}^r
\frac{\z^\reg_{(1, \ldots, 1)} (s_{i+1},\ldots,s_r)}{(s_1+s_2-2) \cdots (s_1+\cdots+s_i-i)}.
\end{align*}
Thus we get
$$
\z(s_1,\ldots,s_r) = \sum_{i=0}^r
\frac{\z^\reg_{(1, \ldots, 1)} (s_{i+1},\ldots,s_r)}{(s_1-1) \cdots (s_1+\cdots+s_i-i)}
$$
on $D$ and this completes the proof of \thmref{thm-multi-zeta-exp}.
\end{proof}

\begin{rmk}\label{rmk-inversion}\rm
Formula \eqref{multi-zeta-exp} allows us to express the multiple zeta functions
in terms of their regularised counterparts at $(1, \ldots, 1)$. Conversely we can also
deduce from \eqref{multi-zeta-exp} an expression of these regularised multiple zeta
functions  in terms of the multiple zeta functions themselves as follows :
\begin{equation}\label{converse-multi-zeta-exp}
\z^\reg_{(1, \ldots, 1)} (s_1,\ldots,s_r)
= \sum_{i=0}^r
\frac{(-1)^i \z(s_{i+1},\ldots,s_r)}{(s_i-1)(s_i+s_{i-1}-2) \cdots (s_i+\cdots+s_1-i)}.
\end{equation}
A proof together with more details about this inversion process, will be given
in Section \ref{gen-boundary}.
\end{rmk}

\begin{rmk}\label{rmk-reg-poles}\rm
We have already noticed that $\z^\reg_{(1)} (s)$ is an entire function. It is not true
any more for $\z^\reg_{(1,1)} (s_1,s_2)$. Indeed, we have
$$
\z(s_1,s_2)= \frac{1}{(s_1-1)(s_1+s_2-2)} + \frac{\z^\reg_{(1)} (s_2)}{s_1-1}
+\z^\reg_{(1,1)} (s_1,s_2)
$$
and $\z(s_1,s_2)$ has simple poles along the hyperplanes with equation $s_1=1$ and
$s_1+s_2=2,1,0,-2,-4,\ldots$. Since $\z^\reg_{(1,1)} (s_1,s_2)$ is holomorphic
around $(1,1)$, it has no polar singularities along the hyperplanes with equation
$s_1=1$ and $s_1+s_2=2$. But it has simple poles along each of the hyperplanes
with equation $s_1+s_2=1,0,-2,-4,\ldots$.

For $r\ge 3$, we encounter a new feature. When $r=3$ we have
\begin{align*}
\z(s_1,s_2,s_3) = & \frac{1}{(s_1-1)(s_1+s_2-2)(s_1+s_2+s_3-3)}
+ \frac{\z^\reg_{(1)} (s_3)}{(s_1-1)(s_1+s_2-2)}\\
& +\frac{\z^\reg_{(1,1)} (s_2,s_3)}{s_1-1}+\z^\reg_{(1,1,1)} (s_1,s_2,s_3)
\end{align*}
and since the hyperplanes with equation $s_2+s_3=1,0,-2,-4,\ldots$ are polar hyperplanes of
$\z^\reg_{(1,1)} (s_2,s_3)$ and not of $\z(s_1,s_2,s_3)$, they are polar hyperplanes of
$\z^\reg_{(1,1,1)} (s_1,s_2,s_3)$.
That the meromorphic function $\z^\reg_{(a_1, \ldots, a_r)} (s_1,\ldots,s_r)$
can have polar hyperplanes other than those of $\z(s_1,\ldots,s_r)$, when $r\ge 3$, will
be more evident from \eqref{reg-exp} in Section \ref{gen-boundary}.
\end{rmk}

Formula \eqref{multi-zeta-exp} is a generalisation of formula \eqref{zeta-exp}
for any depth $r \ge 1$. We therefore consider it as a Laurent type
expansion of $\z(s_1,\ldots,s_r)$ around the point $(1, \ldots, 1)$. For such
an expansion, we have the following unicity property.

\begin{prop}\label{prop-unicity}
Let $\rho$ be a positive real number and $D_r(\rho)$ denote the open polydisc
in $\C^r$ with center at the point $(1, \ldots, 1)$ and polyradius $(\rho, \ldots, \rho)$.
If in such a polydisc we have
\begin{equation}\label{unicity}
\sum_{i=0}^r \frac{h_i(s_{i+1},\ldots,s_r)}{(s_1-1) \cdots (s_1+\cdots+s_i-i)}=0,
\end{equation}
where $h_i$ is holomorphic in $D_{r-i}(\rho)$, then all $h_i$ are $0$.
\end{prop}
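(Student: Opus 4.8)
The plan is to prove \propref{prop-unicity} by induction on $r$, exploiting the pole structure of the various summands. For $r=0$ the statement reads $h_0=0$, which is trivial. For the inductive step, suppose $r\ge 1$ and that \eqref{unicity} holds on $D_r(\rho)$. The key observation is that every term with index $i\ge 1$ in the sum has a factor $(s_1-1)$ in the denominator, whereas the term of index $0$, namely $h_0(s_1,\ldots,s_r)$, is holomorphic on all of $D_r(\rho)$. So I would multiply \eqref{unicity} through by $(s_1-1)$ and then examine the behaviour as $s_1\to 1$. Concretely, write the identity as
$$
h_0(s_1,\ldots,s_r)+\frac{1}{s_1-1}\sum_{i=1}^r \frac{h_i(s_{i+1},\ldots,s_r)}{(s_1+s_2-2)\cdots(s_1+\cdots+s_i-i)}=0.
$$
Multiplying by $(s_1-1)$ and letting $s_1\to 1$ (with $s_2,\ldots,s_r$ fixed in the smaller polydisc and chosen so that none of the remaining denominators $(s_1+\cdots+s_j-j)$ vanish at $s_1=1$, which is possible since those are finitely many nonzero affine-linear conditions on a polydisc), the first term contributes $0$ and all terms with $i\ge 2$ also contribute $0$ because they retain a factor $1/(s_1+s_2-2)\cdots$ that stays bounded; only the $i=1$ term survives, giving $h_1(s_2,\ldots,s_r)=0$ at the chosen points. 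Since $h_1$ is holomorphic on $D_{r-1}(\rho)$ and vanishes on a nonempty open subset, $h_1\equiv 0$ on $D_{r-1}(\rho)$.

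Once $h_1=0$, the identity \eqref{unicity} reduces to
$$
h_0(s_1,\ldots,s_r)+\sum_{i=2}^r \frac{h_i(s_{i+1},\ldots,s_r)}{(s_1-1)(s_1+s_2-2)\cdots(s_1+\cdots+s_i-i)}=0.
$$
Now I would like to reindex this so as to apply the induction hypothesis. Here is the structural point: freeze $s_1$ at a generic value $s_1=c$ near $1$ with $c\ne 1$, and set $t_1=s_1+s_2-2$, i.e.\ replace the variable $s_2$ by $t_1:=c+s_2-2$; similarly the $i$-th denominator becomes $(c-1)\,(t_1)\,(t_1+s_3-1)\cdots$. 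Dividing the whole identity by the constant $(c-1)$, the sum over $i\ge 2$ takes exactly the shape of a depth-$(r-1)$ expansion of the form \eqref{unicity}, in the variables $(t_1,s_3,\ldots,s_r)$ centred at $(1,\ldots,1)$: the denominator of the $i$-th term is $(t_1)(t_1+s_3-1)\cdots(t_1+s_3+\cdots+s_i-(i-1))$ after absorbing the shift, and the numerators $h_i(s_{i+1},\ldots,s_r)$ do not involve $t_1$ or $c$ at all. The index-$0$ numerator $h_0(c,\ldots,s_r)$ plays the role of the new ``$h_0$''. Applying \propref{prop-unicity} in depth $r-1$ (on a possibly smaller polyradius, which is fine since we only need vanishing on a nonempty polydisc to conclude identical vanishing of holomorphic functions) yields $h_i=0$ for all $2\le i\le r$, and then $h_0=0$ as well. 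This completes the induction.

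The main obstacle I anticipate is purely bookkeeping: making the substitution $s_2\mapsto t_1 = s_1+s_2-2$ genuinely land inside a polydisc of the right type and verifying that freezing $s_1=c$ does not collapse the polydisc in a degenerate way. One must check that for $c$ close enough to $1$ (but $\ne 1$) there is a nonempty polydisc in $(t_1,s_3,\ldots,s_r)$-space, centred at $(1,\ldots,1)$, on which all the relevant functions are holomorphic and on which the reindexed identity holds; this is where the hypothesis $\rho_1+\cdots+\rho_r<1$-type smallness and the compactness arguments used elsewhere in the paper come in to guarantee that the finitely many ``bad'' hyperplanes $s_1+\cdots+s_j=j$ ($2\le j\le r$) do not obstruct. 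A slightly cleaner alternative that avoids freezing variables: argue directly that the functions $1,\ \tfrac{1}{s_1-1},\ \tfrac{1}{(s_1-1)(s_1+s_2-2)},\ \ldots$ are linearly independent over the ring of holomorphic functions in $(s_2,\ldots,s_r)$ — but I expect the inductive argument above to be the most transparent, with the variable-substitution step being the one requiring care.
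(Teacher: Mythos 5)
Your first step has a genuine error. After multiplying \eqref{unicity} by $(s_1-1)$, the term of index $i\ge 2$ becomes $h_i(s_{i+1},\ldots,s_r)\big/\bigl((s_1+s_2-2)\cdots(s_1+\cdots+s_i-i)\bigr)$, and as $s_1\to 1$ this converges to
$$
\frac{h_i(s_{i+1},\ldots,s_r)}{(s_2-1)(s_2+s_3-2)\cdots(s_2+\cdots+s_i-(i-1))},
$$
which at a generic point is bounded but \emph{not} zero; boundedness of a factor does not make a term vanish in the limit. So you cannot conclude $h_1(s_2,\ldots,s_r)=0$ at this stage. What the limit actually yields is
$$
h_1(s_2,\ldots,s_r)+\sum_{i=2}^{r}\frac{h_i(s_{i+1},\ldots,s_r)}{(s_2-1)\cdots(s_2+\cdots+s_i-(i-1))}=0,
$$
on a dense open subset of $D_{r-1}(\rho)$, hence everywhere. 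This is exactly the depth-$(r-1)$ instance of the proposition, with $h_1,\ldots,h_r$ playing the roles of the new $h_0,\ldots,h_{r-1}$. Ironically, this observation repairs and simplifies your plan: the induction hypothesis applied to this identity gives $h_1=\cdots=h_r=0$ at once, and then \eqref{unicity} forces $h_0=0$, making your entire second step (freezing $s_1=c$ and substituting $t_1$) unnecessary. That second step is essentially sound modulo the bookkeeping you flag, though note the disc in $t_1$ is centred at $c$, not at $1$, so you must shrink to $|t_1-1|<\rho-|c-1|$ before invoking the induction hypothesis.

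For comparison, the paper avoids both induction and limits: arguing by contradiction, it takes $j$ to be the largest index with $h_j\neq 0$, multiplies \eqref{unicity} by $(s_1-1)\cdots(s_1+\cdots+s_j-j)$, and restricts to a point $(1,\ldots,1,s_{j+1},\ldots,s_r)$. Each term of index $i<j$ then carries the factor $(s_1+\cdots+s_j-j)$ in its numerator, which vanishes there; the terms of index $i>j$ vanish by maximality of $j$; only $h_j(s_{j+1},\ldots,s_r)$ survives, giving the contradiction directly.
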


\begin{proof}
We argue by contradiction. Let $j$ denote the largest natural number $\le r$
such that $h_j\neq 0$. We then multiply \eqref{unicity} by $(s_1-1) \cdots (s_1+\cdots+s_j-j)$
and then restrict this equality to a point of the form $(1,\ldots,1,s_{j+1},\ldots,s_r)$,
with $(s_{j+1},\ldots,s_r) \in D_{r-j}(\rho)$. We get that $h_j(s_{j+1},\ldots,s_r)=0$,
which is a contradiction.
\end{proof}

\section{Generalisation to some integral points in $\overline{U_r}$}\label{pos-boundary}
Here we consider any point $(a_1,\ldots,a_r) \in \Z^r$
which is of the form $(1,\ldots,1, a_{l+1},\ldots,a_r)$ for some $0 \le l \le r$
and $(a_{l+1},\ldots,a_r) \in U_{r-l}$. 
In fact, around such a point the Laurent type expansion of $\z(s_1,\ldots,s_r)$ is
similar to \eqref{multi-zeta-exp}. Note that the polar hyperplanes
of $\z(s_1,\ldots,s_r)$ passing through the point $(a_1,\ldots,a_r)$
are the ones given by the following equations :
$$
s_1=1, s_1+s_2=2, \ldots, s_1+\cdots+s_l=l.
$$

\begin{thm}\label{thm-gen-multi-zeta-exp}
Let $r \ge 0$ be an integer and $(a_1,\ldots,a_r)$ be as above. The power series \eqref{reg-multi-zeta}
$$
\sum_{k_1,\ldots,k_r \ge 0}
\frac{(-1)^{k_1+\cdots+k_r}}{k_1! \cdots k_r!}
\gamma_{k_1, \ldots, k_r}^{(a_1,\ldots,a_r)} (s_1-a_1)^{k_1} \cdots (s_r-a_r)^{k_r}
$$
converges in a neighbourhood of the point $(a_1,\ldots,a_r)$ of $\C^r$.
It extends to a meromorphic function on $\C^r$, denoted by
$\z^\reg_{(a_1,\ldots,a_r)} (s_1,\ldots,s_r)$ and we have the following
equality between meromorphic functions on $\C^r :$
\begin{equation}\label{gen-multi-zeta-exp}
\z(s_1,\ldots,s_r) = \sum_{i=0}^l
\frac{\z^\reg_{(a_{i+1},\ldots,a_r)} (s_{i+1},\ldots,s_r)}{(s_1-1) \cdots (s_1+\cdots+s_i-i)}.
\end{equation}
\end{thm}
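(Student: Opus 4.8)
The plan is to adapt the inductive argument used for \thmref{thm-multi-zeta-exp} to this more general point $(a_1,\ldots,a_r)=(1,\ldots,1,a_{l+1},\ldots,a_r)$. I would proceed by induction on the depth $r$. The base case $r=0$ (equivalently $l=0$) is trivial since $\z(\varnothing)=1=\z^\reg_{(\varnothing)}(\varnothing)$. More importantly, when $l=0$ the point $(a_1,\ldots,a_r)$ lies in $U_r$ itself, so \eqref{reg-multi-zeta} is just the genuine Taylor expansion of $\z$ and there is nothing to prove; thus one only needs to treat $l\ge 1$, and in that case $a_1=1$. So as in \thmref{thm-multi-zeta-exp} the first coordinate plays the special role and one expands $n_1^{1-s_1}-(n_1+1)^{1-s_1}$ as in \eqref{first-step}.

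The key steps I would carry out are as follows. First, fix a small open polydisc $D$ centred at $(a_1,\ldots,a_r)$ of polyradius $(\rho_1,\ldots,\rho_r)$ with $\rho_1+\cdots+\rho_r$ small enough that the relevant compact $\overline{D}$ behaves well under the shift operators $\tau_x$; concretely I want $\tau_1(\overline{D})$ (the shift in the first coordinate) to already sit inside $U_r$, which is possible since $(a_1,\ldots,a_r)\in\overline{U_r}$ and the only constraint failing at the point is $s_1=1$. Then multiply \eqref{first-step} by $n_2^{-s_2}\cdots n_r^{-s_r}$, sum over $n_1>\cdots>n_r>0$ with $n_1\ge N$, exactly as in \eqref{second-step}, obtaining
$$
N^{1-s_1}\z(s_2,\ldots,s_r)_{<N}+\xi_N(s_1+s_2-1,s_3,\ldots,s_r)=\sum_{k\ge 0}(-1)^k\frac{(s_1-1)_{k+1}}{(k+1)!}\xi_N(s_1+k,s_2,\ldots,s_r),
$$
with the interchange justified by normal summability. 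Applying \lemref{lem-negligible} with $K=\overline{D}$, $A=0$, $k_0=1$ shows that
$$
u_N(s_1,\ldots,s_r):=N^{1-s_1}\z(s_2,\ldots,s_r)_{<N}-\z(s_1+s_2-1,s_3,\ldots,s_r)_{<N}+(s_1-1)\z(s_1,\ldots,s_r)_{<N}
$$
converges uniformly on $D$ to $u:=(s_1-1)\z(s_1,\ldots,s_r)-\z(s_1+s_2-1,s_3,\ldots,s_r)$, which is therefore holomorphic on $D$. Next invoke \thmref{thm-msc} and \rmkref{rmk-order-3}: since $(a_2,\ldots,a_r)\in\overline{U_{r-1}}$ and $(a_1+a_2-1,a_3,\ldots,a_r)\in\overline{U_{r-1}}$ the relevant truncated sums are $P(\log N)+o(1)$, so each Taylor coefficient $a_{k_1,\ldots,k_r}(N)$ of $u_N$ at $(a_1,\ldots,a_r)$ has the form $P_{k_1,\ldots,k_r}(\log N)+o(1)$; by \lemref{lem-conv} the $P_{k_1,\ldots,k_r}$ are constants, equal to the Taylor coefficients of $u$. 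Reading off the constant terms from the explicit formula for $u_N$ identifies these coefficients with those of $\z^\reg_{(a_2,\ldots,a_r)}(s_2,\ldots,s_r)-\z^\reg_{(a_1+a_2-1,a_3,\ldots,a_r)}(s_1+s_2-1,s_3,\ldots,s_r)+(s_1-1)v$, where $v$ is the formal power series in the statement; note the first two regularised functions exist (as convergent series near the relevant points, then meromorphic) by the induction hypothesis applied in depth $r-1$. This shows $(s_1-1)v$, hence $v$, converges on $D$, and yields
$$
(s_1-1)\z(s_1,\ldots,s_r)-\z(s_1+s_2-1,s_3,\ldots,s_r)=\z^\reg_{(a_2,\ldots,a_r)}(s_2,\ldots,s_r)-\z^\reg_{(a_1+a_2-1,a_3,\ldots,a_r)}(s_1+s_2-1,s_3,\ldots,s_r)+(s_1-1)\z^\reg_{(a_1,\ldots,a_r)}(s_1,\ldots,s_r)
$$
on $D$. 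Finally, apply the induction hypothesis to $\z(s_1+s_2-1,s_3,\ldots,s_r)$: its "$l$-parameter" drops by one to $l-1$ and one gets $\z(s_1+s_2-1,s_3,\ldots,s_r)-\z^\reg_{(a_1+a_2-1,a_3,\ldots,a_r)}(\cdots)=\sum_{i=2}^l\frac{\z^\reg_{(a_{i+1},\ldots,a_r)}(s_{i+1},\ldots,s_r)}{(s_1+s_2-2)\cdots(s_1+\cdots+s_i-i)}$; substituting and dividing by $s_1-1$ produces \eqref{gen-multi-zeta-exp} on $D$, and the meromorphic continuation to all of $\C^r$ follows since every term on the right is already meromorphic, forcing $v$ to extend meromorphically as $\z^\reg_{(a_1,\ldots,a_r)}$.

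The main obstacle, I expect, is bookkeeping the shifted base point correctly: one must check that when the first coordinate is removed or when the first two coordinates are fused via $s_1+s_2-1$, the resulting point still has the required special form $(1,\ldots,1,\ast,\ldots,\ast)$ with tail in the appropriate $U$, so that the induction hypothesis genuinely applies — in particular $a_1+a_2-1$ equals $1$ when $l\ge 2$ (since then $a_2=1$) but equals $a_2$ when $l=1$, in which latter case the fused point $(a_2,a_3,\ldots,a_r)=(a_2,\ldots,a_r)$ lies in $U_{r-1}$ and the sum on the right of \eqref{gen-multi-zeta-exp} has only the $i=0,1$ terms, consistent with $\z^\reg$ agreeing with $\z$ there. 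A secondary technical point is ensuring the polyradius of $D$ can be chosen uniformly small enough that all of \lemref{lem-negligible}, \lemref{lem-conv}, and the convergence of $v$ hold simultaneously; this is routine given the hypotheses. Everything else is a direct transcription of the proof of \thmref{thm-multi-zeta-exp} with $1$ replaced by the appropriate $a_i$ in the regularised functions and with the sum truncated at $i=l$ rather than $i=r$.
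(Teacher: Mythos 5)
Your proposal is correct and is essentially the paper's own argument: the paper proves this theorem by stating that the proof of \thmref{thm-multi-zeta-exp} carries over mutatis mutandis (inducting on $l$ rather than on $r$, an immaterial difference since in either organisation the inductive step invokes the result at the depth-$(r-1)$ points $(a_2,\ldots,a_r)$ and $(a_1+a_2-1,a_3,\ldots,a_r)$, both of which have one fewer leading $1$). Your bookkeeping of the shifted and fused base points, including the distinction between $l\ge 2$ and $l=1$, is exactly the verification the paper leaves implicit.
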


\begin{exm}\rm
We have, in a neighbourhood of $(1,2)$  :
$$
\z(s_1,s_2)=\frac{1}{s_1-1} \sum_{k \ge 0} \frac{(-1)^k \gamma_k^{(2)}}{k!} (s_2-2)^k
+ \sum_{k_1,k_2 \ge 0} \frac{(-1)^{k_1+k_2} \gamma_{k_1,k_2}^{(1,2)}}{k_1! k_2!}
(s_1-1)^{k_1} (s_2-2)^{k_2},
$$
where $(-1)^k \gamma_k^{(2)}$ is just $D^{k}\z(2)$.
\end{exm}

\begin{proof}[Proof of \thmref{thm-gen-multi-zeta-exp}]
The proof is by induction on $l$ (for arbitrary $r$). When $l=0$, \eqref{gen-multi-zeta-exp}
is nothing but \eqref{multi-zeta-conv}. The induction then carries out mutatis-mutandis
with $(1,\ldots,1)$ in $\C^r$ replaced by $(1,\ldots, 1, a_{l+1},\ldots,a_r)$
in the proof of \thmref{thm-multi-zeta-exp}.
\end{proof}

\section{A combinatorial formula}\label{comb-form}

We need a general combinatorial formula (see \eqref{comb-form-1} below) satisfied by
the multiple zeta functions, to explain their local behaviour at any integral point in the
closure of the domain of convergence.

For $r\ge 0$, the multiple zeta-star function of depth $r$
is defined on $U_r$ by the series expression
\begin{equation}\label{zeta-star}
\z^\star(s_1,\ldots,s_r) := \sum_{n_1 \ge \cdots \ge n_r \ge 1} n_1^{-s_1} \cdots n_r^{-s_r},
\end{equation}
which converges normally on any compact subset of $U_r$. In particular,
the multiple zeta-star function of depth $0$ is defined by $\z^\star(\varnothing) :=1$.
The multiple zeta-star function of depth $r$ has a meromorphic extension to $\C^r$,
as can be seen by expressing it in terms of the multiple zeta functions of depth $\le r$.

Recall that in \eqref{zeta-initial} we have defined a holomorphic function on $\C^r$ by
$$
\z(s_1,\ldots,s_r)_{<N} := \sum_{N > n_1 > \cdots >n_r>0} n_1^{-s_1} \cdots n_r^{-s_r}
$$
for any integer $N \ge 1$. Similarly for any integer $N \ge 1$,
we define on $\C^r$ a holomorphic function by
\begin{equation}\label{zeta-star-initial}
\z^\star(s_1,\ldots,s_r)_{\le N}  := \sum_{N \ge n_1 \ge \cdots \ge n_r \ge 1}
n_1^{-s_1} \cdots n_r^{-s_r}.
\end{equation}
For $(s_1,\ldots,s_r) \in U_r$, we also consider the tails
\begin{equation}\label{zeta-tail}
\z(s_1,\ldots,s_r)_{>N}   := \sum_{n_1 > \cdots >n_r>N} n_1^{-s_1} \cdots n_r^{-s_r},
\end{equation}
and
\begin{equation}\label{zeta-star-tail}
\z^\star(s_1,\ldots,s_r)_{\ge N}  := \sum_{n_1 \ge \cdots \ge n_r \ge N}
n_1^{-s_1} \cdots n_r^{-s_r}.
\end{equation}
In depth $0$, we use the conventions that
$\z(\varnothing)_{< N}=\z^\star(\varnothing)_{\le N}=
\z(\varnothing)_{> N}=\z^\star(\varnothing)_{\ge N}  :=1$.
The infinite sums in \eqref{zeta-tail} and \eqref{zeta-star-tail} are in fact normally
convergent on any compact subset of $U_r$ and hence define holomorphic functions there.
From \cite[Remark 1]{MSV}, it follows that $\z(s_1,\ldots,s_r)_{>N}$ has a
meromorphic extension to $\C^r$. The same is then true for $\z^\star(s_1,\ldots,s_r)_{\ge N}$.
With these notations in place we prove the following combinatorial formula.

\begin{thm}\label{thm-comb-form-1}
For each integer $N \ge 1$, we have the following equality between
meromorphic functions on $\C^r  :$
\begin{equation}\label{comb-form-1}
\z(s_1,\ldots,s_r)_{< N}= \sum_{i=0}^r
(-1)^i \z^\star (s_i,\ldots,s_1)_{\ge N} \z(s_{i+1},\ldots,s_r).
\end{equation}
\end{thm}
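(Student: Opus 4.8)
The plan is to prove \eqref{comb-form-1} by induction on the depth $r$, peeling off the innermost (or outermost) variable. For $r=0$ both sides equal $1$ by the depth-$0$ conventions, so the base case is immediate. For the inductive step, I would partition the summation set $\{N > n_1 > \cdots > n_r > 0\}$ defining $\z(s_1,\ldots,s_r)_{<N}$ according to the position of $n_1$ relative to $N$ in a way that isolates a sub-chain that is unconstrained by $N$. Concretely: write $\z(s_1,\ldots,s_r)_{<N}$ by splitting off the leading variable $n_1$, and use the elementary telescoping identity $\sum_{m \ge 1}(\cdot)_{<m+1}-(\cdot)_{<m}$-style manipulation to rewrite the condition $n_1 < N$ as a sum over the "block" of indices that are $\ge N$ in the zeta-star part versus those $< N$ in the remaining zeta part.

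The cleaner route is probably to start from the trivial identity, valid on $U_r$ as an equality of absolutely convergent series,
$$
\z(s_1,\ldots,s_r) = \z(s_1,\ldots,s_r)_{<N} + \sum_{j=0}^{r} \z^\star(s_j,\ldots,s_1)_{\ge N}\,\bigl(\z(s_{j+1},\ldots,s_r) - \delta_{j,0}\,\z(s_1,\ldots,s_r)\bigr),
$$
obtained by classifying a tuple $n_1 > \cdots > n_r > 0$ according to the largest index $j$ with $n_j \ge N$ (with $j=0$ meaning $n_1 < N$): the first $j$ coordinates form a strictly decreasing chain all $\ge N$, contributing $\z(s_1,\ldots,s_j)_{>N-1}$, which one rewrites in zeta-star reversed form, and the last $r-j$ coordinates form the tail $\z(s_{j+1},\ldots,s_r)_{>N'}$ plus its head — at which point one recognizes a recursion and invokes the induction hypothesis on smaller depth. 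I would carry this out by first establishing the depth-$1$ case by hand (here it reduces to $\z(s_1)_{<N} = \z(s_1) - \z(s_1)_{>N-1}$ combined with $\z^\star(s_1)_{\ge N} = \z(s_1)_{>N-1}$), then feeding the depth-$(r-1)$ identity into the split above.

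Alternatively — and this may be the slickest formulation — one can prove \eqref{comb-form-1} purely combinatorially on $U_r$ as an identity of absolutely convergent multiple series by a direct bijective/inclusion–exclusion argument: expand $\z^\star(s_i,\ldots,s_1)_{\ge N}$ and $\z(s_{i+1},\ldots,s_r)$ as series, observe that the product ranges over pairs of chains $(m_1 \ge \cdots \ge m_i \ge N)$ and $(n_{i+1} > \cdots > n_r > 0)$, and check that the alternating sum over $i$ collapses all terms except those with $m_1 > \cdots > m_i$ all distinct and $< N$ would contribute — the $(-1)^i$ signs implement exactly the inclusion–exclusion that converts "$\ge$" chains into "$<N$" strict chains. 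Since the $\z^\star_{\ge N}$ and $\z$ factors are normally convergent on compacta of $U_r$, the rearrangement is legitimate there, and the identity of meromorphic functions on all of $\C^r$ follows by analytic continuation, using that each side has already been shown (citing \cite[Remark 1]{MSV}) to be meromorphic on $\C^r$.

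The main obstacle I anticipate is bookkeeping rather than conceptual: getting the index $i$, the reversal of arguments $(s_i,\ldots,s_1)$, and the off-by-one in the $\ge N$ versus $> N$ (and $< N$ versus $\le N$) conventions to line up exactly with the stated formula, and making sure the telescoping/inclusion–exclusion has no leftover boundary term. Care is also needed to justify that the purely formal manipulation of series done inside $U_r$ survives meromorphic continuation — but since every individual function appearing has an established meromorphic extension to $\C^r$ and two meromorphic functions agreeing on the nonempty open set $U_r$ agree everywhere, this last step is routine once the combinatorial identity is verified on $U_r$.
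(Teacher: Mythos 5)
Your third formulation is essentially the paper's proof: expand each product $\z^\star(s_i,\ldots,s_1)_{\ge N}\,\z(s_{i+1},\ldots,s_r)$ as the sum of $n_1^{-s_1}\cdots n_r^{-s_r}$ over $A_i=\{n_i\ge\cdots\ge n_1\ge N,\ n_{i+1}>\cdots>n_r>0\}$ and show the alternating sum collapses. The precise mechanism, which you leave unverified and describe somewhat vaguely as inclusion--exclusion, is really a telescoping: with $B_0=\{N>n_1>\cdots>n_r>0\}$ and $B_i=\{n_i\ge\cdots\ge n_1\ge N,\ n_i>n_{i+1}>\cdots>n_r>0\}$ one has $A_i=B_i\sqcup B_{i+1}$ for $i<r$ and $A_r=B_r$, whence $\sum_{i}(-1)^i\mathbf{1}_{A_i}=\mathbf{1}_{B_0}$; this is easy enough that I would not count its omission as a gap. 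Two points do need correction, however. First, the factors are \emph{not} all normally convergent on compacta of $U_r$: the arguments of the star factor are reversed, so $\z^\star(s_i,\ldots,s_1)_{\ge N}$ requires roughly $(s_i,\ldots,s_1)\in U_i$, which is not implied by $(s_1,\ldots,s_r)\in U_r$ (e.g.\ $(10,\tfrac12)\in U_2$ but $\z^\star(s_2,s_1)_{\ge N}$ diverges there). You must carry out the series manipulation on $V_r=\{\Re(s_j)>1 \text{ for all } j\}$, as the paper does, and then invoke the identity theorem --- a one-line fix, but the claim as written is false. Second, the ``trivial identity'' in your middle paragraph does not hold: classifying a strict chain $n_1>\cdots>n_r>0$ by the largest $j$ with $n_j\ge N$ produces a \emph{strictly} decreasing block above $N$, which is not $\z^\star(s_j,\ldots,s_1)_{\ge N}$ for $j\ge 2$ (strict versus weak inequalities), and your display is also missing the alternating signs; that route would not reach \eqref{comb-form-1} without substantial repair. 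Since you single out the third route as the one you would actually execute, the proposal is salvageable, but only that route works, and only after the domain of convergence is corrected.
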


\begin{proof}
It is enough to prove \eqref{comb-form-1} as an equality between
holomorphic functions in the open set
$$
V_r  :=\{(s_1,\ldots,s_r) \in \C^r   : \Re(s_i)>1 \text{ for } 1\le i \le r\}.
$$
We have, for $0 \le i \le r$,
\begin{equation}\label{comb-form-1-RHS-term}
\z^\star (s_i,\ldots,s_1)_{\ge N} \z(s_{i+1},\ldots,s_r)
= \sum_{(n_1, \ldots,n_r) \in A_i} n_1^{-s_1} \cdots n_r^{-s_r},
\end{equation}
where
\begin{equation*}\label{set-A-i}
A_i  :=\{(n_1, \ldots,n_r) \in \N^r   : n_i \ge \cdots \ge n_1 \ge N, n_{i+1} > \cdots >n_r>0\}.
\end{equation*}
Let
\begin{equation*}\label{set-B-0}
B_0  :=\{(n_1, \ldots,n_r) \in \N^r   : N > n_{1} > \cdots >n_r>0\}
\end{equation*}
and for $0 \le i \le r$,
\begin{equation*}\label{set-B-i}
B_i  :=\{(n_1, \ldots,n_r) \in \N^r   : n_i \ge \cdots \ge n_1 \ge N, n_{i} > \cdots >n_r>0\}.
\end{equation*}
Then $A_i$ is the disjoint union of $B_i$ and $B_{i+1}$ for $0 \le i <r$ and is equal to $B_r$
for $i=r$. Now by \eqref{comb-form-1-RHS-term}, the right hand side of
\eqref{comb-form-1} is
$$
\sum_{i=0}^r (-1)^i \sum_{(n_1, \ldots,n_r) \in A_i} n_1^{-s_1} \cdots n_r^{-s_r}
= \sum_{(n_1, \ldots,n_r) \in B_0} n_1^{-s_1} \cdots n_r^{-s_r} = \z(s_1,\ldots,s_r)_{< N}.
$$
This completes the proof of \thmref{thm-comb-form-1}.
\end{proof}

Similarly one can also obtain the formula
\begin{equation}\label{comb-form-2}
\z^\star(s_1,\ldots,s_r)_{\le N}= \sum_{i=0}^r
(-1)^i \z(s_i,\ldots,s_1)_{> N} \z^\star(s_{i+1},\ldots,s_r).
\end{equation}
By taking $N=1$ in \eqref{comb-form-1}, we recover the well known formula
\begin{equation}\label{comb-form-cor}
\sum_{i=0}^r (-1)^i \z^\star(s_i,\ldots,s_1) \z(s_{i+1},\ldots,s_r)=0.
\end{equation}

\section{The case of general integral points in $\overline{U_r}$}\label{gen-boundary}

For a general point $(a_1,\ldots,a_r) \in \Z^r \cap \overline{U_r}$, the Laurent
type expansion  of $\z(s_1,\ldots,s_r)$ around the point
$(a_1,\ldots,a_r)$, does not have a simple form as \eqref{multi-zeta-exp} and
\eqref{gen-multi-zeta-exp} in general (see Example 4 below). But in that
case we are able to give a different but elegant expression, from which
\eqref{multi-zeta-exp} and \eqref{gen-multi-zeta-exp} can be deduced as special cases.

\subsection{Statement of the theorem}

\begin{thm}\label{thm-reg-exp}
Let $r \ge 0$ be an integer and $(a_1,\ldots,a_r) \in \overline{U_r} \cap \Z^r$.
The power series \eqref{reg-multi-zeta}
$$
\sum_{k_1,\ldots,k_r \ge 0}
\frac{(-1)^{k_1+\cdots+k_r}}{k_1! \cdots k_r!}
\gamma_{k_1, \ldots, k_r}^{(a_1,\ldots,a_r)}
(s_1-a_1)^{k_1} \cdots (s_r-a_r)^{k_r}
$$
converges in a neighbourhood of $(a_1, \ldots, a_r)$
and extends to a meromorphic function in the whole of $\C^r$, denoted by
$\z^\reg_{(a_1, \ldots, a_r)} (s_1,\ldots,s_r)$.
Then we have the following equality
\begin{equation}\label{reg-exp}
\z^\reg_{(a_1, \ldots, a_r)} (s_1,\ldots,s_r)
= \sum_{i \in I}
\frac{(-1)^i \z(s_{i+1},\ldots,s_r)}{(s_i-1)(s_i+s_{i-1}-2) \cdots (s_i+\cdots+s_1-i)}
\end{equation}
between meromorphic functions on $\C^r$, where 
$I=I(a_1,\ldots,a_r)$ denotes the set of indices $i$
such that $0 \le i \le r$ and $a_1+\cdots+a_i=i$.
\end{thm}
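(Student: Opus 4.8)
The plan is to prove \thmref{thm-reg-exp} by induction on the depth $r$, in close analogy with the proof of \thmref{thm-multi-zeta-exp}, but now using the combinatorial formula \eqref{comb-form-1} of \thmref{thm-comb-form-1} as the main structural input. The case $r=0$ is trivial. For $r\ge 1$, the key identity to exploit is \eqref{comb-form-1} rewritten as
$$
\z(s_1,\ldots,s_r)_{<N}= \sum_{i=0}^r
(-1)^i \z^\star (s_i,\ldots,s_1)_{\ge N}\, \z(s_{i+1},\ldots,s_r),
$$
from which, subtracting off the divergent (non-tail) contributions, one isolates a sequence of holomorphic functions on a small polydisc $D$ around $(a_1,\ldots,a_r)$ whose Taylor coefficients at $(a_1,\ldots,a_r)$ are (by \thmref{thm-msc} and \rmkref{rmk-order-3}, since $(a_1,\ldots,a_r)\in\overline{U_r}$) of the form $P_{k_1,\ldots,k_r}(\log N)+o(1)$. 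Applying \lemref{lem-conv} then forces each $P_{k_1,\ldots,k_r}$ to be constant, equal to the $(k_1,\ldots,k_r)$-th Taylor coefficient of the limit; identifying those constants with the multiple Stieltjes constants $\gamma_{k_1,\ldots,k_r}^{(a_1,\ldots,a_r)}$ via \defnref{defn-msc} yields simultaneously the convergence of the power series \eqref{reg-multi-zeta} on $D$ and the claimed formula on $D$. Meromorphic continuation of the right-hand side of \eqref{reg-exp} to all of $\C^r$ is automatic from the known meromorphy of the multiple zeta functions, so the identity then holds as meromorphic functions on $\C^r$.

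More precisely, I would first analyse the behaviour of $\z^\star(s_i,\ldots,s_1)_{\ge N}$ as $N\to\infty$ on the relevant polydisc. The point is that $\z^\star(s_i,\ldots,s_1)_{\ge N} = \z^\star(s_i,\ldots,s_1) - \z^\star(s_i,\ldots,s_1)_{\le N-1}$, and when restricted near $(a_i,\ldots,a_1)$ one needs to understand which indices $i$ contribute a genuinely divergent piece: these are precisely the $i$ with $a_1+\cdots+a_i=i$, i.e. $i\in I$, because then $(a_i,\ldots,a_1)$ lies on (or beyond) the boundary hyperplane $s_i+\cdots+s_1=i$ where the zeta-star tail fails to decay. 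For $i\notin I$ one expects $\z^\star(s_i,\ldots,s_1)_{\ge N}\to 0$ (indeed $=o(N^{-\epsilon})$ for some $\epsilon>0$) uniformly on a small enough polydisc, by the same normal-summability estimates used in \lemref{lem-negligible} and \cite[Remark 1]{MSV}. Thus, passing to the limit in \eqref{comb-form-1} after subtracting $\z(s_1,\ldots,s_r)$, the only surviving divergent contributions come from $i\in I$, and they organise exactly into the sum on the right of \eqref{reg-exp} — here one uses that $\z^\star(s_i,\ldots,s_1)_{\ge N}$ agrees, up to terms tending to $0$, with $\pm\bigl((s_i-1)(s_i+s_{i-1}-2)\cdots(s_i+\cdots+s_1-i)\bigr)^{-1}$ near the point, which is the depth-$i$ analogue of the elementary telescoping identity \eqref{first-step} iterated; alternatively one invokes the inversion already recorded in \eqref{converse-multi-zeta-exp} / \rmkref{rmk-inversion} in the special cases handled there and extends the bookkeeping.

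The main obstacle I anticipate is the second step: making rigorous the claim that, near an integral point $(a_1,\ldots,a_r)\in\overline{U_r}$, the truncated zeta-star function $\z^\star(s_i,\ldots,s_1)_{\ge N}$ splits cleanly into a ``polar part'' $\pm\prod_{j\le i}(s_i+\cdots+s_{i-j+1}-\ \cdot\ )^{-1}$ plus a remainder that is holomorphic on a fixed polydisc and converges uniformly to $0$, and controlling this uniformly in the Taylor coefficients. This requires the normal-summability machinery of \cite{MSV} applied to the zeta-star tails, together with a careful choice of polyradii (as in the condition $\rho_1+\cdots+\rho_r<1$ in the proof of \thmref{thm-multi-zeta-exp}, now adapted to the location of $(a_1,\ldots,a_r)$) so that all the relevant comparison series converge and the error estimates of \lemref{lem-negligible}-type hold. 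Once this uniform decomposition is in hand, the remainder of the argument — applying \lemref{lem-conv}, reading off that the constant polynomials are the $\gamma_{k_1,\ldots,k_r}^{(a_1,\ldots,a_r)}$, and invoking the induction hypothesis plus meromorphic continuation — is routine and parallels \thmref{thm-multi-zeta-exp} almost verbatim.
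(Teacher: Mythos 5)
Your overall strategy---start from the combinatorial identity \eqref{comb-form-1}, isolate the contributions of the indices $i\in I$, and finish with \thmref{thm-msc} and \lemref{lem-conv}---is exactly the paper's. But the key analytic claim you reduce everything to is false as stated. You assert that for $i\in I$ the tail $\z^\star(s_i,\ldots,s_1)_{\ge N}$ ``agrees, up to terms tending to $0$,'' with $\pm\bigl((s_i-1)(s_i+s_{i-1}-2)\cdots(s_i+\cdots+s_1-i)\bigr)^{-1}$ on a fixed polydisc. Already for $i=1$, $a_1=1$ this fails: $\z^\star(s)_{\ge N}$ behaves like $N^{1-s}/(s-1)$, and the difference
$$
\z^\star(s)_{\ge N}-\frac{1}{s-1}
$$
extends holomorphically across $s=1$ with value $-\log N+o(1)$ there, so it does not tend to $0$. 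The correct statement (the paper's \lemref{lem-star-tail-estimate}) is that, after multiplying by a polynomial $P$ divisible by $(s_i-1)\cdots(s_i+\cdots+s_1-i)$ to clear the poles, $P\,\z^\star(s_i,\ldots,s_1)_{\ge N}$ is uniformly close to $P\,N^{i-s_1-\cdots-s_i}\bigl((s_i-1)\cdots(s_i+\cdots+s_1-i)\bigr)^{-1}$; the factor $N^{i-s_1-\cdots-s_i}$ cannot be dropped. That factor is exactly what makes the endgame work: its Taylor expansion at $(a_1,\ldots,a_i)$ is $\sum\frac{(-\log N)^{k_1+\cdots+k_i}}{k_1!\cdots k_i!}(s_1-a_1)^{k_1}\cdots(s_i-a_i)^{k_i}$, with constant term $1$ and all higher coefficients multiples of $\log N$, so that when \lemref{lem-conv} forces the $\log N$-polynomials to be constant, the constant terms identify the multiple Stieltjes constants with the Taylor coefficients of the rational functions on the right of \eqref{reg-exp}. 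With your version of the decomposition the sequence you would feed into \lemref{lem-conv} simply does not converge uniformly, and the proof breaks at its central step.

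Two further points. First, the estimate you defer as ``the main obstacle'' is genuinely the heart of the proof and is not a direct application of \lemref{lem-negligible}: the paper establishes it by a double induction, on the depth $i$ and on the smallest $k_0$ with $(a_i+k_0,a_{i-1},\ldots,a_1)\in U_i$, using the translation formula \eqref{second-step-star-tail} for the star tails; moreover the multiplication by $P$ is unavoidable because both $\z^\star(s_i,\ldots,s_1)_{\ge N}$ and the cofactors $\z(s_{i+1},\ldots,s_r)$ may have polar hyperplanes passing through $(a_1,\ldots,a_r)$ even when $i\notin I$ (e.g.\ $\z^\star(s_2,s_1)_{\ge N}$ has a pole along $s_2=1$ through the point $(2,1)\in U_2$), so the phrase ``$\to 0$ uniformly on a small polydisc'' is not even meaningful before clearing poles. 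Second, your fallback of ``invoking the inversion already recorded in \eqref{converse-multi-zeta-exp}'' is circular: that formula is only announced in \rmkref{rmk-inversion}, with its proof deferred to the section that proves \thmref{thm-reg-exp} itself.
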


\begin{exm} \rm
When $(a_1,\ldots,a_r)=(1,\ldots,1)$, \eqref{reg-exp} is nothing but \eqref{converse-multi-zeta-exp}.
\end{exm}

\begin{exm} \rm
When $r=2$ and $(a_1,a_2)=(2,0)$, we have
$$
\z^\reg_{(2, 0)} (s_1,s_2) = \z(s_1,s_2) + \frac{1}{(s_2-1) (s_1+s_2-2)},
$$
which implies that
$$
\z(s_1,s_2) = - \frac{1}{(s_2-1) (s_1+s_2-2)} + \sum_{k_1,k_2 \ge 0}
\frac{(-1)^{k_1+k_2}}{k_1! k_2!}
\gamma_{k_1, k_2}^{(2,0)}(s_1-2)^{k_1} s_2^{k_2}
$$
in a neighbourhood of $(2,0)$.
Note that the rational function $\frac{1}{(s_2-1) (s_1+s_2-2)}$ has a pole
along the line given by the equation $s_2=1$, whereas $\z(s_1,s_2)$
has no such pole. Hence the meromorphic function $\z^\reg_{(2, 0)} (s_1,s_2)$
(which is holomorphic around (2,0)) has a pole along this line.
\end{exm}

\subsection{Proof of \thmref{thm-reg-exp}}

For our proof we use the combinatorial formula \eqref{comb-form-1} and therefore
we need to estimate $\z^\star(s_r,\ldots,s_1)_{\ge N}$ around a point
$(a_1,\ldots,a_r) \in \overline{U_r}$.

Note that when $(s_r,\ldots,s_1) \in U_r$, we can deduce a translation
formula satisfied by $\z^\star(s_r,\ldots,s_1)_{\ge N}$, starting with \eqref{first-step}.
This reads as follows  : for $r=1$,
\begin{equation}\label{second-step-star-tail-r=1}
N^{1-s_1} =\sum_{k\ge 0}(-1)^k \frac{(s_1-1)_{k+1}}{(k+1)!} \z^\star(s_1+k)_{\ge N},
\end{equation}
and for $r >1$,
\begin{equation}\label{second-step-star-tail}
\z^\star(s_r+s_{r-1}-1, s_{r-2},\ldots,s_1)_{\ge N}
=\sum_{k\ge 0}(-1)^k \frac{(s_r-1)_{k+1}}{(k+1)!} \z^\star(s_r+k,s_{r-1},\ldots,s_1)_{\ge N}.
\end{equation}
Formulas \eqref{second-step-star-tail-r=1}, \eqref{second-step-star-tail} can be extended
to whole of $\C^r$ as equalities between meromorphic functions. In fact, for any given point
$(a_1,\ldots,a_r) \in \C^r$, there exists a natural number $k_0$ such that
$(a_r+k_0, a_{r-1},\ldots,a_1) \in U_r$ and hence there exists a polydisc $D$
with center $(a_1,\ldots,a_r)$ such that
for all $k \ge k_0$, $\z^\star(s_r+k,s_{r-1},\ldots,s_1)_{\ge N}$
is holomorphic in $D$. Moreover, \lemref{lem-negligible} shows that the sum
$$
\sum_{k\ge k_0} \left\| (-1)^k \frac{(s_r-1)_{k+1}}{(k+1)!}
\z^\star(s_r+k,s_{r-1},\ldots,s_1)_{\ge N}\right\|_D
$$
exists and it is $o(1)$ as $N$ tends to $\infty$. Using this we now prove the following lemma,
which constitutes an important step in our proof of \thmref{thm-reg-exp}.

\begin{lem}\label{lem-star-tail-estimate}
Let $r \ge 0$ be an integer and $(a_1,\ldots,a_r) \in \C^r$ such that $a_1+\cdots+a_r \ge r$.
There exists a non-zero polynomial $P(s_1,\ldots,s_r)$ which is a multiple of
$(s_r-1)\cdots (s_r+\cdots+s_1-r)$ and a polydisc $D$
with center $(a_1,\ldots,a_r)$ such that \\
a) for any $N \ge 2$, the function $u_N$ is holomorphic in $D$, where
$$
u_N :=P(s_1,\ldots,s_r) \z^\star(s_r,\ldots,s_1)_{\ge N};
$$
b) if $a_1+\cdots+a_r >r$, then as $N$ tends to $\infty$,
$$
\|u_N\|_D=o(1);
$$
c) if $a_1+\cdots+a_r =r$, then as $N$ tends to $\infty$,
$$
\left\|u_N - \frac{P(s_1,\ldots,s_r)  N^{r-s_r-\cdots-s_1}}
{(s_r-1)\cdots (s_r+\cdots+s_1-r)}\right\|_D=o(1).
$$
\end{lem}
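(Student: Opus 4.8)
\textbf{Proof plan for \lemref{lem-star-tail-estimate}.}

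The plan is to argue by induction on $r$. For $r=0$ there is nothing to prove (take $P=1$). For $r=1$, formula \eqref{second-step-star-tail-r=1} can be rewritten as
$$
(s_1-1)\z^\star(s_1)_{\ge N} = N^{1-s_1} - \sum_{k\ge 1}(-1)^k \frac{(s_1-1)_{k+1}}{(k+1)!}\z^\star(s_1+k)_{\ge N};
$$
picking a polydisc $D$ around $a_1$ (using $a_1\ge 1$) so that \lemref{lem-negligible} applies with $r=1$, $A=0$, the tail sum on the right is $o(1)$ uniformly on $D$, which gives (a), (b) (when $a_1>1$) and (c) (when $a_1=1$) with $P(s_1)=s_1-1$. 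For the inductive step, suppose the lemma holds in depth $r-1$, and let $(a_1,\dots,a_r)$ with $a_1+\cdots+a_r\ge r$ be given. Choose $k_0\in\N$ with $(a_r+k_0,a_{r-1},\dots,a_1)\in U_r$ and a polydisc $D_0$ around $(a_1,\dots,a_r)$ on which, for all $k\ge k_0$, the functions $\z^\star(s_r+k,s_{r-1},\dots,s_1)_{\ge N}$ are holomorphic, and such that (as already noted in the excerpt, via \lemref{lem-negligible}) the sum $\sum_{k\ge k_0}\bigl\|(-1)^k\frac{(s_r-1)_{k+1}}{(k+1)!}\z^\star(s_r+k,s_{r-1},\dots,s_1)_{\ge N}\bigr\|_{D_0}$ is $o(1)$ as $N\to\infty$. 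Rearranging \eqref{second-step-star-tail} gives
$$
(s_r-1)\z^\star(s_r,s_{r-1},\dots,s_1)_{\ge N}
= \z^\star(s_r+s_{r-1}-1,s_{r-2},\dots,s_1)_{\ge N}
- \sum_{k\ge 1}(-1)^k\frac{(s_r-1)_{k+1}}{(k+1)!}\z^\star(s_r+k,s_{r-1},\dots,s_1)_{\ge N},
$$
where the $k=0$ term of the original right side is exactly $\z^\star(s_r+k,\dots)_{\ge N}$ at $k=0$, i.e.\ $(s_r-1)\z^\star(s_r,\dots,s_1)_{\ge N}$, which is why it moved to the left.

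Now apply the induction hypothesis to the depth-$(r-1)$ tuple $(a_1,\dots,a_{r-2},a_{r-1}+a_r-1)$: its coordinate sum is $a_1+\cdots+a_{r-2}+(a_{r-1}+a_r-1)=a_1+\cdots+a_r-1\ge r-1$, so there is a nonzero polynomial $Q(s_1,\dots,s_{r-2},t)$, divisible by $(t-1)(t+s_{r-2}-2)\cdots(t+s_{r-2}+\cdots+s_1-(r-1))$, and a polydisc around $(a_1,\dots,a_{r-2},a_{r-1}+a_r-1)$, on which $Q(s_1,\dots,s_{r-2},s_{r-1}+s_r-1)\,\z^\star(s_r+s_{r-1}-1,s_{r-2},\dots,s_1)_{\ge N}$ is holomorphic and is either $o(1)$ (if $a_1+\cdots+a_r-1>r-1$, i.e.\ $a_1+\cdots+a_r>r$) or equals $\frac{Q(s_1,\dots,s_{r-2},s_{r-1}+s_r-1)N^{(r-1)-(s_{r-1}+s_r-1)-s_{r-2}-\cdots-s_1}}{(s_{r-1}+s_r-2)(s_{r-1}+s_r+s_{r-2}-3)\cdots(s_{r-1}+s_r+\cdots+s_1-r)}+o(1)$ (if $a_1+\cdots+a_r=r$); note $(r-1)-(s_{r-1}+s_r-1)-s_{r-2}-\cdots-s_1 = r-s_r-\cdots-s_1$, matching the exponent in (c). I would then multiply the displayed rearrangement of \eqref{second-step-star-tail} by the polynomial $Q(s_1,\dots,s_{r-2},s_{r-1}+s_r-1)$ and set
$$
P(s_1,\dots,s_r) := (s_r-1)\,Q(s_1,\dots,s_{r-2},s_{r-1}+s_r-1),
$$
shrinking the polydisc to a common $D\subset D_0$ on which both the induction-hypothesis estimate and the tail estimate hold; the tail sum, being $o(1)$ uniformly, stays $o(1)$ after multiplication by the (bounded on $D$) polynomial $Q$. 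One checks that $P$ is divisible by $(s_r-1)(s_r+s_{r-1}-2)\cdots(s_r+\cdots+s_1-r)$: the factor $s_r-1$ is explicit, and the remaining factors $(s_r+s_{r-1}-2)\cdots(s_r+\cdots+s_1-r)$ come from the divisibility of $Q$ by $(t-1)(t+s_{r-2}-2)\cdots(t+s_{r-2}+\cdots+s_1-(r-1))$ after the substitution $t=s_{r-1}+s_r-1$, which turns these into $(s_{r-1}+s_r-2)(s_{r-1}+s_r+s_{r-2}-3)\cdots(s_{r-1}+s_r+\cdots+s_1-r)$, i.e.\ exactly $(s_r+s_{r-1}-2)\cdots(s_r+\cdots+s_1-r)$. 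This yields (a) in all cases, and (b) resp.\ (c) by reading off the surviving main term: when $a_1+\cdots+a_r>r$ every contribution is $o(1)$; when $a_1+\cdots+a_r=r$ the only surviving term is $Q(s_1,\dots,s_{r-2},s_{r-1}+s_r-1)$ times $\frac{N^{r-s_r-\cdots-s_1}}{(s_{r-1}+s_r-2)\cdots(s_{r-1}+s_r+\cdots+s_1-r)}$, which equals $\frac{P(s_1,\dots,s_r)N^{r-s_r-\cdots-s_1}}{(s_r-1)(s_r+s_{r-1}-2)\cdots(s_r+\cdots+s_1-r)}$ as required.

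\textbf{Main obstacle.} The delicate point is bookkeeping the two regimes ($a_1+\cdots+a_r>r$ versus $=r$) simultaneously through the induction while keeping all estimates uniform on a single polydisc: one must ensure the polydisc $D$ chosen for the induction hypothesis (centered at $(a_1,\dots,a_{r-2},a_{r-1}+a_r-1)$) is compatible, after the affine change of coordinates $t=s_{r-1}+s_r-1$, with the polydisc $D_0$ on which the tail estimate from \lemref{lem-negligible} holds, and that multiplying the $o(1)$ tail bound by the polynomial $Q$ (bounded on the compact $\overline D$) does not spoil it — this last is immediate, so the real care is just in tracking the exponent $r-s_r-\cdots-s_1$ and the product of linear factors through the substitution, which is routine but must be done precisely to match the statement of (c).
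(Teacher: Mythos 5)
There is a genuine gap in your induction step. After rearranging \eqref{second-step-star-tail} you treat the entire sum $\sum_{k\ge 1}$ as an ``$o(1)$ tail'', but \lemref{lem-negligible} only controls the sum over $k\ge k_0$, where $k_0$ is the smallest natural number with $(a_r+k_0,a_{r-1},\ldots,a_1)\in U_r$ --- and $k_0$ can be strictly larger than $1$ under the hypothesis $a_1+\cdots+a_r\ge r$, since the individual coordinates need not be large. For example, for $(a_1,a_2)=(2,0)$ one has $k_0=2$, and this is exactly the kind of point to which the lemma is applied in the proof of \thmref{thm-reg-exp}. The finitely many intermediate terms $\z^\star(s_r+k,s_{r-1},\ldots,s_1)_{\ge N}$ with $1\le k\le k_0-1$ are depth-$r$ star-tails centred at the points $(a_1,\ldots,a_{r-1},a_r+k)$, whose coordinate sum is $>r$; they are in general neither $o(1)$ nor holomorphic on your polydisc $D_0$ (in the example above, the $k=1$ term has a pole along $s_2+1=1$, which passes through the centre and is not cleared by your $P=(s_r-1)Q$). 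So with your choice of $P$ even part (a) fails once $k_0>1$, and your single induction on $r$ cannot reach these terms, because they live in depth $r$, not $r-1$.

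The paper closes this loop with a \emph{double} induction: on the depth $r$ and, secondarily, on $k_0$. Passing from $(a_1,\ldots,a_r)$ to $(a_1,\ldots,a_{r-1},a_r+k)$ strictly decreases $k_0$, so the inner induction hypothesis (in its case (b), the coordinate sum now being $>r$) supplies a polynomial $Q$ with $\|Q\,\z^\star(s_r+k,\ldots,s_1)_{\ge N}\|_D=o(1)$ for each $1\le k\le k_0-1$; this $Q$ must then appear as an extra factor of the final polynomial $P$, alongside $(s_r-1)$ and the factor coming from the depth-$(r-1)$ hypothesis applied to $\z^\star(s_r+s_{r-1}-1,s_{r-2},\ldots,s_1)_{\ge N}$. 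The rest of your argument --- the substitution $t=s_{r-1}+s_r-1$, the identification of the exponent $r-s_r-\cdots-s_1$ and of the linear factors, and the split into cases (b) and (c) --- matches the paper and is correct; your base cases $r=0,1$ are also fine, since for $r=1$ the hypothesis forces $a_1\ge 1$ and hence $k_0=1$.
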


\begin{proof}
We prove this by double induction, firstly on the depth $r$
and then on the smallest integer $k_0 \ge 0$ such that
$(a_r+k_0,a_{r-1},\ldots,a_1) \in U_r$. The case when $r=0$
is easily done as in this case we are in the case b)
and therefore we have the desired result with $P(\varnothing) :=1$.

Now for $r\ge 1$, from our discussion above we know that the sequence of meromorphic
functions $(v_N)_{N \ge 2}$ with
$$
v_N :=\z^\star(s_r+s_{r-1}-1, s_{r-2},\ldots,s_1)_{\ge N}
-\sum_{k= 0}^{k_0-1}(-1)^k \frac{(s_r-1)_{k+1}}{(k+1)!} \z^\star(s_r+k,s_{r-1},\ldots,s_1)_{\ge N}
$$
is holomorphic in a neighbourhood $D$ of $(a_1,\ldots,a_r)$ and converges uniformly to $0$
on $D$, as $N$ tends to $\infty$. It is therefore enough to prove our result for the functions
$\z^\star(s_r+s_{r-1}-1, s_{r-2},\ldots,s_1)_{\ge N}$ and $\z^\star(s_r+k,s_{r-1},\ldots,s_1)_{\ge N}$
for each $k=1,\ldots,k_0-1$. Since $a_1+\cdots+a_r+k > r$ for each $k=1,\ldots,k_0-1$,
by the induction hypothesis for depth $r$ and $k_0-k$, we have a desired polynomial
$Q(s_1,\ldots,s_r)$ such that
$$
\|Q(s_1,\ldots,s_r) \z^\star(s_r+k,\ldots,s_1)_{\ge N}\|_D=o(1)
$$
as $N \to \infty$, for each $k=1,\ldots,k_0-1$. On the other hand, the induction hypothesis
for depth $r-1$ applies to the function $\z^\star(s_r+s_{r-1}-1, s_{r-2},\ldots,s_1)_{\ge N}$
around the point $(a_r+a_{r-1}-1, a_{r-2},\ldots,a_1)$ and yields a polynomial
$R(s_1,\ldots,s_{r-2},s_{r-1}+s_r)$ with the desired property. We choose
$P(s_1,\ldots,s_r)$ to be $(s_1-1)Q(s_1,\ldots,s_r) R(s_1,\ldots,s_{r-2},s_{r-1}+s_r)$
to complete the proof of \lemref{lem-star-tail-estimate}.
\end{proof}

\begin{proof}[Proof of \thmref{thm-reg-exp}]
First note that $(a_1,\ldots,a_r) \in \overline{U_r}$ implies
$(a_1,\ldots,a_i) \in \overline{U_i}$ for $i=1,\ldots,r$.
Hence applying \lemref{lem-star-tail-estimate} to each of
$\z^\star(s_i,\ldots,s_1)_{\ge N}$ for $i=1,\ldots,r$, we can find a common
polynomial $P_1(s_1,\ldots,s_r)$ with the desired property
in an open polydisc $D$ of $(a_1,\ldots,a_r)$. Further, we can find a common
polynomial $P_2(s_1,\ldots,s_r)$ such that for each $i=0,\ldots,r-1$, the function
$P_2(s_1,\ldots,s_r) \z(s_{i+1},\ldots,s_r)$ is holomorphic on $D$. Therefore combining
\lemref{lem-star-tail-estimate} with \eqref{comb-form-1}, we get that for
$P(s_1,\ldots,s_r)=P_1(s_1,\ldots,s_r)P_2(s_1,\ldots,s_r)$,
the sequence of holomorphic functions $(u_N)_{N \ge 2}$ with
$$
u_N :=P(s_1,\ldots,s_r) \left(\z(s_1,\ldots,s_r)_{< N} - \sum_{i \in I}
(-1)^i \frac{N^{i-s_i-\cdots-s_1}\z(s_{i+1},\ldots,s_r)}
{(s_i-1)(s_i+s_{i-1}-2) \cdots (s_i+\cdots+s_1-i)} \right)
$$
converges uniformly to $0$ as $N$ tends to $\infty$. Note that for $i \in I$,
the Taylor expansion of $N^{i-s_i-\cdots-s_1}$ at $(a_1,\ldots,a_i)$ is
$$
\sum_{k_1,\ldots,k_i \ge 0} \frac{(-\log N)^{k_1+\cdots+k_i}}{k_1!\cdots k_i!}
(s_1-a_1)^{k_1} \cdots (s_i-a_i)^{k_i}.
$$
Hence \lemref{lem-conv} applies to the sequence of functions $(u_N)_{N \ge 2}$
and we get that the formal power series $P(s_1,\ldots,s_r)v$,
where $v$ is the formal power series \eqref{reg-multi-zeta}, converges on $D$ and
is equal to
$$
P(s_1,\ldots,s_r) \sum_{i \in I} \frac{(-1)^i \z(s_{i+1},\ldots,s_r)}
{(s_i-1)(s_i+s_{i-1}-2) \cdots (s_i+\cdots+s_1-i)}
$$
on $D$. Hence $v$ converges on $D$ and since $\z^\reg_{(a_1, \ldots, a_r)} (s_{1},\ldots,s_r)$
denotes its sum on $D$, we therefore have \eqref{reg-exp} on $D$.
\end{proof}

\subsection{An inversion process}\label{sec-inv}
In this section, we discuss inversion of an upper triangular matrix of rational
functions which we have encountered in the previous section. This inversion process
is going to play an important role in deriving a Laurent type expansion
of $\z(s_1,\ldots,s_r)$ around any integral point in $\overline{U_r}$ (see Section \ref{sec-exp-boundary}).

For that let $r \ge 0$ be an integer and $I$ be a subset of $\{0,1,\ldots,r\}$.
Let ${\bf A}=(a_{i,j})_{i,j \in I}$ be the upper triangular matrix
of type $I \times I$, with entries from the field of rational functions
$\Q(X_1,\ldots,X_r)$, defined as follows :
\begin{equation}\label{mat-A-inv}
a_{i,j} :=
\begin{cases}
0 & \text{for } i>j,\\
\prod_{m=i+1}^j (X_m+ \cdots + X_j)^{-1} & \text{for } i\le j.
\end{cases}
\end{equation}
Note that for any positive real numbers $x_1,\ldots,x_r$ and $i \le j$, we have
\begin{equation}\label{a-ij}
a_{i,j}(x_1,\ldots,x_r) = \int_{A_{i,j}}
t_{i+1}^{x_{i+1}-1} \cdots t_{j}^{x_{j}-1} dt_{i+1} \cdots dt_{j},
\end{equation}
where for $i \le j$, $A_{i,j} := \{(t_{i+1},\ldots,t_{j}) \in [0,1]^{j-i}  :
t_{i+1}>\cdots>t_{j}\}$. Similarly for $i \le j$, let
$$
B_{i,j} :=\{(t_{i+1},\ldots,t_{j}) \in [0,1]^{j-i}  : \text{for }
i < m < j, t_m > t_{m+1} \text{ if } m \notin I
\text{ and } t_m < t_{m+1} \text{ if } m \in I\}.
$$
There exists a unique rational function $b_{i,j}$ in $\Q(X_1,\ldots,X_r)$
such that for any positive real numbers $x_1,\ldots,x_r$, we have
\begin{equation}\label{b-ij}
b_{i,j}(x_1,\ldots,x_r) = \int_{B_{i,j}}
t_{i+1}^{x_{i+1}-1} \cdots t_{j}^{x_{j}-1} dt_{i+1} \cdots dt_{j}.
\end{equation}

\begin{prop}\label{prop-inverse}
The matrix ${\bf A}^{-1}$ is given by
$(a'_{i,j})_{i,j \in I}$, where
\begin{equation}\label{mat-A-inverse}
a'_{i,j}=
\begin{cases}
0 & \text{for } i>j,\\
(-1)^{|I \cap\{i+1,\ldots,j\}|} b_{i,j} & \text{for } i\le j.
\end{cases}
\end{equation}
\end{prop}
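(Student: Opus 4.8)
The plan is to verify directly that the matrix with entries $a'_{i,j}$ given by \eqref{mat-A-inverse} is a right inverse of ${\bf A}$; since both matrices are of finite size $I \times I$, a right inverse is automatically a two-sided inverse, so this suffices. Concretely, I would fix $i \le k$ in $I$ and compute the $(i,k)$ entry of the product ${\bf A}\,(a'_{i,j})$, namely
\begin{equation*}
\sum_{i \le j \le k} a_{i,j}\, a'_{j,k}
= \sum_{i \le j \le k} (-1)^{|I \cap \{j+1,\ldots,k\}|}\, a_{i,j}\, b_{j,k},
\end{equation*}
and show it equals $\delta_{i,k}$. Because a rational function in $\Q(X_1,\ldots,X_r)$ is determined by its values at all tuples of positive reals, it is enough to prove this identity after evaluating at arbitrary positive $x_1,\ldots,x_r$, where the integral representations \eqref{a-ij} and \eqref{b-ij} become available.

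The heart of the argument is then a Fubini/inclusion–exclusion computation with the integral representations. Using \eqref{a-ij} and \eqref{b-ij}, the summand (evaluated at $x_1,\ldots,x_r$) becomes
\begin{equation*}
(-1)^{|I \cap \{j+1,\ldots,k\}|} \int_{A_{i,j} \times B_{j,k}}
t_{i+1}^{x_{i+1}-1}\cdots t_{k}^{x_{k}-1}\, dt_{i+1}\cdots dt_k,
\end{equation*}
where on $A_{i,j}$ the variables $t_{i+1},\ldots,t_j$ are strictly decreasing and on $B_{j,k}$ the variables $t_{j+1},\ldots,t_k$ satisfy the up/down pattern prescribed by membership of $\{j+1,\ldots,k-1\}$ in $I$. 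The key combinatorial point is that, for the full variable block $t_{i+1},\ldots,t_k$, the index $j$ plays the role of the ``last position $m$ with $i \le m \le k$ at which the comparison $t_m$ vs.\ $t_{m+1}$ (with the convention $t_m > t_{m+1}$ forced for $m=j$, since $A_{i,j}$ is a pure descending simplex) can be anything'' — more precisely, summing over $j$ with the sign $(-1)^{|I \cap \{j+1,\ldots,k\}|}$ realises the standard inclusion–exclusion identity that collapses a sum over chains of comparisons. I would make this precise by noting that, on $[0,1]^{k-i}$ with all $t_\ell$ distinct (a full-measure condition), decomposing according to the largest $j \in [i,k]$ such that $t_i > \cdots$... — rather, decomposing the simplex $A_{i,k}$ (where $t_{i+1} > \cdots > t_k$) by conditioning on the comparison pattern of the $B$-variables gives a telescoping cancellation, leaving only the $j=k$ term when $i=k$ is forced, i.e.\ $\delta_{i,k}$.

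I would organise the telescoping as an induction on $k-i$: the case $k=i$ is the single term $a'_{i,i}=1$ (empty product, $|I\cap\varnothing|=0$), giving $1=\delta_{i,i}$; for $k>i$, split off the $j=k$ term $a_{i,k}\cdot 1 = a_{i,k}$ and show the remaining sum $\sum_{i \le j < k}(-1)^{|I\cap\{j+1,\ldots,k\}|} a_{i,j} b_{j,k}$ equals $-a_{i,k}$, which reduces to a one-variable integral identity in $t_k$: integrating $t_k^{x_k-1}$ over the appropriate sub-interval determined by $t_{k-1}$ and whether $k-1 \in I$, and recognising the resulting expression as a combination of $a_{i,j}$'s, so that the induction hypothesis for the block ending at $k-1$ applies. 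The main obstacle I anticipate is purely bookkeeping: correctly tracking the up/down pattern of the $B$-variables across the splice point $j$ and checking that the sign $(-1)^{|I \cap \{j+1,\ldots,k\}|}$ matches exactly what the inclusion–exclusion on nested comparison regions produces — in particular handling whether $j \in I$ or $j \notin I$ uniformly. Once the sign accounting is pinned down, the analytic content is just iterated integration of monomials over boxes, which is routine.
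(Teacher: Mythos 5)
Your setup coincides with the paper's: reduce to checking $\sum_{j\in I,\, i\le j\le k} a_{i,j}a'_{j,k}=\delta_{i,k}$ for $i\le k$ in $I$ (note the sum must run over $j\in I$ only, since the matrices are of type $I\times I$), specialise the indeterminates to positive reals, and use the integral representations \eqref{a-ij} and \eqref{b-ij} so that for $i<k$ the claim becomes the vanishing of $\sum_{j\in I,\,i\le j\le k}(-1)^{|I\cap\{j+1,\ldots,k\}|}\int_{A_{i,j}\times B_{j,k}}t_{i+1}^{x_{i+1}-1}\cdots t_k^{x_k-1}\,dt_{i+1}\cdots dt_k$. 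The gap is that this vanishing --- which is the entire content of the proposition --- is never actually established. You offer two sketches: a decomposition ``according to the largest $j$'' that you abandon mid-sentence, and a telescoping induction on $k-i$ in which $t_k$ is integrated out. The second faces concrete obstructions you do not address: the sum ranges over $j\in I$, not over consecutive integers, so stepping from $k$ to $k-1$ need not land on an admissible index; integrating out $t_k$ treats the term $j=k-1$ (where $t_k$ is unconstrained in $B_{k-1,k}$) differently from the terms $j<k-1$ (where $t_k$ is compared with $t_{k-1}$ according to whether $k-1\in I$); and the integration changes the exponent of $t_{k-1}$ from $x_{k-1}-1$ to $x_{k-1}+x_k-1$ (or produces a difference of two such terms when $k-1\in I$), so what remains is not an instance of the same identity over a shorter interval and no induction hypothesis applies as stated. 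Calling this ``purely bookkeeping'' understates the issue: the bookkeeping is the proof.

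What makes the cancellation work in the paper is a parity and matching argument rather than a telescoping one. Partition the admissible indices into $I_0$ and $I_1$ according to the parity of $|I\cap\{j+1,\ldots,k\}|$; consecutive elements of $I$ in $[i,k]$ alternate between the two classes, and this interlacing forces the regions $A_{i,j}\times B_{j,k}$ for $j$ in a fixed class to be pairwise disjoint (between two such $j<j'$ there is an intermediate $j_3\in I$ at which the two regions impose opposite inequalities on $t_{j_3},t_{j_3+1}$). The signed sum is therefore the difference of the integrals over $\sqcup_{j\in I_0}(A_{i,j}\times B_{j,k})$ and $\sqcup_{j\in I_1}(A_{i,j}\times B_{j,k})$, and \lemref{lem-set-measure} shows these two unions coincide up to a set of measure zero: a point ${\bf t}$ with distinct coordinates lying in $A_{i,j}\times B_{j,k}$ also lies in $A_{i,j^+}\times B_{j^+,k}$ when $t_j>t_{j+1}$ (or $j=i$) and in $A_{i,j^-}\times B_{j^-,k}$ when $t_j<t_{j+1}$ (or $j=k$), where $j^{\pm}$ are the neighbours of $j$ in $I$ and belong to the opposite parity class. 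Replacing your telescoping step by this matching closes the gap.
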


\begin{proof}
Since our matrices are upper triangular, we only have to prove that,
for $i \le k$ in $I$,
$$
\sum_{i \le j \le k \atop j \in I} a_{i,j} a'_{j,k}
= \begin{cases}
  1 & \text{if } i=k,\\
  0 & \text{if } i \neq k.
  \end{cases}
$$
In order to prove this equality between rational functions, it is enough
to prove it after replacing the indeterminates $X_1,\ldots,X_r$ by any
positive real numbers $x_1,\ldots,x_r$. We can therefore prove
it by using the respective integral representations of
$a_{i,j}(x_1,\ldots,x_r)$ and $a'_{j,k}(x_1,\ldots,x_r)$. 
Note that for all $i \in I$, $a_{i,i}=a'_{i,i}=1$.
This completes the proof if $i=k$. Now suppose $i \neq k$.
In this case
$$
\sum_{i \le j \le k \atop j \in I} (a_{i,j} a'_{j,k})(x_1,\ldots,x_r)
=\sum_{i \le j \le k \atop j \in I} (-1)^{|I \cap\{j+1,\ldots,k\}|}
\int_{A_{i,j}\times B_{j,k}} t_{i+1}^{x_{i+1}-1} \cdots t_{k}^{x_{k}-1}
dt_{i+1} \cdots dt_{k}.
$$
Define
$$
I_0=I_0(i,k) :=\{j \in I  : i \le j \le k \text{ and }
|I \cap\{j+1,\ldots,k\}| \text{ is even}\}.
$$
Similarly let
$$
I_1=I_1(i,k) :=\{j \in I  : i \le j \le k \text{ and }
|I \cap\{j+1,\ldots,k\}| \text{ is odd}\}.
$$
Since $i \neq k$, $I_0,I_1$ are non-empty. Note that the elements of
$I_0$ and $I_1$ are interlacing as elements of $I$, i.e. for any
$\epsilon \in \{0,1\}$, between any two consecutive elements
$j_1,j_2 \in I_\epsilon$, there is a unique element $j_3 \in I_{1-\epsilon}$
such that $j_1 < j_3 < j_2$. Thus if $j, j'$ are distinct elements of
some $I_\epsilon$ (with $\epsilon \in \{0,1\}$), the sets
$A_{i,j}\times B_{j,k},A_{i,j'}\times B_{j',k}$ are disjoint. Hence
\begin{align*}
\sum_{i \le j \le k \atop j \in I} (a_{i,j} a'_{j,k})(x_1,\ldots,x_r)
= & \int_{\sqcup_{j \in I_0} (A_{i,j}\times B_{j,k})}
t_{i+1}^{x_{i+1}-1} \cdots t_{k}^{x_{k}-1} dt_{i+1} \cdots dt_{k}\\
& - \int_{\sqcup_{j \in I_1} (A_{i,j}\times B_{j,k})}
t_{i+1}^{x_{i+1}-1} \cdots t_{k}^{x_{k}-1} dt_{i+1} \cdots dt_{k}.
\end{align*}

Now we claim that, modulo the sets of measure zero,
$\sqcup_{j \in I_0} (A_{i,j}\times B_{j,k})
\equiv \sqcup_{j \in I_1} (A_{i,j}\times B_{j,k}),$
i.e. the symmetric difference between these two sets is a set of measure zero.
Our claim now follows from the following lemma and this will complete the
proof of \propref{prop-inverse}.
\end{proof}

\begin{lem}\label{lem-set-measure}
Let ${\bf t}=(t_{i+1},\ldots,t_{k})$ be an element of $[0,1]^{k-i}$
such that its coordinates are pairwise distinct. Then ${\bf t}
\in \sqcup_{j \in I_0} (A_{i,j}\times B_{j,k})$ if and only if
${\bf t} \in \sqcup_{j \in I_1} (A_{i,j}\times B_{j,k})$.
\end{lem}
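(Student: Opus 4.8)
The plan is to fix a point ${\bf t}=(t_{i+1},\ldots,t_{k})\in[0,1]^{k-i}$ with pairwise distinct coordinates and to describe explicitly the set
$$
J({\bf t}):=\{\,j\in I : i\le j\le k,\ {\bf t}\in A_{i,j}\times B_{j,k}\,\}.
$$
First I would record how membership in $A_{i,j}\times B_{j,k}$ depends only on the sign pattern of ${\bf t}$. For $i+1\le m\le k-1$ put $\eta_m:={+}$ if $t_m>t_{m+1}$ and $\eta_m:={-}$ if $t_m<t_{m+1}$ (this is the one place distinctness is used), and put $\pi_m:={-}$ if $m\in I$ and $\pi_m:={+}$ if $m\notin I$. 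Unwinding the definitions of $A_{i,j}$ and $B_{j,k}$, and noting that neither set imposes any relation between $t_j$ and $t_{j+1}$, one sees that for $j\in I$ with $i\le j\le k$,
$$
{\bf t}\in A_{i,j}\times B_{j,k}\iff \eta_m={+}\ \text{for}\ i+1\le m\le j-1\ \text{ and }\ \eta_m=\pi_m\ \text{for}\ j+1\le m\le k-1.
$$

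Next I would convert this into a single interval condition. Let $u$ be the least $m\in\{i+1,\ldots,k-1\}$ with $\eta_m={-}$, with $u:=k$ if there is none; and let $w$ be the greatest $m\in\{i+1,\ldots,k-1\}$ with $\eta_m\neq\pi_m$, with $w:=i$ if there is none. Then $i\le w$ and $u\le k$; by the very definitions of $u$ and $w$, the first condition above says exactly $j\le u$ and the second exactly $j\ge w$, so
$$
J({\bf t})=I\cap\{w,w+1,\ldots,u\},
$$
a block of consecutive elements of $I$, all lying in $\{i,\ldots,k\}$. Since $|I\cap\{m+1,\ldots,k\}|$ changes parity by exactly one as $m$ passes from one element of $I\cap\{i,\ldots,k\}$ to the next, the elements of $I\cap\{i,\ldots,k\}$ alternate between $I_0$ and $I_1$. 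Because the sets $A_{i,j}\times B_{j,k}$ with $j$ in a fixed $I_\varepsilon$ are pairwise disjoint (as already noted), for $\varepsilon\in\{0,1\}$ the point ${\bf t}$ lies in $\sqcup_{j\in I_\varepsilon}(A_{i,j}\times B_{j,k})$ exactly when $J({\bf t})\cap I_\varepsilon\neq\varnothing$; and a block $J({\bf t})$ of size at least $2$ meets both $I_0$ and $I_1$, while the empty block meets neither. Hence the lemma reduces to the claim that $J({\bf t})$ is never a singleton, and this is the step I expect to be the only genuinely delicate one.

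To prove that claim I would argue by cases. Suppose $J({\bf t})=\{j\}$; recall $i,k\in I$, $i<k$, and $j\in\{i,\ldots,k\}$. If $j=i$, then $i\in I\cap\{w,\ldots,u\}$ forces $w=i$, hence $\eta_m=\pi_m$ for every $m$ in range; but $k\notin J({\bf t})$ forces $u<k$, so $\eta_u={-}$, so $\pi_u={-}$, so $u\in I\cap\{w,\ldots,u\}=\{i\}$, contradicting $u>i$. The case $j=k$ is symmetric: $u=k$ forces $\eta_m={+}$ for every $m$ in range, $i\notin J({\bf t})$ forces $w>i$, so $\eta_w={+}$, so $\pi_w={-}$, so $w\in I\cap\{w,\ldots,u\}=\{k\}$, contradicting $w<k$. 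Finally, if $i<j<k$, then $w>i$ and $u<k$, so $w,u\in\{i+1,\ldots,k-1\}$, with $\eta_u={-}$ and $\eta_w\neq\pi_w$; if $w<u$ then $\eta_w={+}$ (as $w<u$) forces $\pi_w={-}$, and $\eta_u=\pi_u$ (as $u>w$) forces $\pi_u={-}$, so $w,u\in I\cap\{w,\ldots,u\}=\{j\}$, contradicting $w<u$; while if $w=u$ then $j=w=u\in I$, so $\pi_j={-}$, and yet $j=u$ gives $\eta_j={-}$ whereas $j=w$ gives $\eta_j\neq\pi_j={-}$, a contradiction. Thus $J({\bf t})$ is never a singleton, which by the reduction above completes the proof.
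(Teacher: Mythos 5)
Your proof is correct, but it takes a genuinely different route from the paper's. The paper argues locally: given ${\bf t}\in A_{i,j}\times B_{j,k}$ with $j\in I_\epsilon$, it directly exhibits one neighbouring index of $I$ --- namely $j^{+}$ if $j=i$, $j^{-}$ if $j=k$, and $j^{+}$ or $j^{-}$ according as $t_j>t_{j+1}$ or $t_j<t_{j+1}$ when $i<j<k$ --- for which ${\bf t}$ again lies in the corresponding product set; since consecutive elements of $I$ lie in opposite classes $I_\epsilon$ and $I_{1-\epsilon}$, the equivalence follows at once. You instead determine the entire witness set $J({\bf t})=\{j\in I: {\bf t}\in A_{i,j}\times B_{j,k}\}$ in closed form as $I\cap\{w,\ldots,u\}$, with $u$ and $w$ read off from the sign pattern of consecutive differences of ${\bf t}$, and reduce the lemma to the assertion that this block is never a singleton, which you verify by a three-case analysis. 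I checked the key points: the equivalence ${\bf t}\in A_{i,j}\times B_{j,k}$ iff $w\le j\le u$ is correct (crucially, neither factor constrains $t_j$ against $t_{j+1}$), the reduction via the alternation of $I_0$ and $I_1$ along $I$ is sound, and each singleton-exclusion case closes properly. Your route is longer but strictly more informative --- it identifies exactly which $j$ occur, not merely that a witness of the opposite parity exists --- whereas the paper's neighbour-swapping argument is the minimal step needed for the measure-zero claim in the proof of \propref{prop-inverse}.
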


\begin{proof}[Proof of \lemref{lem-set-measure}]
Let $j \in I$ be such that $i \le j \le k$. If $j \ne k$, we denote $j^{+}$
to be the element in $I$ next to $j$. If $j \ne i$, we denote $j^{-}$
to be the element in $I$ preceding $j$. Note that when $j \in I_{\epsilon}$,
we have $j^{-},j^{+} \in I_{1-\epsilon}$. 
Now let ${\bf t} \in A_{i,j}\times B_{j,k}$.
If $j=i$, then ${\bf t} \in A_{i,j^{+}}\times B_{j^{+},k}$. If $j=k$,
then ${\bf t} \in A_{i,j^{-}}\times B_{j^{-},k}$. If $i<j<k$, then
${\bf t}$ belongs to $A_{i,j^{+}}\times B_{j^{+},k}$ when $t_j>t_{j+1}$,
and to $A_{i,j^{-}}\times B_{j^{-},k}$ when $t_j<t_{j+1}$.
This completes the proof of \lemref{lem-set-measure}.
\end{proof}

\subsection{Laurent type expansion for general integral points in $\overline{U_r}$}\label{sec-exp-boundary}
We recall that for a general point $(a_1,\ldots,a_r) \in \Z^r \cap \overline{U_r}$,
$I=I(a_1,\ldots,a_r)$ denotes the set of indices $i$ such that $0 \le i \le r$ and
$a_1+\cdots+a_i=i$. Note that the points of the form $(a_{i+1},\ldots,a_r)$ belong
to $\Z^{r-i} \cap \overline{U_{r-i}}$ for $i \in I$. Writing \thmref{thm-reg-exp}
for these points, we get a triangular system expressing the regularised
multiple zeta functions around them, in terms of the multiple
zeta functions themselves. This triangular system has to be inverted
to get a Laurent type expansion of $\z(s_1,\ldots,s_r)$
around $(a_1,\ldots,a_r)$.

\begin{thm}\label{thm-reg-exp-inverse}
Under the hypotheses of \thmref{thm-reg-exp}, we have
$$
\z(s_1,\ldots,s_r) = \sum_{i \in I} (-1)^{i-|I_i|} f_{i}(s_1-1,\ldots,s_i-1)
\z^\reg_{(a_{i+1}, \ldots, a_r)} (s_{i+1},\ldots,s_r),
$$
where for $i \in I$, $I_i=I \cap \{1,\ldots,i\}$ and
$f_{i}$ is the rational function is $\Q(X_1,\ldots,X_i)$
defined by the following property $:$ for any positive 
real numbers $x_1,\ldots,x_i$,
$$
f_{i}(x_1,\ldots,x_i)  = \int_{\Delta_{i}}
t_{1}^{x_{1}-1} \cdots t_{i}^{x_{i}-1} dt_{1} \cdots dt_{i},
$$
where $\Delta_{i} :=\{(t_{1},\ldots,t_{i}) \in [0,1]^{i}  :\text{for }
0 < j < i, t_j > t_{j+1} \text{ if } j \notin I
\text{ and } t_j < t_{j+1} \text{ if } j \in I\}$.
\end{thm}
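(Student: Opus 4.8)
\textbf{Proof proposal for \thmref{thm-reg-exp-inverse}.}

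The plan is to combine \thmref{thm-reg-exp} with the matrix inversion established in \propref{prop-inverse}. First I would set up the linear system: for each $i \in I$, apply \thmref{thm-reg-exp} to the point $(a_{i+1},\ldots,a_r)$, which lies in $\overline{U_{r-i}} \cap \Z^{r-i}$. Since the shifted index set for this point is exactly $\{j - i : j \in I, j \ge i\}$, formula \eqref{reg-exp} reads
$$
\z^\reg_{(a_{i+1},\ldots,a_r)}(s_{i+1},\ldots,s_r) = \sum_{j \in I, j \ge i} \frac{(-1)^{j-i}\,\z(s_{j+1},\ldots,s_r)}{(s_j-1)(s_j+s_{j-1}-2)\cdots(s_j + \cdots + s_{i+1} - (j-i))}.
$$
Viewing the vector $(\z^\reg_{(a_{i+1},\ldots,a_r)}(s_{i+1},\ldots,s_r))_{i \in I}$ and the vector $(\z(s_{j+1},\ldots,s_r))_{j \in I}$ as related by an upper triangular $I \times I$ matrix, I would identify this matrix, after the substitution $X_m = s_m$, with the matrix $\mathbf{A}$ of \eqref{mat-A-inv} up to the sign factors $(-1)^{j-i}$; more precisely, the $(i,j)$ entry is $(-1)^{j-i} a_{i,j}$ with $a_{i,j}$ the product $\prod_{m=i+1}^{j}(X_m + \cdots + X_j)^{-1}$ (reindexed to match the $s_j + \cdots + s_{i+1} - (j-i) = s_j + \cdots + s_{i+1}$ pattern, using that the telescoping denominators are exactly the partial sums appearing in \eqref{mat-A-inv}).

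Next I would invert this system. Absorbing the signs into diagonal matrices, the inverse of $((-1)^{j-i} a_{i,j})$ is $((-1)^{j-i} a'_{i,j})$ where $a'_{i,j}$ is given by \propref{prop-inverse}, namely $a'_{i,j} = (-1)^{|I \cap \{i+1,\ldots,j\}|} b_{i,j}$. Therefore
$$
\z(s_{i+1},\ldots,s_r) = \sum_{j \in I, j \ge i} (-1)^{j-i}(-1)^{|I \cap \{i+1,\ldots,j\}|} b_{i,j}(s_1,\ldots,s_r)\, \z^\reg_{(a_{j+1},\ldots,a_r)}(s_{j+1},\ldots,s_r).
$$
Taking $i = 0$ (which lies in $I$ since the empty sum $a_1 + \cdots + a_0 = 0$ holds), this gives
$$
\z(s_1,\ldots,s_r) = \sum_{j \in I} (-1)^{j}(-1)^{|I \cap \{1,\ldots,j\}|} b_{0,j}(s_1,\ldots,s_r)\, \z^\reg_{(a_{j+1},\ldots,a_r)}(s_{j+1},\ldots,s_r).
$$
It remains to match notation: $|I \cap \{1,\ldots,j\}| = |I_j|$, so the sign is $(-1)^{j - |I_j|}$ (since $(-1)^{2|I_j|} = 1$), and I would check that $b_{0,j}$, defined by the integral over $B_{0,j}$ in \eqref{b-ij}, coincides with $f_j(s_1-1,\ldots,s_j-1)$ evaluated via the integral over $\Delta_j$ — indeed $B_{0,j}$ and $\Delta_j$ are the same region of $[0,1]^j$ (both prescribe $t_m > t_{m+1}$ for $m \notin I$ and $t_m < t_{m+1}$ for $m \in I$), and the exponent shift $x_m - 1$ versus $x_m - 1$ matches, so $f_j(X_1,\ldots,X_j) = b_{0,j}(X_1,\ldots,X_r)$ as a rational function independent of $X_{j+1},\ldots,X_r$.

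The main obstacle I anticipate is purely bookkeeping: carefully aligning the three indexing conventions (the running denominators $s_j + \cdots + s_{i+1}$ in the shifted \eqref{reg-exp}, the product $X_m + \cdots + X_j$ in \eqref{mat-A-inv}, and the integration variables in $A_{i,j}$, $B_{i,j}$, $\Delta_i$), and tracking the sign factors through the inversion so that $(-1)^{j-i}$ from the telescoping and $(-1)^{|I \cap \{i+1,\ldots,j\}|}$ from \propref{prop-inverse} combine correctly into $(-1)^{i - |I_i|}$ in the final formula. Once the dictionary between $\mathbf{A}$, $\mathbf{A}^{-1}$ and the zeta/regularised-zeta systems is set up precisely, the theorem follows immediately by reading off row $i=0$ of the inverted system; no analytic input beyond \thmref{thm-reg-exp} is needed, since the identity is an equality of meromorphic functions obtained by a finite linear-algebraic manipulation of already-established identities.
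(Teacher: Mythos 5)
Your proposal is correct and follows essentially the same route as the paper: write \eqref{reg-exp} for each point $(a_{i+1},\ldots,a_r)$, $i\in I$, as an upper triangular system with matrix ${\bf A}(s_1-1,\ldots,s_r-1)$ from \eqref{mat-A-inv} (the paper absorbs the signs by putting $(-1)^{r-i}$ into the entries of the column vectors rather than conjugating by diagonal sign matrices, but this is the same bookkeeping), invert via \propref{prop-inverse}, and read off the row $i=0$. The only slip is notational: the substitution must be $X_m=s_m-1$ (not $X_m=s_m$) so that $\prod_{m=i+1}^{j}(X_m+\cdots+X_j)$ reproduces the denominators $(s_j-1)\cdots(s_j+\cdots+s_{i+1}-(j-i))$, consistent with your final answer $f_i(s_1-1,\ldots,s_i-1)$.
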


Note that in the above theorem $I_0=\varnothing$ and $f_0=1$.

\begin{proof}[Proof of \thmref{thm-reg-exp-inverse}]
For the proof we rewrite \eqref{reg-exp} as
\begin{equation}\label{reg-exp-rewrite}
(-1)^r \z^\reg_{(a_1, \ldots, a_r)} (s_1,\ldots,s_r)
= \sum_{i \in I}
\frac{(-1)^{r-i} \z(s_{i+1},\ldots,s_r)}{(s_i-1)(s_i+s_{i-1}-2) \cdots (s_i+\cdots+s_1-i)}.
\end{equation}
Now these identities for all points of the form
$(a_{i+1},\ldots,a_r)$ with $i \in I$ can be written as the
matrix identity
\begin{equation}\label{reg-exp-mat}
{\bf V}^\reg={\bf A}(s_1-1,\ldots,s_r-1){\bf V},
\end{equation}
where ${\bf A}$ is the upper triangular matrix of type $I \times I$
with coefficients in $\Q(X_1,\ldots,X_r)$ defined by \eqref{mat-A-inv}
in Section \ref{sec-inv}, ${\bf V}^\reg$ and ${\bf V}$ are the column matrices where
entries are indexed by $I$, the entry of index $i$ being
$(-1)^{r-i}\z^\reg_{(a_{i+1}, \ldots, a_r)} (s_{i+1},\ldots,s_r)$ and
$(-1)^{r-i}\z (s_{i+1},\ldots,s_r)$ respectively.
We have seen in Section \ref{sec-inv} that the matrix ${\bf A}$ is invertible
and the entries of ${\bf A}^{-1}$ are given in \eqref{mat-A-inverse}.
So we get
\begin{equation}\label{reg-exp-mat-inverse}
{\bf V}={\bf A}^{-1}(s_1-1,\ldots,s_r-1){\bf V}^\reg.
\end{equation}
Comparing the first entries of these two column matrices, we get
$$
(-1)^r \z(s_1,\ldots,s_r)
= \sum_{i \in I}(-1)^{r-i} a'_{0,i}(s_1-1, \ldots,s_r-1)
\z^\reg_{(a_{i+1}, \ldots, a_r)} (s_{i+1},\ldots,s_r).
$$
Now $a'_{0,i}(X_1, \ldots,X_r)=(-1)^{|I_i|}b_{0,i}(X_1, \ldots,X_r)
=(-1)^{|I_i|}f_{i}(X_1, \ldots,X_r)$,
hence the theorem follows.
\end{proof}

So \thmref{thm-reg-exp-inverse} yields a Laurent type expansion of $\z(s_1,\ldots,s_r)$
around any integer point $(a_1,\ldots,a_r)$ in $\overline{U_r}$ and
\eqref{multi-zeta-conv}, \eqref{multi-zeta-exp} and \eqref{gen-multi-zeta-exp}
can be considered as special cases of \thmref{thm-reg-exp-inverse}.
Generally the rational functions $f_{i}$ do not have simple expressions
as in \thmref{thm-gen-multi-zeta-exp}. This is shown by the following
example.

\begin{exm}\rm
When $r=3$ and $(a_1,a_2,a_3)=(2,0,1)$, we have $I=\{0,2,3\}$ and
\begin{equation*}
\begin{split}
\z^\reg_{(2,0,1)} (s_1,s_2,s_3) = & \ \z(s_1,s_2,s_3) +
\frac{1}{(s_2-1) (s_1+s_2-2)} \z(s_3)\\
&- \frac{1}{(s_3-1) (s_2+s_3-2)(s_1+s_2+s_3-3)}.
\end{split}
\end{equation*}
We therefore have,
\begin{equation*}
\begin{split}
\z(s_1,s_2,s_3) = & \ \z^\reg_{(2,0,1)} (s_1,s_2,s_3)-
\frac{1}{(s_2-1) (s_1+s_2-2)}\left(\z^\reg_{(1)} (s_3)+\frac{1}{s_3-1}\right)\\
& + \frac{1}{(s_3-1)(s_2+s_3-2)(s_1+s_2+s_3-3)}\\
= & \ \z^\reg_{(2,0,1)} (s_1,s_2,s_3)-
\frac{1}{(s_2-1)(s_1+s_2-2)}\z^\reg_{(1)} (s_3)\\
& - \frac{s_1+2s_2+s_3-4}{(s_2-1)(s_1+s_2-2)(s_2+s_3-2)(s_1+s_2+s_3-3)}.
\end{split}
\end{equation*}
Note that for $i \in I$,
$$
i-|I_i|= \begin{cases}
         0 & \text{ when } i=0,\\
         1 & \text{ when } i=2,3,
         \end{cases}
$$
and that for any positive real numbers $x_1,x_2,x_3$,
\begin{align*}
& \int\limits_{(t_1,t_2,t_3) \in [0,1]^3 \atop t_1>t_2, t_2<t_3}
t_1^{x_1-1} t_2^{x_2-1} t_3^{x_3-1} dt_1 dt_2 dt_3\\
& = \int\limits_{1>t_1>t_3>t_2>0} t_1^{x_1-1} t_2^{x_2-1} t_3^{x_3-1} dt_1 dt_2 dt_3
+\int\limits_{1>t_3>t_1>t_2>0} t_1^{x_1-1} t_2^{x_2-1} t_3^{x_3-1} dt_1 dt_2 dt_3\\
& = \frac{1}{x_2(x_2+x_3)(x_1+x_2+x_3)} + \frac{1}{x_2(x_1+x_2)(x_1+x_2+x_3)}\\
& = \frac{x_1+2x_2+x_3}{x_2(x_1+x_2)(x_2+x_3)(x_1+x_2+x_3)}.
\end{align*}
\end{exm}

\subsection{Stuffle product formula for regularised multiple zeta functions}

We begin with the notion of shuffling and stuffling of two non-negative integers $p,q$.
We define a \textit{stuffling} of $p$ and $q$ to be a triple $(r,A,B)$ of sets
such that $|A|=p, \ |B|=q$ and $A \cup B = \{1,\ldots,r\}$.
We then have $\max(p,q) \le r \le p+q$. We call $r$ to be the
{\it depth of the stuffling}. Such a stuffling is called a \textit{shuffling}
when $A$ and $B$ are disjoint, i.e. when $r = p+q$. Since in case of shuffling,
$r$ is uniquely determined by $|A|$ and $|B|$, we shall denote such
a shuffling just by $(A,B)$.

Let $(a_1,\ldots,a_p)$ and $(b_1,\ldots,b_q)$ be two sequences of
complex numbers and $(r,A,B)$ be a stuffling of $p$ and $q$.
Let $\sigma$ and $\tau$ denote the unique increasing bijections
from $A \to \{1,\ldots,p\}$ and $B \to\{1,\ldots,q\}$ respectively.
Let us define a sequence of complex numbers $(c_1,\ldots,c_r)$ as follows :
$$
c_i  := \begin{cases}
a_{\sigma(i)} & \text{when } i \in A \setminus B,\\
b_{\tau(i)} & \text{when } i \in B \setminus A, \\
a_{\sigma(i)} + b_{\tau(i)} & \text{when } i \in A \cap B.
\end{cases}
$$
We call $(c_1,\ldots,c_r)$, the \textit{sequence deduced from $(a_1,\ldots,a_p)$ and
$(b_1,\ldots,b_q)$ by the stuffling $(r,A,B)$}. Clearly,
if $(a_1,\ldots,a_p) \in \overline{U_p}$ and $(b_1,\ldots,b_q) \in \overline{U_q}$, then
$(c_1,\ldots,c_r) \in \overline{U_r}$.

It is well known that the multiple zeta functions satisfy the
{\it stuffle product formula}, i.e. product of two multiple zeta
functions can be expressed as sum of other multiple zeta functions,
where the sum runs over all possible stufflings of the coordinates
of the initial two multiple zeta functions. The simplest example
of this phenomenon can be given as the following equality of meromorphic
functions on $\C^2$  :
$$
\z(s_1) \z(s_2)=\z(s_1,s_2)+\z(s_2,s_1)+\z(s_1+s_2),
$$
which is due to Euler. Using \thmref{thm-reg-exp}, we now prove that the
regularised multiple zeta functions around integral points in the closure of the
domain of convergence also satisfy the stuffle product formula. More precisely,
we have the following theorem.

\begin{thm}\label{thm-reg-stuffle}
Let $p,q \ge 0$ be integers and $(a_1,\ldots,a_p) \in \overline{U_p} \cap \Z^p$,
$(b_1,\ldots,b_q) \in \overline{U_q} \cap \Z^q$. Then
we have the following equality of meromorphic functions on $\C^{p+q}:$
\begin{equation}\label{reg-stuffle}
\z^\reg_{(a_1, \ldots, a_p)} (s_1,\ldots,s_p) \cdot
\z^\reg_{(b_1, \ldots, b_q)} (t_1,\ldots,t_q)
=\sum_{(r,A,B)} \z^\reg_{(c_1, \ldots, c_r)} (u_1,\ldots,u_r),
\end{equation}
where the sum is over the stufflings $(r,A,B)$ of $p$ and $q$, and
$(u_1,\ldots,u_r),(c_1, \ldots, c_r)$ are the sequences deduced from
$(s_1,\ldots,s_p),(t_1,\ldots,t_q)$ and $(a_1, \ldots, a_p),(b_1, \ldots, b_q)$
respectively, by this stuffling.
\end{thm}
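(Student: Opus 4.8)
The plan is to derive \eqref{reg-stuffle} from the elementary stuffle identity for the \emph{truncated} multiple zeta functions, combined with the multiplicativity of the regularisation process on an appropriate subalgebra of $\cS$. The starting point is the identity, valid for every integer $N\ge 1$ as an equality of holomorphic functions on $\C^{p+q}$,
$$
\z(s_1,\ldots,s_p)_{<N}\cdot\z(t_1,\ldots,t_q)_{<N}=\sum_{(r,A,B)}\z(u_1,\ldots,u_r)_{<N},
$$
where the sum runs over the stufflings of $p$ and $q$ and $(u_1,\ldots,u_r)$ is the sequence deduced from $(s_1,\ldots,s_p)$ and $(t_1,\ldots,t_q)$ by the stuffling $(r,A,B)$. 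This is immediate from \eqref{zeta-initial}: when the two finite sums are multiplied out, each pair of strictly decreasing chains $N>n_1>\cdots>n_p>0$ and $N>m_1>\cdots>m_q>0$ gives, after sorting the distinct values occurring among the $n_i$ and the $m_j$ (a coincidence $n_i=m_j$ producing an index of $A\cap B$, at which the two exponents add), precisely one term of the stuffle sum.

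Next I would take Taylor expansions of both sides at $\mathbf{a}:=(a_1,\ldots,a_p,b_1,\ldots,b_q)$. Since its two factors involve disjoint sets of variables, the Taylor series of the left side at $\mathbf{a}$ is the product of the Taylor series of $\z(s_1,\ldots,s_p)_{<N}$ at $(a_1,\ldots,a_p)$ and of $\z(t_1,\ldots,t_q)_{<N}$ at $(b_1,\ldots,b_q)$; using $n^{-s}=n^{-c}\sum_{k\ge 0}\frac{(-\log n)^k}{k!}(s-c)^k$, these Taylor coefficients are, up to the standard factorials and signs, the truncated log-sums $u_N$ of Definition~\ref{defn-msc} at $(a_1,\ldots,a_p)$ and at $(b_1,\ldots,b_q)$, so the coefficients of the product are their pairwise products. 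On the right side, for each stuffling the difference $u_i-c_i$ equals one of $s_{\sigma(i)}-a_{\sigma(i)}$, $t_{\tau(i)}-b_{\tau(i)}$, or their sum; expanding out shows that the Taylor coefficients of $\z(u_1,\ldots,u_r)_{<N}$ at $\mathbf{a}$ are fixed $\N$-linear combinations of the truncated log-sums $u_N$ at the deduced point $(c_1,\ldots,c_r)$, and that, re-assembled into a power series, they constitute precisely the series \eqref{reg-multi-zeta} written for $(c_1,\ldots,c_r)$ with each $s_i$ replaced by $u_i$.

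Now comes the key point. Because $(a_1,\ldots,a_p)\in\overline{U_p}$ and $(b_1,\ldots,b_q)\in\overline{U_q}$, every deduced point $(c_1,\ldots,c_r)$ lies in $\overline{U_r}$; hence, by \thmref{thm-msc} and \rmkref{rmk-order-3}, every truncated log-sum occurring above belongs to $\cS$ and has a formal asymptotic expansion of non-negative order, that is, an element of $\C[L][[X]]$, and the same then holds for their products and $\N$-linear combinations. On the subalgebra of $\cS$ consisting of sequences with this property, the map sending a sequence to its regularised value $\lambda_{(0,0)}$ factors as the $\C$-algebra homomorphism sending a sequence to its formal asymptotic expansion (now landing in $\C[L][[X]]$), followed by the homomorphism $\C[L][[X]]\to\C[L]$ taking the coefficient of $X^0$, followed by evaluation at $L=0$; it is therefore itself a $\C$-algebra homomorphism, in particular multiplicative. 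Applying it coefficientwise to the Taylor identity of the preceding paragraph converts the left side into the formal power series $\z^\reg_{(a_1,\ldots,a_p)}(s_1,\ldots,s_p)\cdot\z^\reg_{(b_1,\ldots,b_q)}(t_1,\ldots,t_q)$ and each summand on the right into $\z^\reg_{(c_1,\ldots,c_r)}(u_1,\ldots,u_r)$. Since both sides converge in a neighbourhood of $\mathbf{a}$ --- the left by \thmref{thm-reg-exp} applied to $(a_1,\ldots,a_p)$ and $(b_1,\ldots,b_q)$, the right because the finitely many series $\z^\reg_{(c)}$ converge near their centres and $u_i\to c_i$ as $(s,t)\to\mathbf{a}$ --- the identity \eqref{reg-stuffle} holds for $(s,t)$ near $\mathbf{a}$, and then, by the meromorphic continuation supplied by \thmref{thm-reg-exp}, as an identity of meromorphic functions on $\C^{p+q}$.

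The step I expect to demand the most care is this coefficientwise passage to regularised values. First, one must verify that regularisation is genuinely a ring homomorphism on the relevant subalgebra of $\cS$ --- which is exactly where the hypothesis that $(a_1,\ldots,a_p)$ and $(b_1,\ldots,b_q)$ lie in the closures of the domains of convergence is used in an essential way, since $\lambda_{(0,0)}$ is \emph{not} multiplicative on all of $\cS$. Second, one must keep careful track of the reparametrisation $s_i\mapsto u_i$, so that the regularised right-hand side is recognised as $\z^\reg_{(c_1,\ldots,c_r)}$ evaluated at the deduced arguments $(u_1,\ldots,u_r)$ rather than at $(s_1,\ldots,s_r)$.
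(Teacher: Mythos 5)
Your proof is correct, but it takes a genuinely different route from the paper's. The paper proves \eqref{reg-stuffle} ``top--down'': it expands both sides via the explicit formula \eqref{reg-exp} of \thmref{thm-reg-exp}, invokes the classical stuffle product formula for the multiple zeta functions themselves, proves a shuffle identity for the rational prefactors $\prod(s_i+\cdots+s_j-(j-i))^{-1}$ by iterated--integral representations (\lemref{lem-rational-function-shuffle}), and then matches the two resulting index sets by an explicit combinatorial bijection (\lemref{lem-bijection}). You instead work ``bottom--up'' at the level of the truncated sums: the finite stuffle identity for $\z(\cdot)_{<N}$, passage to Taylor coefficients at $(a_1,\ldots,a_p,b_1,\ldots,b_q)$, and the observation that the regularised value $\lambda_{(0,0)}$ is a $\C$-algebra homomorphism on the subalgebra of $\cS$ of sequences whose formal asymptotic expansion lies in $\C[L][[X]]$ (being the composite of the expansion homomorphism, reduction mod $X$, and evaluation at $L=0$) --- which applies here precisely because, by \rmkref{rmk-order-3}, all the deduced points lie in the closures of the domains of convergence. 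Your argument bypasses Lemmas \ref{lem-rational-function-shuffle} and \ref{lem-bijection} and the stuffle formula for the meromorphically continued multiple zeta functions, and it isolates the multiplicativity of the regularisation process as the conceptual core of the theorem; the paper's argument, by contrast, yields the identity directly between the explicit closed forms \eqref{reg-exp} and so makes the polar structure of both sides visible. Note that you still rely on \thmref{thm-reg-exp}, but only for the convergence of the power series \eqref{reg-multi-zeta} near each centre and for the meromorphic continuation needed to upgrade the local identity to one on all of $\C^{p+q}$; since that theorem precedes this one in the paper, this is legitimate. (Two minor imprecisions that do not affect the argument: the linear combinations of truncated log-sums arising from re-expanding $(u_i-c_i)^{m_i}$ have coefficients in $\Q$ rather than $\N$ once the factors $(-1)^{|{\bf m}|}/{\bf m}!$ are accounted for, and only linearity of the regularised value is needed there in any case.)
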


For the purpose of the proof, it is convenient to first prove two
combinatorial lemmas.

\begin{lem}\label{lem-rational-function-shuffle}
Let $p,q \ge 0$ be integers and $X_1,\ldots,X_p,Y_1,\ldots,Y_q$ be indeterminates.
Then we have the following equality of rational functions  $:$
\begin{equation}\label{rational-function-shuffle}
\begin{split}
&\frac{1}{X_p(X_p+X_{p-1})\cdots(X_p+\cdots+X_1)} \cdot
\frac{1}{Y_q(Y_q+Y_{q-1})\cdots(Y_q+\cdots+Y_1)}\\
&= \sum_{(A,B)} \frac{1}{Z_{p+q}(Z_{p+q}+Z_{p+q-1})\cdots(Z_{p+q}+\cdots+Z_1)},
\end{split}
\end{equation}
where the sum is over the shufflings $(A,B)$ of $p$ and $q$, and
$(Z_1,\ldots,Z_{p+q})$ is the sequence deduced from
$(X_1,\ldots,X_p),(Y_1,\ldots,Y_q)$ by this shuffling.
\end{lem}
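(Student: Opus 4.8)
The plan is to reduce the identity between rational functions to an identity between convergent integrals, exactly in the spirit of the arguments used in Section~\ref{sec-inv}. Since both sides of \eqref{rational-function-shuffle} are rational functions, it is enough to verify the equality after substituting arbitrary positive real numbers $x_1,\ldots,x_p$ and $y_1,\ldots,y_q$ for the indeterminates. Then I would observe that each factor admits an iterated-integral representation: for positive reals,
$$
\frac{1}{x_p(x_p+x_{p-1})\cdots(x_p+\cdots+x_1)}
= \int_{1>t_1>t_2>\cdots>t_p>0} t_1^{x_1-1}t_2^{x_2-1}\cdots t_p^{x_p-1}\,dt_1\cdots dt_p,
$$
and similarly for the $Y$-factor with integration variables $u_1>\cdots>u_q$ and exponents $y_j-1$. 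This is the same computation underlying \eqref{a-ij}, specialised to $i=0$, $j=p$ (and $j=q$), where all the linear forms $X_m+\cdots+X_j$ that appear are precisely the partial sums occurring in the denominator.

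Next I would multiply the two integrals, obtaining an integral over the product of two ordered simplices,
$$
\int_{\substack{1>t_1>\cdots>t_p>0\\ 1>u_1>\cdots>u_q>0}}
t_1^{x_1-1}\cdots t_p^{x_p-1}\,u_1^{y_1-1}\cdots u_q^{y_q-1}\,dt_1\cdots dt_p\,du_1\cdots du_q.
$$
The decomposition now comes from sorting all $p+q$ variables $t_1,\ldots,t_p,u_1,\ldots,u_q$ into decreasing order. Up to a set of measure zero (where two of the variables coincide), the domain is partitioned according to the interleaving pattern, and each pattern corresponds exactly to a shuffling $(A,B)$ of $p$ and $q$: $A$ records which of the $p+q$ sorted positions are occupied by a $t$-variable, $B$ the positions occupied by a $u$-variable, $A\sqcup B=\{1,\ldots,p+q\}$. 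On the piece indexed by $(A,B)$, renaming the sorted variables $v_1>v_2>\cdots>v_{p+q}$ and noting that the exponent attached to $v_i$ is $Z_i-1$ where $(Z_1,\ldots,Z_{p+q})$ is the sequence deduced from $(X_1,\ldots,X_p),(Y_1,\ldots,Y_q)$ by the shuffling, the integral over that simplex is again
$$
\frac{1}{z_{p+q}(z_{p+q}+z_{p+q-1})\cdots(z_{p+q}+\cdots+z_1)}
$$
by the same simplex-integral formula used at the start. Summing over all shufflings gives the right-hand side of \eqref{rational-function-shuffle}, completing the proof.

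The only point requiring genuine care — and hence the main obstacle — is the bookkeeping in the middle step: one must check that when the sorted variable in position $i$ is, say, the $\sigma(i)$-th of the $t$'s (with $\sigma\colon A\to\{1,\ldots,p\}$ the increasing bijection), the exponent it carries is indeed $X_{\sigma(i)}-1$, so that the denominators assemble into the claimed partial sums of the $Z$'s. This is purely combinatorial and amounts to matching the definition of "sequence deduced by a shuffling" against the relabelling of variables, but it is where a sign or index slip could creep in; everything else is the standard Euler-integral evaluation of an ordered-simplex integral and the measure-zero overlap of the interleaving strata, both of which are routine.
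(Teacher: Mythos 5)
Your proposal is correct and follows essentially the same route as the paper: reduce to positive real substitutions, invoke the simplex-integral representation \eqref{a-ij}, and use the shuffle decomposition of a product of iterated integrals. The only difference is that you work out the measure-zero decomposition of the product of two ordered simplices explicitly, whereas the paper simply cites Ree \cite{RR} for the shuffle product formula of iterated integrals.
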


\begin{proof}
It suffices to prove that this equality holds when we replace
the indeterminates $X_1,\ldots,X_p,Y_1,\ldots,Y_q$ by positive real
numbers $x_1,\ldots,x_p,y_1,\ldots,y_q$. The proof then follows from the
integral representation given in formula \eqref{a-ij}, as it is
well known \cite{RR} that the product of iterated integrals 
satisfy shuffle product formula.
\end{proof}

\begin{lem}\label{lem-bijection}
Let $p,q \ge 0$ be integers and $(a_1,\ldots,a_p) \in \overline{U_p} \cap \Z^p$,
$(b_1,\ldots,b_q) \in \overline{U_q} \cap \Z^q$. Let $E$ denote the set
of quadruples $(i,j,(I,J),(t,I',J'))$ satisfying the following conditions $:$
\begin{equation}\label{E}
\begin{split}
& 0 \le i \le p, 0 \le j \le q, a_1+\cdots+a_i=i, b_1+\cdots+b_j=j,\\
& (I,J) \ \text{is a shuffling of $i$ and $j$},\\
& (t,I',J') \ \text{is a stuffling of $(p-i)$ and $(q-j)$}.
\end{split}
\end{equation}
Let $F$ denote the set of pairs $((r,A,B),k)$ satisfying the following conditions $:$
\begin{equation}\label{F}
\begin{split}
& (r,A,B) \ \text{is a stuffling of $p$ and $q$},\\
& 0 \le k \le r \text{ and } c_1+\cdots+c_k=k, \text{ where }
(c_1,\ldots,c_r) \text{ is the sequence}\\
& \text{deduced from } (a_1,\ldots,a_p) \text{ and }
(b_1,\ldots,b_q) \text{ by this stuffling}.
\end{split}
\end{equation}
Then for each $(i,j,(I,J),(t,I',J')) \in E$,
$$
\varphi(i,j,(I,J),(t,I',J')) :=((i+j+t, I \cup (i+j+I'),J \cup (i+j+J')),i+j)
$$
belongs to $F$, and $\varphi$ is a bijection from $E$ to $F$.
\end{lem}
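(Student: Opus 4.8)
The plan is to establish the bijection $\varphi$ by exhibiting an explicit two-sided inverse, and to verify that $\varphi$ (and its inverse) respect the defining conditions of the sets $E$ and $F$. First I would check that $\varphi$ is well-defined, i.e. that $\varphi(i,j,(I,J),(t,I',J'))$ actually lies in $F$. Writing $r=i+j+t$, $A=I\cup(i+j+I')$, $B=J\cup(i+j+J')$, $k=i+j$, one sees directly from the hypotheses that $(I,J)$ being a shuffling of $i$ and $j$ and $(t,I',J')$ being a stuffling of $(p-i)$ and $(q-j)$ forces $(r,A,B)$ to be a stuffling of $p$ and $q$: indeed $|A|=i+(p-i)=p$, $|B|=j+(q-j)=q$, and $A\cup B=\{1,\ldots,i+j\}\cup(i+j+\{1,\ldots,t\})=\{1,\ldots,r\}$. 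The remaining point is that $c_1+\cdots+c_k=k$ for $k=i+j$. Here the key observation is that the sequence $(c_1,\ldots,c_r)$ deduced by the stuffling $(r,A,B)$ restricts, on the first $k=i+j$ indices, to the sequence deduced from $(a_1,\ldots,a_i)$ and $(b_1,\ldots,b_j)$ by the shuffling $(I,J)$; since this latter stuffling is a shuffling (indices disjoint), $c_1+\cdots+c_k$ is just $(a_1+\cdots+a_i)+(b_1+\cdots+b_j)=i+j=k$, using the hypotheses $a_1+\cdots+a_i=i$ and $b_1+\cdots+b_j=j$.

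Next I would construct the inverse map $\psi:F\to E$. Given $((r,A,B),k)\in F$, set $i:=|A\cap\{1,\ldots,k\}|$ and $j:=|B\cap\{1,\ldots,k\}|$; since $A\cup B=\{1,\ldots,r\}\supseteq\{1,\ldots,k\}$ we have $i+j\ge k$, and I claim in fact $i+j=k$ and $A\cap B\cap\{1,\ldots,k\}=\varnothing$. This is where the condition $c_1+\cdots+c_k=k$ is used essentially: if some index $m\le k$ lay in $A\cap B$, then (because $(a_1,\ldots,a_p)\in\overline{U_p}$ and $(b_1,\ldots,b_q)\in\overline{U_q}$, so all partial sums of the $a$'s and of the $b$'s are $\ge$ their lengths) the partial sum $c_1+\cdots+c_k$ would strictly exceed $k$. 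More precisely, one checks $c_1+\cdots+c_k=(a_1+\cdots+a_i)+(b_1+\cdots+b_j)\ge i+j$ with equality iff both partial sums are tight, and $i+j\ge k$ with equality iff $A$ and $B$ do not overlap inside $\{1,\ldots,k\}$; combining, $c_1+\cdots+c_k=k$ forces $i+j=k$, $a_1+\cdots+a_i=i$, $b_1+\cdots+b_j=j$, and disjointness on $\{1,\ldots,k\}$. Then set $(I,J):=(A\cap\{1,\ldots,k\},B\cap\{1,\ldots,k\})$, which is a shuffling of $i$ and $j$, set $t:=r-k$, and let $(I',J')$ be obtained from $(A\cap\{k+1,\ldots,r\},B\cap\{k+1,\ldots,r\})$ by subtracting $k$ from every element; this gives a stuffling of $(p-i)$ and $(q-j)$. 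Define $\psi((r,A,B),k):=(i,j,(I,J),(t,I',J'))$.

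Finally I would verify $\psi\circ\varphi=\mathrm{id}_E$ and $\varphi\circ\psi=\mathrm{id}_F$, which is essentially bookkeeping: applying $\psi$ to $\varphi(i,j,(I,J),(t,I',J'))$ recovers $i$ and $j$ because $I\subseteq\{1,\ldots,i+j\}$ and $(i+j+I')\subseteq\{i+j+1,\ldots,r\}$ are separated by $k=i+j$, so $|A\cap\{1,\ldots,k\}|=|I|=i$ and likewise for $j$; and the remaining components $(I,J)$, $t$, $(I',J')$ come back by construction. The other composition is symmetric. I expect the main obstacle — really the only non-formal point — to be the argument, sketched above, that the single scalar equation $c_1+\cdots+c_k=k$ together with membership in $\overline{U_p}$ and $\overline{U_q}$ forces the four simultaneous conclusions ($i+j=k$, tightness of both partial sums, and disjointness on $\{1,\ldots,k\}$), since this is what makes $k$ "factor" correctly as $i+j$ and hence makes $\psi$ well-defined. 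Everything else is the routine verification that unions, complements, and index shifts are mutually inverse operations.
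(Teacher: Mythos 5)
Your proof is correct and takes essentially the same route as the paper's: the well-definedness check, and crucially the observation that $c_1+\cdots+c_k=k$ combined with the inequalities $a_1+\cdots+a_{|C|}\ge |C|$ and $b_1+\cdots+b_{|D|}\ge |D|$ (from membership in $\overline{U_p}$, $\overline{U_q}$) forces disjointness on $\{1,\ldots,k\}$ and tightness of both partial sums, is exactly the paper's surjectivity argument. The only cosmetic difference is that you package injectivity and surjectivity together as an explicit two-sided inverse $\psi$, whereas the paper proves the two properties separately while constructing the same preimage.
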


\begin{proof}
\underline{$\varphi(E) \subset F$} :
Note that $(i+j+I') \cup (i+j+J')=\{i+j+1,\ldots, i+j+t\}$. Now
since $(I,J)$ is a shuffling of $i$ and $j$ and $(t,I',J')$ is a stuffling
of $(p-i)$ and $(q-j)$, we get that $(i+j+t, I \cup (i+j+I'),J \cup (i+j+J'))$
is a stuffling of $p$ and $q$.

Now suppose $(c_1,\ldots,c_{i+j+t})$
is the sequence deduced from $(a_1,\ldots,a_p)$ and $(b_1,\ldots,b_q)$
by this stuffling. Since $(I,J)$ is a shuffling of $i$ and $j$ with
$a_1+\cdots+a_i=i$ and $b_1+\cdots+b_j=j$, we get that $c_1+\cdots+c_{i+j}=i+j$.
Hence, $\varphi(i,j,(I,J),(t,I',J')) \in F$.

\underline{$\varphi$ is injective} :
Let $\varphi(i_1,j_1,(I_1,J_1),(t_1,I_1',J_1'))
=\varphi(i_2,j_2,(I_2,J_2),(t_2,I_2',J_2'))$. Then
$i_1+j_1=i_2+j_2$ and $i_1+j_1+t_1=i_2+j_2+t_2$. Hence $t_1=t_2$.

Further, $I_1 \cup (i_1+j_1+I_1')=I_2 \cup (i_2+j_2+I_2')$. Since
$I_1, I_2$ are subsets of  $\{1,\ldots,i_1+j_1\}$, and
$(i_1+j_1+I_1'), (i_2+j_2+I_2')$ are subsets of $\{i_1+j_1+1,\ldots,i_1+j_1+t_1\}$,
we get that $I_1=I_2$ and $(i_1+j_1+I_1')=(i_2+j_2+I_2')$. Hence $i_1=i_2$ and
so $j_1=j_2$ and $I_1'=I_2'$. Similarly we get, $J_1=J_2$ and $J_1'=J_2'$.
This shows that $\varphi$ is injective.

\underline{$\varphi$ is surjective} :
Next let $((r,A,B),k) \in F$ and $(c_1,\ldots,c_r)$ be the sequence deduced from
$(a_1,\ldots,a_p)$ and $(b_1,\ldots,b_q)$ by the stuffling $(r,A,B)$.
Set $C=A \cap \{1,\ldots,k\}, D=B \cap \{1,\ldots,k\}$. We first prove that $(C,D)$
is a shuffling of $|C|$ and $|D|$. Clearly, $C \cup D =\{1,\ldots,k\}$.
We show that $C \cap D=\varnothing$. Let $\sigma$ and $\tau$ denote the
unique increasing bijections from $A \to \{1,\ldots,p\}$ and
$B \to\{1,\ldots,q\}$ respectively. Then $\sigma_{|C}  : C \to \{1,\ldots,|C|\}$
and $\tau_{|D}  : D \to \{1,\ldots,|D|\}$ denote the unique increasing bijections.
We then have, for $1 \le i \le k$,
$$
c_i  := \begin{cases}
a_{\sigma_{|C}(i)} & \text{when } i \in C \setminus D,\\
b_{\tau_{|D}(i)} & \text{when } i \in D \setminus C, \\
a_{\sigma_{|C}(i)} + b_{\tau_{|D}(i)} & \text{when } i \in C \cap D.
\end{cases}
$$
Hence, $c_1+\cdots+c_k=a_1+\cdots+a_{|C|}+b_1+\cdots+b_{|D|}$.
As $(a_1,\ldots,a_p) \in \overline{U_p} \cap \Z^p$ and
$(b_1,\ldots,b_q) \in \overline{U_q} \cap \Z^q$, we have
$a_1+\cdots+a_{|C|} \ge |C|$ and $b_1+\cdots+b_{|D|} \ge |D|$.
Hence $c_1+\cdots+c_k=k \ge |C|+|D|$. As $C \cup D =\{1,\ldots,k\}$,
we get $k = |C|+|D|$ and $C \cap D=\varnothing$.

Now set $C''=A \setminus \{1,\ldots,k\}$ and
$D''=B \setminus \{1,\ldots,k\}$. Putting
$C'=\{x-k  : x \in C''\}$ and
$D'=\{x-k  : x \in D''\}$, we get that
$$
\varphi(|C|,|D|,(C,D),(r-k,C',D'))=((r,A,B),k).
$$
This completes the proof of \lemref{lem-bijection}.
\end{proof}

\begin{proof}[Proof of \thmref{thm-reg-stuffle}]
We expand both the sides of \eqref{reg-stuffle}. Firstly,
from \thmref{thm-reg-exp} we get that
\begin{align*}
&\z^\reg_{(a_1, \ldots, a_p)} (s_1,\ldots,s_p) \cdot
\z^\reg_{(b_1, \ldots, b_q)} (t_1,\ldots,t_q)\\
&=\sum_{0 \le i \le p;\ a_1+\cdots+a_i=i \atop 0 \le j \le q;\ b_1+\cdots+b_j=j}
\frac{(-1)^{i+j} \z(s_{i+1},\ldots,s_p) \z(t_{j+1},\ldots,t_q)}
{(s_i-1) \cdots (s_i+\cdots+s_1-i) (t_j-1) \cdots (t_j+\cdots+t_1-j)}.
\end{align*}
Then using the stuffle product formula for multiple zeta functions and
\lemref{lem-rational-function-shuffle} we get that
\begin{align*}
&\z^\reg_{(a_1, \ldots, a_p)} (s_1,\ldots,s_p) \cdot
\z^\reg_{(b_1, \ldots, b_q)} (t_1,\ldots,t_q)\\
&=\sum_{(i,j,(I,J),(t,I',J')) \in E}
\frac{(-1)^{i+j} \z(u_{i+j+1},\ldots,u_{i+j+t})}
{(u_{i+j}-1) \cdots (u_{i+j}+\cdots+u_1-i-j)},
\end{align*}
where $E$ is as in \lemref{lem-bijection} and
$(u_1,\ldots,u_{i+j},u_{i+j+1},\ldots,u_{i+j+t})$ is the sequence
deduced from $(s_1,\ldots,s_p)$ and $(t_1,\ldots,t_q)$ by the stuffling
$(i+j+t,I\cup (i+j+I'),J \cup (i+j+J'))$.

Secondly, by \thmref{thm-reg-exp},
\begin{align*}
\sum_{(r,A,B)} \z^\reg_{(c_1, \ldots, c_r)} (z_1,\ldots,z_r)
&=\sum_{(r,A,B)} \sum_{0 \le k \le r \atop c_1+\cdots+c_k=k}
\frac{(-1)^{k} \z(z_{k+1},\ldots,z_r)}{(z_k-1) \cdots (z_k+\cdots+z_1-k)}\\
&=\sum_{((r,A,B),k)\in F}
\frac{(-1)^{k} \z(z_{k+1},\ldots,z_r)}{(z_k-1) \cdots (z_k+\cdots+z_1-k)},
\end{align*}
where $F$ is as in \lemref{lem-bijection}.
Now we conclude the theorem by \lemref{lem-bijection}.
\end{proof}

\thmref{thm-reg-stuffle} shows that the $\Q$-subspace of $\R$, generated by the
multiple Stieltjes constants $\gamma_{k_1, \ldots, k_r}^{(a_1,\ldots,a_r)}$, for
integers $r,k_1,\ldots,k_r\ge 0$, $(a_1,\ldots,a_r) \in \overline{U_r} \cap \Z^r$,
is a unitary $\Q$-subalgebra of $\R$. We denote it by $\Gamma$.
Its $\Q$-subspace generated by the multiple Stieltjes constants
$\gamma_{k_1, \ldots, k_r}^{(a_1,\ldots,a_r)}$, for integers
$r,k_1,\ldots,k_r\ge 0, a_1,\ldots,a_r\ge 1$ is a $\Q$-subalgebra
of $\Gamma$. We denote it by $\Gamma^+$.

\begin{cor}\label{cor-algebra}
Let $\Gamma^{++}$ be the $\Q$-subalgebra of $\Gamma^+$ generated by the
elements of the form $\gamma_{k_1, \ldots, k_r}^{(a_1,\ldots,a_r)}$,
where $r,k_1,\ldots,k_r\ge 0$ are integers and either all $a_i$'s are
equal to $1$, or $r,a_1,\ldots,a_r$ are positive integers with $a_1 \ge 2$.
Then $\Gamma^{++}=\Gamma^+$.
\end{cor}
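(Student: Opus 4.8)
The plan is to show that every generator of $\Gamma^+$ lies in $\Gamma^{++}$, and since $\Gamma^{++}\subseteq\Gamma^+$ trivially, this gives equality. Thus I must show that for every $r\ge 1$, every $(k_1,\ldots,k_r)\in\N^r$, and every $(a_1,\ldots,a_r)$ with all $a_i\ge 1$, the constant $\gamma_{k_1,\ldots,k_r}^{(a_1,\ldots,a_r)}$ belongs to $\Gamma^{++}$. The natural strategy is induction on $r$, and within fixed $r$, a secondary induction on the number of leading coordinates equal to $1$; call this number $l$, so that $(a_1,\ldots,a_r)=(1,\ldots,1,a_{l+1},\ldots,a_r)$ with $a_{l+1}\ge 2$ (when $l<r$) and $(a_{l+1},\ldots,a_r)\in U_{r-l}$ since all entries are $\ge 1$ and the partial sums past the initial block strictly exceed their index. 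The base cases $l=r$ (all $a_i=1$) and $l=0$ (so $a_1\ge 2$) are exactly the generators listed in the statement of the corollary, hence in $\Gamma^{++}$ by definition.

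For the inductive step, I would extract the multiple Stieltjes constants from the Laurent type expansion of Theorem \ref{thm-gen-multi-zeta-exp}, which applies precisely to points of the form $(1,\ldots,1,a_{l+1},\ldots,a_r)$ with $(a_{l+1},\ldots,a_r)\in U_{r-l}$. That theorem expresses $\z(s_1,\ldots,s_r)$ in a neighbourhood of such a point as
\begin{equation*}
\z(s_1,\ldots,s_r)=\z^\reg_{(a_1,\ldots,a_r)}(s_1,\ldots,s_r)+\sum_{i=1}^{l}\frac{\z^\reg_{(a_{i+1},\ldots,a_r)}(s_{i+1},\ldots,s_r)}{(s_1-1)\cdots(s_1+\cdots+s_i-i)}.
\end{equation*}
The Taylor coefficients of $\z^\reg_{(a_1,\ldots,a_r)}$ at $(a_1,\ldots,a_r)$ are, up to the explicit rational factors $(-1)^{k_1+\cdots+k_r}/(k_1!\cdots k_r!)$, the numbers $\gamma_{k_1,\ldots,k_r}^{(a_1,\ldots,a_r)}$. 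I would solve for this term: $\z^\reg_{(a_1,\ldots,a_r)}$ equals $\z(s_1,\ldots,s_r)$ minus the sum over $1\le i\le l$. For each such $i$, the point $(a_{i+1},\ldots,a_r)$ has strictly fewer than $l$ leading $1$'s (namely $l-i$), so by the secondary induction hypothesis all multiple Stieltjes constants $\gamma^{(a_{i+1},\ldots,a_r)}_{k_{i+1},\ldots,k_r}$, hence all Taylor coefficients of $\z^\reg_{(a_{i+1},\ldots,a_r)}$ at that point, lie in $\Gamma^{++}$; and the coefficients of the Laurent expansion of the rational function $1/((s_1-1)\cdots(s_1+\cdots+s_i-i))$ about $(1,\ldots,1)$ are rational numbers. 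Multiplying two power series whose coefficients lie in a $\Q$-algebra keeps the coefficients in that algebra, so each summand contributes coefficients in $\Gamma^{++}$. It remains to handle the coefficients of $\z(s_1,\ldots,s_r)$ itself at the point $(a_1,\ldots,a_r)$: since this point lies in $U_r$ when $l=0$ — but here we are in the inductive step with $l\ge 1$, so $(a_1,\ldots,a_r)$ is a boundary point and $\z$ itself is singular there. This is the one place requiring care.

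The resolution of that difficulty — which I expect to be the main obstacle — is to not isolate $\z$ but rather to read Theorem \ref{thm-gen-multi-zeta-exp} the other way: the \emph{finite} part of $\z$ near $(a_1,\ldots,a_r)$, after subtracting the polar terms $\sum_{i=1}^l (\cdots)/((s_1-1)\cdots(s_1+\cdots+s_i-i))$ with numerators $\z^\reg_{(a_{i+1},\ldots,a_r)}$, is by definition $\z^\reg_{(a_1,\ldots,a_r)}$, so there is no independent ``$\z$ part'' to control — the expansion \emph{is} the definition of the regularised function and no subtraction of a genuinely singular object is needed. Concretely, I would argue that $\gamma_{k_1,\ldots,k_r}^{(a_1,\ldots,a_r)}$ is, by Theorem \ref{thm-gen-multi-zeta-exp} and \rmkref{rmk-reg-poles}-type bookkeeping, a $\Q$-linear combination of products of the form $(\text{rational number})\cdot\gamma^{(a_{i+1},\ldots,a_r)}_{k'_{i+1},\ldots,k'_r}$ with $1\le i\le l$, \emph{plus} a single term coming from depth $r$ with one fewer leading $1$ — here one uses that the reduction in the proof of Theorem \ref{thm-gen-multi-zeta-exp} (which proceeds "mutatis mutandis" from the proof of Theorem \ref{thm-multi-zeta-exp}) peels off one coordinate at a time, relating $\z^\reg$ at $(1,\ldots,1,a_{l+1},\ldots,a_r)$ to $\z^\reg$ at $(1,\ldots,1,a_{l+2},\ldots,a_r)$ (depth $r-1$) and at $(a_1+\cdots,\ldots)$-type shifted points whose leading $1$-count is smaller, all via identities with rational coefficients (coming from \eqref{first-step}, whose coefficients $(s_1-1)_{k+1}/(k+1)!$ become rational after differentiation at integer points). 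Every ingredient is therefore either a rational number or a multiple Stieltjes constant at a point covered by the induction hypothesis, so $\gamma_{k_1,\ldots,k_r}^{(a_1,\ldots,a_r)}\in\Gamma^{++}$, completing the induction and the proof.
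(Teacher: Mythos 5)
There is a genuine gap, and it sits exactly where you flagged it: the treatment of the ``$\z$ part.'' Your plan is to extract $\gamma_{k_1,\ldots,k_r}^{(a_1,\ldots,a_r)}$ from the identity
\begin{equation*}
\z(s_1,\ldots,s_r)=\z^\reg_{(a_1,\ldots,a_r)}(s_1,\ldots,s_r)+\sum_{i=1}^{l}\frac{\z^\reg_{(a_{i+1},\ldots,a_r)}(s_{i+1},\ldots,s_r)}{(s_1-1)\cdots(s_1+\cdots+s_i-i)},
\end{equation*}
but this identity only relates the unknown constants at $(a_1,\ldots,a_r)$ to the germ of $\z$ at that point and to constants at the \emph{truncated} points $(a_{i+1},\ldots,a_r)$. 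Since $l\ge 1$, the germ of $\z$ at $(a_1,\ldots,a_r)$ is singular and is not independently known in terms of $\Gamma^{++}$; the only description of it available is the displayed identity itself, whose holomorphic part \emph{is} $\z^\reg_{(a_1,\ldots,a_r)}$. Your proposed resolution (``the expansion is the definition of the regularised function, so there is no independent $\z$ part to control'') concedes precisely this circularity rather than removing it: $\z^\reg_{(a_1,\ldots,a_r)}$ is defined via the regularised values of the truncated sums (Definition~\ref{defn-msc}), not via the expansion, and the expansion gives no formula for $\gamma^{(a_1,\ldots,a_r)}_{k_1,\ldots,k_r}$ in terms of $\Gamma^{++}$. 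The fallback via the one-coordinate-at-a-time recursion has the same defect: the function $u=(s_1-1)\z(s_1,\ldots,s_r)-\z(s_1+s_2-1,s_3,\ldots,s_r)$ is holomorphic at the point, but its Taylor coefficients are computed in the proof of \thmref{thm-multi-zeta-exp} only as (lower-depth data) $+$ (coefficients of $(s_1-1)v$), i.e.\ knowing them is \emph{equivalent} to knowing the $\gamma^{(a_1,\ldots,a_r)}_{k_1,\ldots,k_r}$ modulo lower-depth constants. Already for $(a_1,a_2)=(1,2)$ your scheme would have to produce $\gamma^{(1,2)}_{k_1,k_2}$ from $\gamma^{(2)}_{k}$, $\gamma^{(1)}_{k}$ and $\gamma^{(1,1)}_{k_1,k_2}$ using only the expansion theorems, and no such derivation exists in the paper or in your proposal.

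The missing ingredient is the stuffle product formula for the regularised functions (\thmref{thm-reg-stuffle}), which you never invoke and which is the reason this corollary is placed after that theorem. The paper's proof is an induction on the number $l$ of leading $1$'s: by \thmref{thm-reg-stuffle}, the product $\gamma^{(1,\ldots,1)}_{k_1,\ldots,k_l}\,\gamma^{(a_{l+1},\ldots,a_r)}_{k_{l+1},\ldots,k_r}$ (a product of two generators of $\Gamma^{++}$, hence in $\Gamma^{++}$ since it is an algebra) equals $\gamma^{(a_1,\ldots,a_r)}_{k_1,\ldots,k_r}$ plus a $\Q$-linear combination of constants at stuffled points $(c_1,\ldots,c_t)$; one checks that every stuffling other than the concatenation yields a point with strictly fewer than $l$ leading $1$'s (because $a_{l+1}\ge 2$ forces any initial segment of $1$'s to come entirely from the first factor), so those terms lie in $\Gamma^{++}$ by the induction hypothesis, and one solves for $\gamma^{(a_1,\ldots,a_r)}_{k_1,\ldots,k_r}$. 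This is a relation among constants of the \emph{same} depth at \emph{different} points --- exactly the kind of relation the expansion theorems cannot supply.
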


\begin{proof}
Let $a_1,\ldots,a_r$ be any positive integers. If all $a_i$'s are not equal
to $1$, let $l=l(a_1,\ldots,a_r)$ be the integer such that $(a_1,\ldots,a_r)
=(1,\ldots,1,a_{l+1},\ldots,a_r)$ with $a_{l+1} \ge 2$. We prove
this corollary by induction on $l$. When $l=0$ or $l=r$, this is clear. When $0 <l < r$,
we deduce from \thmref{thm-reg-stuffle} that the product
$$
\gamma_{k_1, \ldots, k_l}^{(1,\ldots,1)} \gamma_{k_{l+1}, \ldots, k_r}^{(a_{l+1},\ldots,a_r)}
$$
is a $\Q$-linear combination of $\gamma_{k_1, \ldots, k_r}^{(a_1,\ldots,a_r)}$
and of other multiple Stieltjes constants that belong to $\Gamma^{++}$ by the
induction hypothesis. This completes the proof of \corref{cor-algebra}.
\end{proof}

\begin{rmk}\rm
Let $\Gamma'$ be the $\Q$-subalgebra of $\Gamma$, generated by the multiple
Stieltjes constants $\gamma_{k_1, \ldots, k_r}^{(a_1,\ldots,a_r)}$, for integers
$r,k_1,\ldots,k_r\ge 0$ and  $(a_1,\ldots,a_r)$ as in Section \ref{pos-boundary}. 
If $\Gamma''$ denotes the $\Q$-subalgebra of $\Gamma'$ generated by the
elements of the form $\gamma_{k_1, \ldots, k_r}^{(a_1,\ldots,a_r)}$,
where $r,k_1,\ldots,k_r\ge 0$ are integers and either all $a_i$'s are
equal to $1$, or $r \ge 1$ with $(a_1,\ldots,a_r) \in U_r \cap \Z^r$.
Then similarly we can derive that $\Gamma'=\Gamma''$.
\end{rmk}

\begin{rmk}\rm
A statement formally equivalent to \corref{cor-algebra}, has also been
stated in \cite[Theorem 1.3]{MOW}. But it does not imply \thmref{thm-reg-stuffle}.
\end{rmk}

\section{Asymptotic expansions of sequences of germs of holomorphic and 
meromorphic functions}\label{asymp-exp}

This is a preparatory section where we set up the language required to give a succinct
proof of \thmref{thm-gen-reg-exp}, which extends \thmref{thm-reg-exp} for
any general points with integral coordinates.

\subsection{Asymptotic expansions of sequences of germs of holomorphic functions}\label{asymp-holo}

Let ${\bf a}$ be a point in $\C^r$. Let $\cO_{\bf a}$ denote the
$\C$-algebra of germs of holomorphic functions at ${\bf a}$. For all $f \in \cO_{\bf a}$
and ${\bf k} \in \N^r$, let $c_{\bf k}(f)$ denote the ${\bf k}$-th
Taylor coefficient of $f$ at ${\bf a}$. In other words, $f$ is the germ at ${\bf a}$
of the function defined in some neighbourhood of ${\bf a}$ by
${\bf s} \mapsto \sum_{{\bf k} \in \N^r} c_{\bf k}(f) ({\bf s}-{\bf a})^{\bf k}$.

Let ${\bf T}=(T_1,\ldots,T_r)$ be a sequence of $r$ indeterminates.
The map $f \mapsto \sum_{{\bf k} \in \N^r} c_{\bf k}(f) {\bf T}^{\bf k}$ is an isomorphism
of $\C$-algebras from $\cO_{\bf a}$ to the algebra of convergent power series with
coefficients in $\C$ in the indeterminates ${\bf T}$.

We recall from Section \ref{msc} that the set $\cE$ of sequences 
$$
\left((\log n)^l n^{-m} \right)_{n\ge 1},
$$
where $l \in \N$ and $m \in \Z$, is a comparison scale on the set of
natural numbers $\N$, filtered by the Frechet filter. Let $A\in \Z$ be an integer.
We say that a sequence $(f_n)_{n \in \N}$ of elements of $\cO_{\bf a}$ has an
{\it asymptotic expansion to precision $n^{-A}$ relative to the comparison scale $\cE$}
if the following conditions are satisfied :\\
a) for each ${\bf k} \in \N^r$, the sequence of complex numbers $(c_{\bf k}(f_n))_{n \in \N}$
has an asymptotic expansion to precision $n^{-A}$ relative to $\cE$ of the form
$$
c_{\bf k}(f_n)=\sum_{l \in \N, m \in \Z  \atop m \le A} u_{({\bf k},l,m)} (\log n)^l n^{-m} + o(n^{-A}),
$$
when $n$ goes to $\infty$, where the family $(u_{({\bf k},l,m)})_{l \in \N, m \in \Z  \atop m \le A}$ has a finite support;\\
b) there exists $m_0 \in \Z$ such that $u_{({\bf k},l,m)}=0$ for ${\bf k} \in \N^r, l \in \N, m< m_0$;\\
c) for each $l \in \N$ and $m \in \Z, m \le A$, the power series
\begin{equation}\label{asymp-exp-ps}
\sum_{{\bf k} \in \N^r} u_{({\bf k},l,m)} ({\bf s}-{\bf a})^{\bf k}
\end{equation}
converges in some neighbourhood of ${\bf a}$ in $\C^r$.

When these conditions are satisfied and $g_{(l,m)}$ denotes the germ at ${\bf a}$
of the function defined by the power series \eqref{asymp-exp-ps}, then
$$
\sum_{l \in \N, m \in \Z  \atop m \le A} g_{(l,m)} L^l X^m
$$
is a Laurent polynomial in the indeterminate $X$ with coefficients in the formal
power series ring $\cO_{\bf a}[[L]]$. We call it {\it the formal asymptotic
expansion to precision $n^{-A}$
relative to $\cE$} of the sequence of germs $(f_n)_{n \in \N}$.

\begin{rmk}\rm
As in Section \ref{msc}, we can extend these definitions to sequences of germs $(f_n)$,
defined only for $n$ large enough.
\end{rmk}

\begin{rmk}\rm
Unlike in \rmkref{rmk-order-1}, for
a given $m \in \Z, m \le A$, the set of integers $l \in \N$ such that
$g_{(l,m)} \neq 0$ can be infinite. However, condition a) in Section \ref{asymp-holo}
implies that the order at ${\bf a}$ of $g_{(l,m)}$ goes to $\infty$ as $l$ tends to $\infty$.
\end{rmk}

\begin{exm}\label{exm-small-fn}\rm
Let $(f_n)_{n \in \N}$ be a sequence of elements of $\cO_{\bf a}$ satisfying
the following property : there exists an open neighbourhood $D$ of ${\bf a}$ and
a sequence $(F_n)_{n \in \N}$ of holomorphic functions on $D$, such that $f_n$
is the germ of $F_n$ at ${\bf a}$ and $\|F_n\|_D=o(n^{-A})$ for some integer $A$,
as $n$ tends to $\infty$. Then $(f_n)_{n \in \N}$ has an asymptotic expansion to
precision $n^{-A}$ relative to $\cE$ and its formal asymptotic expansion to
precision $n^{-A}$ is the Laurent polynomial $0$.

Indeed, for each ${\bf k} \in \N^r$, there exists a constant $\alpha_{\bf k}$
such that, for any bounded holomorphic function $F$ on $D$, the ${\bf k}$-th 
Taylor coefficient of $F$ at ${\bf a}$ is bounded by $\alpha_{\bf k}\|F\|_D$.
Hence the hypothesis implies that $c_{\bf k}(f_n)=o(n^{-A})$ as $n$ tends to $\infty$. 
\end{exm}

We say that a sequence $(f_n)_{n \in \N}$ of elements of $\cO_{\bf a}$ has {\it a
complete asymptotic expansion relative to $\cE$} if it has an asymptotic expansion to
precision $n^{-A}$ for all $A\in \Z$. In this case there exists a unique Laurent series
$$
G=\sum_{(l,m)\in \N \times \Z} g_{(l,m)} L^l X^m,
$$
in the indeterminate $X$ with coefficients in the formal power series ring
$\cO_{\bf a}[[L]]$ such that the Laurent polynomial obtained by truncating
$G$ to degree $\le A$ in $X$ is the formal asymptotic expansion of
$(f_n)_{n \in \N}$ to precision $n^{-A}$. We call $G$ {\it the formal 
complete asymptotic expansion} of the sequence of germs $(f_n)_{n \in \N}$
(relative to $\cE$). In this case we also get that for all  ${\bf k} \in \N^r$,
the formal asymptotic expansion of the sequence of complex numbers
$(c_{\bf k}(f_n))_{n \in \N}$ (relative to $\cE$) is given by
$$
\sum_{(l,m)\in \N \times \Z} c_{\bf k}(g_{(l,m)}) L^l X^m.
$$

The set $\cF$ of sequences of elements of $\cO_{\bf a}$ having a complete
asymptotic expansion relative to the comparison scale $\cE$ is a unitary
$\cO_{\bf a}$-subalgebra of $\cO_{\bf a}^\N$. The map that associates to
such a sequence its formal complete asymptotic expansion is a unitary
homomorphism of $\cO_{\bf a}$-algebras from $\cF$ to $\cO_{\bf a}[[L]]((X))$.

\begin{exm}\label{exm-n-power}\rm
Let us take $r=1$ and $a$ be an integer. For each $n \ge 2$, let $f_n$ denote the
germ at $a$ of the holomorphic function $s \mapsto n^{1-s}$. The sequence
$(f_n)_{n \ge 2}$ has a complete asymptotic expansion, and its formal complete
asymptotic expansion is 
$$
G=\sum_{(l,m)\in \N \times \Z} g_{(l,m)} L^l X^m,
$$
where $g_{(l,m)}=0$ if $m \neq 1-a$ and $g_{(l,1-a)}$ is the germ at $a$ of the
holomorphic function $s \mapsto \frac{(-1)^l}{l!}(s-a)^l$.

Indeed, for each $k \ge 0$ and $n \ge 2$, we have $c_k(f_n)=\frac{(-1)^k (\log n)^k}{k!} n^{1-a}$,
i.e. condition a) holds, with $u_{(k,l,m)}=0$ if $(l,m) \neq (k,1-a)$ and $u_{(k,k,1-a)}=\frac{(-1)^k}{k!}$.
\end{exm}

\begin{lem}\label{lem-prod-exp}
Let $(f_n)_{n \in \N}$ be a sequence of elements of $\cO_{\bf a}$
and $f$ be a non-zero element in $\cO_{\bf a}$. If the sequence
$(f f_n)_{n \in \N}$ has  an asymptotic expansion to precision $n^{-A}$,
for a given integer $A$ (resp. a complete asymptotic expansion)
relative to the comparison scale $\cE$, then the same
holds for the sequence $(f_n)_{n \in \N}$.
\end{lem}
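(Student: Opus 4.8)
The plan is to prove the statement (in both forms) by induction on $r$, after translating so that ${\bf a}=0$, which affects neither hypothesis nor conclusion. The case $r=0$ is trivial ($\cO_{\bf a}=\C$, $f$ is invertible, and the sequences of complex numbers admitting an asymptotic expansion to precision $n^{-A}$, resp.\ a complete one, relative to $\cE$ form a $\C$-vector space). More generally I would first dispose of the case where $f$ is a unit of $\cO_{\bf a}$: there $f_n=f^{-1}(ff_n)$, and one checks directly from conditions (a)--(c) in the definition of an asymptotic expansion of a sequence of germs that this class of sequences is stable under multiplication by a fixed germ (condition (c) becomes multiplication of the germs $g_{(l,m)}$ by that fixed germ, hence stays a convergent power series; the lower bound on the exponents of $X$ is unchanged). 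The same routine inspection of Taylor coefficients shows stability under finite sums, under $g\mapsto g\circ\Lambda$ for a linear automorphism $\Lambda$ of $\C^r$ fixing $0$, and under $g\mapsto(\partial_{s_r}g)|_{s_r=0}$; I will use these freely.

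\textbf{Reduction to a Weierstrass polynomial.} For the inductive step assume $r\ge 1$ and $f$ not a unit. After a linear change of coordinates $f$ is regular in $s_r$, so Weierstrass preparation gives $f=u\cdot s_r^e\cdot w$ with $u\in\cO_{\bf a}^{\times}$, $s_r\nmid w$, and $w=s_r^d+b_{d-1}s_r^{d-1}+\cdots+b_0$ a Weierstrass polynomial in $s_r$ whose coefficients $b_i$ are germs in $s_1,\ldots,s_{r-1}$ vanishing at $0$; since $s_r\nmid w$ we get $b_0\neq 0$ whenever $d\ge 1$ (and $w=1$ if $d=0$). The unit $u$ is removed by the previous paragraph. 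Division by $s_r^e$ is a shift of the Taylor coefficients in the $s_r$-direction: writing $F_n:=wf_n$ and using $c_{({\bf k}',j)}(s_r^e g)=c_{({\bf k}',j-e)}(g)$, one sees that $(F_n/s_r^e)$ again has the required property, the germs in condition (c) for it being those for $(F_n)$ divided by $s_r^e$ (legitimate because all their $s_r$-coefficients of order $<e$ vanish, by uniqueness of the expansion). This reduces matters to the case $f=w$, a Weierstrass polynomial of degree $d\ge 1$ with $b_0\neq 0$.

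\textbf{Dividing slice by slice.} Write $F_n:=wf_n$ and expand in powers of $s_r$: $F_n=\sum_{j\ge 0}F_n^{(j)}s_r^j$, $f_n=\sum_{j\ge 0}f_n^{(j)}s_r^j$, with $F_n^{(j)},f_n^{(j)}$ germs in $s_1,\ldots,s_{r-1}$. Comparing coefficients of $s_r^j$ in $F_n=wf_n$ gives $b_0f_n^{(j)}=F_n^{(j)}-\sum_{\max(0,j-d)\le i<j}\tilde b_{j-i}f_n^{(i)}$ with $\tilde b_d=1$, $\tilde b_m=b_m$ for $m<d$; the right-hand side involves only finitely many earlier slices. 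Since $c_{{\bf k}'}(F_n^{(j)})=c_{({\bf k}',j)}(F_n)$, each $(F_n^{(j)})_n$ has, in $r-1$ variables, an asymptotic expansion of the required kind, with a lower bound on the $X$-exponents that is the one supplied by $(F_n)_n$ and hence independent of $j$. By strong induction on $j$, the right-hand side, and therefore $(b_0f_n^{(j)})_n$, has such an expansion, so the inductive hypothesis of the lemma in $r-1$ variables applied to the non-zero multiplier $b_0$ yields the same for $(f_n^{(j)})_n$. I would carry along in the induction the strengthened assertion that this division never lowers the bound on the $X$-exponents; this holds in one variable because division there is a Taylor-coefficient shift, and it is transmitted through the reductions above, so the bound stays uniform in $j$.

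\textbf{Reassembly.} It remains to reconstitute $(f_n)_n$. Condition (a) is immediate from $c_{({\bf k}',j)}(f_n)=c_{{\bf k}'}(f_n^{(j)})$, and condition (b) from the uniform bound just obtained. For condition (c), fix $(l,m)$ and set $\Phi:=\sum_{j\ge 0}g^{f_n^{(j)}}_{(l,m)}\,s_r^j$, where $g^{f_n^{(j)}}_{(l,m)}$ is the convergent germ from condition (c) for $(f_n^{(j)})_n$; one must show $\Phi$ converges. Applying to the displayed identity the operation ``take the germ-coefficient of index $(l,m)$ in the formal asymptotic expansion'' (which is additive and commutes with multiplication by a fixed germ), then multiplying by $s_r^j$ and summing over $j$, one gets $w\,\Phi=g^{F_n}_{(l,m)}$ in $\cO'[[s_r]]$, where $\cO'$ is the ring of germs in $s_1,\ldots,s_{r-1}$ at $0$ and $g^{F_n}_{(l,m)}\in\cO_{\bf a}$ is the convergent germ from condition (c) for $(F_n)_n$. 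Dividing $g^{F_n}_{(l,m)}$ by $w$ in the convergent ring (Weierstrass division) gives $g^{F_n}_{(l,m)}=qw+R$ with $q$ convergent and $R\in\cO'[s_r]$ of degree $<d$; then $w(\Phi-q)=R$, and since $w$ is monic of $s_r$-degree $d$, the product of $w$ with any non-zero element of $\cO'[[s_r]]$ fails to be a polynomial of $s_r$-degree $<d$, forcing $\Phi=q$, which is convergent (and $R=0$). This establishes condition (c), completing the induction; the complete case follows by applying the precision-$n^{-A}$ case for every $A$ and using uniqueness of formal expansions. I expect this last reassembly step --- recovering the single germ and its convergence from the slices $f_n^{(j)}$, together with the bookkeeping that keeps the $X$-order bound uniform in $j$ --- to be the main obstacle; the Weierstrass preparation/division inputs and the invariance checks under coordinate changes are otherwise routine.
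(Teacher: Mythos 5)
Your proof is correct, but it takes a genuinely different route from the paper's. The paper splits the lemma into two short algebraic steps: (i) conditions (a) and (b) are obtained by induction on $|{\bf k}|$ together with a linear retraction of the injective map $Q\mapsto PQ$ on homogeneous polynomials, where $P$ is the lowest-order homogeneous component of $f$; this expresses each $c_{\bf k}(f_n)$ as a fixed finite linear combination of the $c_{\bf j}(ff_n)$ and at the same time gives the preservation of the lower bound on the $X$-exponents for free; (ii) condition (c) is obtained by noting that the coefficient germs $g_{(l,m)}$ live a priori only in the completion $\hat\cO_{\bf a}\cong\C[[{\bf T}]]$, that $fg_{(l,m)}\in\cO_{\bf a}$, and that faithful flatness of $\hat\cO_{\bf a}$ over $\cO_{\bf a}$ forces $g_{(l,m)}\in\cO_{\bf a}$. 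You instead run a double induction (on the number of variables and on the $s_r$-slice index) through Weierstrass preparation and division. Your reassembly step --- $w\Phi$ convergent with $\Phi\in\cO'[[s_r]]$ forces $\Phi$ convergent --- is in effect a hands-on proof of exactly the instance of the paper's faithful-flatness lemma that is needed, so the two arguments converge on the same key fact. Your route is longer and requires the extra bookkeeping of the uniform $X$-order bound across slices (which you correctly identify and carry as a strengthened induction hypothesis), but it is constructive and self-contained; the paper's is shorter but leans on the flatness of the completion as a black box.

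One point to tighten: your assertion that ``the product of $w$ with any non-zero element of $\cO'[[s_r]]$ fails to be a polynomial of $s_r$-degree $<d$'' does not follow from monicity alone --- for $w=s_r-1$ one has $(s_r-1)\cdot\bigl(-(1+s_r+s_r^2+\cdots)\bigr)=1$, a polynomial of degree $0<1$. What you need is that $w$ is a Weierstrass polynomial, i.e.\ that $b_0,\ldots,b_{d-1}$ lie in the maximal ideal $\mathfrak m'$ of $\cO'$: then comparing coefficients of $s_r^N$ for $N\ge d$ in $w(\Phi-q)=R$ gives the recursion $p_j=-\sum_{m=0}^{d-1}b_m\,p_{j+d-m}$, which places every coefficient $p_j$ of $\Phi-q$ in $\bigcap_k(\mathfrak m')^k=\{0\}$ by Krull's intersection theorem. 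This is just the uniqueness half of formal Weierstrass division; once it is invoked with that justification, your argument is complete.
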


\begin{proof}
It is sufficient to prove the first statement.
For this we first show that for each ${\bf k}=(k_1,\ldots,k_r) \in \N^r$, the sequence
of complex numbers $(c_{\bf k}(f_n))_{n \in \N}$ has an
asymptotic expansion to precision $n^{-A}$. We do this by induction
on $d=|{\bf k}| :=k_1+\cdots+k_r$.
We know by the induction hypothesis that the result holds 
for the components of $f_n$ of order $<d$. By removing these components
from $f_n$, we can assume that all $f_n$ are of order $\ge d$.
Now denote by $d'$ the order of $f$ and by $P$ the
homogeneous component of degree $d'$ in $f$. The map $Q \mapsto PQ$ is an
injective linear map from the vector space of homogeneous polynomials
with complex coefficients of degree $d$ to the vector space of homogeneous polynomials
with complex coefficients of degree $d+d'$. Hence it has a linear
retraction and therefore there exists a family of complex
numbers $(t_{\bf j})_{{\bf j} \in \N^r \atop |{\bf j}|=d+d'}$
such that for every germ $g \in \cO_{\bf a}$ of order $\ge d$, we have
$$
c_{\bf k}(g)= \sum_{{\bf j} \in \N^r \atop |{\bf j}|=d+d'} t_{\bf j} c_{\bf j}(fg).
$$
This, in particular, applies to the germs $f_n$. Since the sequences
$(c_{\bf j}(f f_n))_{n \in \N}$ have asymptotic expansions to precision $n^{-A}$,
the sequence $(c_{\bf k}(f_n))_{n \in \N}$ also has one.

From the above argument we also get that if the formal asymptotic expansion 
to precision $n^{-A}$ of the sequence of germs $(f f_n)_{n \in \N}$ does not
have terms of degree $< m_0$ in $X$, the same is true for the formal asymptotic
expansion to precision $n^{-A}$ of the sequence of complex numbers
$(c_{\bf k}(f_n))_{n \in \N}$.
Now if we denote by $\hat \cO_{\bf a}$ the completion of $\cO_{\bf a}$
relative to the $\mathfrak m$-adic topology, where $\mathfrak m$ is
its unique maximal ideal,
$\hat \cO_{\bf a}$ is isomorphic to the algebra of formal power series with
coefficients in $\C$ and indeterminates ${\bf T}$.
Hence we get a Laurent series
$$
G=\sum_{(l,m)\in \N \times \Z \atop m \le A} g_{(l,m)} L^l X^m,
$$
in $\hat \cO_{\bf a}[[L]]((X))$ such that for each ${\bf k} \in \N^r$,
the formal asymptotic expansion to precision $n^{-A}$ of the sequence of
complex numbers $(c_{\bf k}(f_n))_{n \in \N}$ (relative to $\cE$) is given by
$$
\sum_{(l,m)\in \N \times \Z \atop m \le A} c_{\bf k}(g_{(l,m)}) L^l X^m.
$$
But then the formal asymptotic expansion to precision $n^{-A}$ of the sequence
of germs $(f f_n)_{n \in \N}$ is 
$$
\sum_{(l,m)\in \N \times \Z \atop m \le A} fg_{(l,m)} L^l X^m,
$$
which implies that $fg_{(l,m)} \in \cO_{\bf a}$ for all $l\in \N$ and $m \in \Z, m \le A$.
Now since $\hat \cO_{\bf a}$ is faithfully flat on $\cO_{\bf a}$
(see \cite[Corollary 7.3.5, p. 67]{AG}), we deduce that
$g_{(l,m)} \in \cO_{\bf a}$ for all $l\in \N$ and $m \in \Z$, by the following general lemma.
This completes the proof.
\end{proof}

\begin{lem}
Let $\phi  : A \to B$ be a faithfully flat commutative ring extension.
Let $a\in A$ be such that $a$ is not a zero divisor in $A$, and $b \in B$.
If $ab \in \phi(A)$, then $b \in \phi(A)$.
\end{lem}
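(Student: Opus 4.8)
The plan is to prove this by faithfully flat descent, the key input being that for a faithfully flat ring map $\phi\colon A\to B$ the initial segment of the Amitsur complex
$$
0\longrightarrow A \xrightarrow{\ \phi\ } B \xrightarrow{\ d\ } B\otimes_A B, \qquad d(\beta):=\beta\otimes 1-1\otimes\beta,
$$
is exact; in particular $\phi$ is injective and $\phi(A)=\ker d$. So it will be enough to show that $d(b)=0$.

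First I would unwind the hypothesis. Here $ab$ means $\phi(a)\,b$, computed in $B$ viewed as an $A$-algebra, so "$ab\in\phi(A)$'' says $\phi(a)\,b=\phi(c)$ for some $c\in A$; and $\phi(a)\,b$ is nothing but the image of $b$ under multiplication by $a$ in the $A$-module $B$. Since $d$ is $A$-linear and $\phi(c)\in\ker d$, this gives at once
$$
a\cdot d(b)=d(a\cdot b)=d\bigl(\phi(a)\,b\bigr)=d\bigl(\phi(c)\bigr)=0.
$$

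Next I would remove the resulting $a$-torsion. The $A$-module $B\otimes_A B$ is flat, being a tensor product of two flat $A$-modules, so tensoring the exact sequence $0\to A\xrightarrow{\,a\,}A$ — exact because $a$ is a non-zero-divisor in $A$ — with $B\otimes_A B$ shows that multiplication by $a$ is injective on $B\otimes_A B$. Hence $a\cdot d(b)=0$ forces $d(b)=0$, and therefore $b\in\ker d=\phi(A)$, as required.

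The one point that needs care, and the reason a naive argument fails, is this last step: deducing $b\in\phi(A)$ from $\phi(a)b\in\phi(A)$ amounts to saying that the $A$-module $B/\phi(A)$ has no $a$-torsion, which is false for a general ring extension. Faithful flatness is used precisely to relocate the question into $B\otimes_A B$, where flatness over $A$ rules out $a$-torsion, while exactness of the Amitsur complex lets $B\otimes_A B$ detect membership in $\phi(A)$. (If desired, exactness of $0\to A\to B\to B\otimes_A B$ can itself be reproved from faithful flatness, but I would simply invoke it as a standard fact.)
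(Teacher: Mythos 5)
Your proof is correct. It is, however, routed through a different piece of descent theory than the one the paper uses, even though the two arguments are structurally parallel: both show that the class of $b$ modulo $\phi(A)$ is annihilated by $a$ and then invoke flatness to rule out $a$-torsion. The paper works directly in the quotient $B/\phi(A)$ and cites the fact (Bourbaki, \emph{Commutative Algebra}, Chap.~1, \S3.5, Prop.~9) that faithful flatness of $B$ over $A$ makes $B/\phi(A)$ a \emph{flat} $A$-module; since $a$ is a non-zero-divisor, multiplication by $a$ is then injective on $B/\phi(A)$, and the class of $b$ dies. You instead embed the question into $B\otimes_A B$ via the Amitsur differential $d$, using exactness of $0\to A\to B\to B\otimes_A B$ to identify $\phi(A)$ with $\ker d$, and then use flatness of $B\otimes_A B$ (rather than of $B/\phi(A)$) to kill $a\cdot d(b)$. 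What your version buys is that the only flatness input is the elementary ``tensor product of flat modules is flat,'' at the cost of invoking exactness of the Amitsur complex, which is a genuinely stronger descent statement than the flatness of $B/\phi(A)$; the paper's version is the more economical of the two, needing only that single Bourbaki proposition. Both are complete and correct, and your closing remark correctly identifies where faithful flatness (as opposed to mere flatness or injectivity of $\phi$) is indispensable.
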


\begin{proof}
Since $B$ is  faithfully flat over $A$, $\phi$ is injective. We may identify
$A$ with a subring of $B$, and $B/A$ is then a flat $A$-module
(see \cite[Chap. 1, \S3.5, Prop. 9]{NB-CA}). Since
$a$ is not a zero divisor, multiplication by $a$ is an injective $A$-linear map
from $A$ to $A$ and therefore induces an injective $A$-linear map from
$B/A$ to $B/A$. Since $ab \in A$, the class of $b$ in $B/A$ is in the 
kernel of this map, which therefore implies that $b \in A$ as the map
is injective.
\end{proof}

\subsection{Asymptotic expansions of sequences of germs of meromorphic functions}\label{asymp-mero}
Again let ${\bf a}$ be a point in $\C^r$. Let $\cM_{\bf a}$ denote the
$\C$-algebra of germs of meromorphic functions at ${\bf a}$. It is identified
to the field of fractions of the integral domain $\cO_{\bf a}$.

For an integer $A \in \Z$, we say that a sequence $(f_n)_{n \in \N}$ of elements of
$\cM_{\bf a}$ has an {\it asymptotic expansion to precision $n^{-A}$}
(resp. a {\it complete asymptotic expansion}) relative to $\cE$
if there exists a common denominator $f$ of $f_n$ (i.e. a non-zero element $f$
of $\cO_{\bf a}$ such that $fh_n \in \cO_{\bf a}$ for all $n \in \N$), such that
the sequence $(fh_n)_{n \in \N}$ of elements of $\cO_{\bf a}$ has an 
asymptotic expansion to precision $n^{-A}$ (resp. a complete
asymptotic expansion) relative to $\cE$, in the sense of Section \ref{asymp-holo}.

If this condition is satisfied for a common denominator $f$ of $f_n$'s,
then, by \lemref{lem-prod-exp}, it also holds for any other common
denominators of the $f_n$'s.
It follows that $(f_n)_{n \in \N}$ has a complete asymptotic expansion if
and only if it has an asymptotic expansion to precision $n^{-A}$ for all $A\in \Z$.
Moreover, the formal Laurent series
$$
\sum_{(l,m)\in \N \times \Z} f^{-1}g_{(l,m)} L^l X^m,
$$
in the indeterminate $X$ with coefficients in $\cM_{\bf a}[[L]]$, where
$$
\sum_{(l,m)\in \N \times \Z} g_{(l,m)} L^l X^m
$$
is the formal complete asymptotic expansion of the sequence $(ff_n)_{n \in \N}$,
does not depend on the choice of the common denominator $f$. This element
of the ring $\cM_{\bf a}[[L]]((X))$ is then called the {\it formal complete asymptotic
expansion} of the sequence of germs of meromorphic functions $(f_n)_{n \in \N}$
relative to $\cE$.

\begin{rmk}\label{rmk-defn}\rm
When all the $f_n$'s belong to $\cO_{\bf a}$, we can take $f=1$, and therefore
the definitions given in this section agree with those of Section \ref{asymp-holo}.
\end{rmk}

The set $\cH$ of sequences of elements of $\cM_{\bf a}$ having a complete
asymptotic expansion relative to the comparison scale $\cE$ is a unitary
$\cM_{\bf a}$-subalgebra of $\cM_{\bf a}^\N$. The map that associates to such
a sequence its formal complete asymptotic expansion is a unitary
$\cM_{\bf a}$-algebra homomorphism from $\cH$ to $\cM_{\bf a}[[L]]((X))$.

\begin{rmk}\label{rmk-variable} \rm
Let ${\bf a} \in \C^r$ and $\pi$ be the germ at ${\bf a}$ of a holomorphic map
defined on a neighbourhood of ${\bf a}$ with values in  $\C^p$ for some $p \ge 0$.
Denote ${\bf b}=\pi({\bf a})$. Suppose that $\pi$ is {\it dominant} at ${\bf a}$, i.e.
the ring homomorphism $f \mapsto f \circ \pi$ from $\cO_{\C^p,\bf b}$ to $\cO_{\C^r,\bf a}$
is injective. This homomorphism then extends to a field homomorphism
from $\cM_{\C^p,\bf b}$ to $\cM_{\C^r,\bf a}$, that we still denote by $f \mapsto f \circ \pi$.

Let $(f_n)_{n \in \N}$ be a sequence of elements of $\cM_{\C^p,\bf b}$ which has an
asymptotic expansion to precision $n^{-A}$, with $A\in \Z$. Then the sequence
$(f_n \circ \pi)_{n \in \N}$ of elements of $\cM_{\C^r,\bf a}$ also has an asymptotic
expansion to precision $n^{-A}$. Moreover, if
$\sum_{(l,m)\in \N \times \Z \atop m \le A} g_{(l,m)} L^l X^m$ is the formal
asymptotic expansion of $(f_n)_{n \in \N}$ to precision $n^{-A}$, then
$\sum_{(l,m)\in \N \times \Z \atop m \le A} (g_{(l,m)}\circ \pi) L^l X^m$
is that of $(f_n \circ \pi)_{n \in \N}$.
\end{rmk}

\subsection{Asymptotic expansions of $\z(s)_{> N}$ and $\z^\star(s)_{\ge N}$}\label{asymp-zeta-tail}
Recall that $s\mapsto \z(s)_{> N} := \sum_{n > N}n^{-s}$ is
holomorphic in the half plane $\Re(s)>1$ and has a meromorphic
extension to $\C$. In this section, we shall prove the following proposition.

\begin{prop}\label{prop-zeta-tail-asymp}
Let $a \in \Z$ be an integer. The sequence of germs at $a$ of meromorphic functions
$(\z(s)_{> N})_{N \ge 2}$ has a complete asymptotic expansion relative to $\cE$,
in the sense of Section \ref{asymp-mero}. The associated formal complete asymptotic
expansion is the formal Laurent series
$$
\sum_{k \ge 0} \sum_{l \ge 0} h_{(l,k)} L^l X^{1-a-k},
$$
where $h_{(l,k)}$ is the germ at $a$ of the function $s \mapsto \frac{(-1)^{l}B_k}{l!k!}(s)_{k-1}(s-a)^l$,
and for $k \ge 0$, $(s)_k$ denotes the Pochhammer symbol $s(s+1)\ldots (s+k-1)$
with $(s)_{-1} :=\frac{1}{s-1}$.
\end{prop}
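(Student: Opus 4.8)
The plan is to establish the asymptotic expansion of $\zeta(s)_{>N}$ at an integer point $a$ by combining the Euler–Maclaurin summation formula with the framework of Sections \ref{asymp-holo} and \ref{asymp-mero}. The starting observation is that for $\Re(s)>1$ one has $\zeta(s)_{>N}=\sum_{n>N}n^{-s}$, and the Euler–Maclaurin formula gives, for any fixed integer $M\ge 1$,
\begin{equation*}
\zeta(s)_{>N}=\frac{N^{1-s}}{s-1}-\frac{N^{-s}}{2}+\sum_{j=1}^{M}\frac{B_{2j}}{(2j)!}(s)_{2j-1}N^{-s-2j+1}+R_M(s,N),
\end{equation*}
where the remainder $R_M(s,N)$ is holomorphic in $\Re(s)>-2M$ and satisfies $\|R_M(\cdot,N)\|_D=O(N^{-2M-\Re(s_0)})$ uniformly on any compact polydisc $D$ in that half-plane. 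This is a classical identity between meromorphic functions on $\C$, valid by analytic continuation once it holds on $\Re(s)>1$. Writing $B_0=1$, $B_1=-\tfrac12$ and noting $(s)_{-1}=\tfrac1{s-1}$, the main terms can be written uniformly as $\sum_{k\ge 0}\frac{B_k}{k!}(s)_{k-1}N^{1-s-k}$ (the odd-index terms beyond $k=1$ vanishing since $B_{2j+1}=0$).

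Next I would transport this into the language of germs. Fix the integer $a$ and work in the polydisc $D$ centred at $a$; for $N\ge 2$ each term $s\mapsto \frac{B_k}{k!}(s)_{k-1}N^{1-s-k}$ is a germ in $\cM_a$ (with a simple pole at $s=1$ when $k=0$, harmless since $1\ne a$ unless $a=1$, in which case $(s)_{-1}$ is genuinely a germ of meromorphic function at $1$). By Example \ref{exm-n-power}, the sequence of germs $s\mapsto N^{1-s}$ at $a$ has complete asymptotic expansion $\sum_{l\ge 0} g_{(l,1-a)}L^lX^{1-a}$ with $g_{(l,1-a)}$ the germ of $\frac{(-1)^l}{l!}(s-a)^l$; multiplying by the fixed germ $\frac{B_k}{k!}(s)_{k-1}$ and using that $\cH$ (resp.\ $\cF$) is an $\cM_a$-algebra (resp.\ $\cO_a$-algebra) on which the expansion map is a homomorphism, the term indexed by $k$ contributes $\sum_{l\ge 0}\frac{(-1)^lB_k}{l!k!}(s)_{k-1}(s-a)^l\,L^lX^{1-a-k}$ to the formal expansion — exactly the claimed $h_{(l,k)}$. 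The remainder $R_M(s,N)$, by Example \ref{exm-small-fn}, has asymptotic expansion to precision $n^{-(2M+\Re(s_0)-1)}$ equal to the zero Laurent polynomial, where $s_0$ ranges over $D$; letting $M\to\infty$ shows the finite sum plus remainder has a \emph{complete} asymptotic expansion whose truncation to any precision $n^{-A}$ matches the stated Laurent series.

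Finally I would check that the stated double series is a legitimate element of $\cM_a[[L]]((X))$: for each fixed exponent $m=1-a-k$ of $X$ there are infinitely many nonzero $h_{(l,k)}$, one for each $l\ge 0$, but the order at $a$ of $h_{(l,k)}$ is $l$, which tends to $\infty$ with $l$ — precisely the condition noted in the second Remark of Section \ref{asymp-holo} — so the coefficient of $X^m$ is a well-defined element of $\cM_a[[L]]$, and $\ord_X$ is bounded below by $1-a$, so we indeed get a Laurent (not merely a formal power) series in $X$. The only subtlety — and the one point requiring genuine care rather than bookkeeping — is the uniformity of the Euler–Maclaurin remainder estimate $\|R_M(\cdot,N)\|_D=o(N^{-A})$ on a fixed polydisc $D$ around $a$: one must choose $M$ large enough that $D\subset\{\Re(s)>-2M\}$ and then control the remainder integral $\int_N^\infty \tilde B_{2M}(\{t\})\,\frac{d^{2M}}{dt^{2M}}(t^{-s})\,dt$ uniformly in $s\in D$, which is routine but is where all the analytic content sits. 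Everything else is an application of the algebra homomorphism property of the formal-expansion map together with Examples \ref{exm-small-fn} and \ref{exm-n-power}.
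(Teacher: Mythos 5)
Your argument is correct in outline, but it takes a genuinely different route from the paper's. You derive the expansion directly from the Euler--Maclaurin summation formula applied to $\sum_{n>N}n^{-s}$, so the coefficients $\frac{B_k}{k!}(s)_{k-1}$ appear at once in the main terms, and the only real analytic work is the uniform remainder bound on a polydisc around $a$ --- which you correctly isolate as the crux (state it precisely if you write this up: it is $O\bigl(N^{\,1-2M-\min_{s\in\overline{D}}\Re(s)}\bigr)$ uniformly on $\overline{D}$ once $2M>1-\Re(s)$ there, and the resulting identity of meromorphic functions on the larger half-plane is what furnishes the continuation of $\zeta(s)_{>N}$ whose germ at $a$ the proposition refers to). The paper never invokes Euler--Maclaurin: it starts from the telescoping identity \eqref{first-step-zeta}, sums it to get $N^{1-s}=\sum_{k\ge 0}\frac{(s-1)_{k+1}}{(k+1)!}\,\zeta(s+k)_{>N}$, proves the \emph{existence} of the expansion by induction on the smallest $k_0$ with $a+k_0>A+1$ (using \lemref{lem-negligible} to discard the tail $k\ge k_0$ and \exmref{exm-small-fn} for the base case), and only then identifies the coefficients by inverting the explicit upper-triangular matrix \eqref{mat-A}; the Bernoulli numbers emerge from the inverse matrix \eqref{mat-B} rather than being put in by hand. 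Your route is shorter and more transparent in depth one, but the paper's telescoping-plus-matrix-inversion argument is chosen because it transfers verbatim to the multiple zeta tails in \propref{prop-multi-zeta-tail-asymp}, where a direct Euler--Maclaurin treatment would be far more awkward. Your remaining bookkeeping --- transporting each term via \exmref{exm-n-power} and the $\cM_{\bf a}$-algebra homomorphism property of the expansion map, and checking that the order of $h_{(l,k)}$ at $a$ grows with $l$ so that condition a) of Section \ref{asymp-holo} holds --- is sound.
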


\begin{proof}
We choose a positive integer $A$, and first prove that the sequence
$(\z(s)_{> N})_{N \ge 2}$ has an asymptotic expansion to precision $N^{-A}$.
Let $k_0$ be the smallest non-negative integer such that $a+k_0>A+1$ and
$D$ be an open disc of radius $\rho<1$ around $a$. We argue by induction
on $k_0$. If $k_0=0$, then $a \ge A+2$ and therefore, $\|\z(s)_{> N}\|_D=o(N^{-A})$
as $N$ tends to $\infty$. Hence $(\z(s)_{> N})_{N \ge 2}$ has an asymptotic
expansion to precision $N^{-A}$, by \exmref{exm-small-fn}.

Now suppose $k_0 \ge 1$. Consider the following identity (see \cite[Eq. (9)]{MSV}),
which is valid for any integer $n \ge 2$ and complex number $s$  :
\begin{equation}\label{first-step-zeta}
(n-1)^{1-s} - n^{1-s}= \sum_{k \ge 0} \frac{(s-1)_{k+1}}{(k+1)!} \ n^{-s-k}.
\end{equation}
Summing this for $n > N$, we deduce that
\begin{equation}\label{zeta-translate-1}
N^{1-s} =\sum_{k\ge 0} \frac{(s-1)_{k+1}}{(k+1)!} \zeta(s+k)_{> N},
\end{equation}
as an equality between holomorphic functions in $\Re(s)>1$.
The interchange of summation on the right hand side is justified
by \lemref{lem-negligible}, more simply by \cite[Prop. 2]{MSV}.
The equality in \eqref{zeta-translate-1} extends as an identity of meromorphic
functions on the whole of $\C$. Note that $\zeta(s+k)_{> N}$ is holomorphic on
$D$ for any $k > 1-a$. Then from \lemref{lem-negligible}, we get that the sum
$$
\sum_{k \ge k_0} \left\| \frac{(s-1)_{k+1}}{(k+1)!}
\z(s+k)_{> N}\right\|_D
$$
exists and it is $o(N^{-A})$ as $N$ tends to $\infty$.
Using Examples \ref{exm-small-fn} and \ref{exm-n-power}, we therefore get that
the sequence of germs at $a$ of meromorphic functions
$$
\left(\sum_{k=0}^{k_0-1} \frac{(s-1)_{k+1}}{(k+1)!} \zeta(s+k)_{> N}\right)_{N \ge 2}
$$
has an asymptotic expansion to precision $N^{-A}$, with the same formal
asymptotic expansion to precision $N^{-A}$ as that of the sequence $(N^{1-s})_{N \ge 2}$.

For $1 \le k \le k_0-1$, the sequence of germs $(\z(s)_{> N})_{N \ge 2}$ 
at $a+k$ has an asymptotic expansion to precision $N^{-A}$, by the induction hypothesis.
Thus for $1 \le k \le k_0-1$, the sequence of germs at $a$ of meromorphic functions
$(\z(s+k)_{> N})_{N \ge 2}$ has an asymptotic expansion to precision $N^{-A}$,
and hence the same holds for the sequence of germs $(\z(s)_{> N})_{N \ge 2}$
at $a$ from the above observation.

We shall now write down this expansion. Following above arguments we get that
for $0 \le j < k_0$, the sequences of germs at $a$ of meromorphic functions
$(N^{1-s-j})_{N \ge 2}$ and
$(\sum_{k=0}^{k_0-1-j} \frac{(s+j-1)_{k+1}}{(k+1)!} \zeta(s+j+k)_{> N})_{N \ge 2}$
have the same formal asymptotic expansion to precision $N^{-A}$.
This can be summarised by the following matrix identity  :
\begin{equation}\label{zeta-mat-1}
{\bf W} = {\bf A} {\bf V},
\end{equation}
where ${\bf V}, {\bf W}$ are column vectors whose entries are the formal asymptotic
expansions at $a$ to precision $N^{-A}$ of the column vectors
$$
\left[\begin{array}{c}
\z(s)_{> N}\\ 
\z(s+1)_{> N}\\
\vdots\\
\z(s+k_0-1)_{> N}
\end{array}\right]
\ \text{and} \
\left[\begin{array}{c}
N^{1-s}\\ 
N^{-s}\\
\vdots\\
N^{2-s-k_0}
\end{array}\right]
$$
respectively, and ${\bf A}$ is the square matrix, whose entries are the germs at $a$
of the following rational functions of $s$ :
\begin{equation}\label{mat-A}
\left[\begin{array}{c c c c}
(s-1)_1 & \frac{(s-1)_2}{2!} & \cdots & \frac{(s-1)_{k_0}}{k_0!}\\ 
0 & (s)_1 & \cdots & \frac{(s)_{k_0-1}}{(k_0-1)!}\\ 
\vdots & \vdots & \ddots & \vdots\\
0 & 0 & \cdots & (s+k_0-2)_1
\end{array}\right].
\end{equation}
The matrix ${\bf A}$ is invertible (see \cite[p. 494-495]{MSV}) and its inverse matrix
${\bf B}$ consists of the germs at $a$ of the following rational functions of $s$ :
\begin{equation}\label{mat-B}
\left[\begin{array}{c c c c c}
\frac{1}{s-1} & B_1 & \frac{(s)_1 B_2}{2!} & \cdots
 & \frac{(s)_{k_0-2} B_{k_0-1}}{(k_0-1)!}\\
0 & \frac{1}{s} & B_1 & \cdots & \frac{(s+1)_{k_0-3} B_{k_0-2}}{(k_0-2)!}\\ 
0 & 0 & \frac{1}{s+1} & \cdots & \frac{(s+2)_{k_0-4} B_{k_0-3}}{(k_0-3)!}\\
\vdots & \vdots & \vdots & \ddots &\vdots\\
0 & 0 & 0 & \cdots & \frac{1}{s+k_0-2}
\end{array}\right],
\end{equation}
where for $n \ge 0$, $B_n$'s are the Bernoulli numbers, defined by the following
generating series :
\begin{equation}\label{Bernoulli}
\frac{x}{e^x-1}=\sum_{n \ge 0} B_n \frac{x^n}{n!}.
\end{equation}
So we can rewrite \eqref{zeta-mat-1} as
\begin{equation}\label{zeta-mat-2}
{\bf V} = {\bf B} {\bf W}.
\end{equation}
The first entry of this matrix identity yields that the formal asymptotic expansion
at $a$ to precision $N^{-A}$ of $(\z(s)_{> N})_{N \ge 2}$ is same as that of 
$$
\left(\sum_{k=0}^{k_0-1} \frac{B_k}{k!} (s)_{k-1}N^{1-s-k} \right)_{N \ge 2}.
$$
From \exmref{exm-n-power}, we know that for $k \ge 0$, the formal complete asymptotic expansion
of the sequence of the germs at $a$ of functions $(N^{1-s-k})_{N \ge 2}$ is
$$
\sum_{(l,m)\in \N \times \Z} g_{(l,m)} L^l X^m,
$$
where $g_{(l,m)}=0$ if $m \neq 1-a-k$ and $g_{(l,1-a-k)}$ is the germ at $a$ of the
holomorphic function $s \mapsto \frac{(-1)^l}{l!}(s-a)^l$.
Thus we get that the formal asymptotic expansion to precision
$N^{-A}$ of the sequence of germs at $a$ of meromorphic functions
$(\z(s)_{> N})_{N \ge 2}$ is
$$
\sum_{k=0}^{k_0-1} \sum_{l \ge 0} h_{(l,k)} L^l X^{1-a-k},
$$
where $h_{(l,k)}$ is the germ at $a$ of the
holomorphic function $s \mapsto \frac{(-1)^{l}B_k}{l!k!}(s)_{k-1}(s-a)^l$.
Since this is true for any positive integer $A$, we conclude the proof.
\end{proof}

\begin{rmk}[Asymptotic expansions of $\z^\star(s)_{\ge N}$]\label{rmk-asymp-zeta-star-tail}\rm
Our arguments above can be readily used to find out the formal
complete asymptotic expansion of the sequence of germs at $a$
of meromorphic functions $(\z^\star(s)_{\ge N})_{N \ge 2}$, where
for $\Re(s)>1$, $\z^\star(s)_{\ge N} := \sum_{n \ge N}n^{-s}$.
If $B_n^\star$ denotes the {\it star Bernoulli numbers}, defined by
$B_n^\star :=(-1)^n B_n$, the formal complete asymptotic
expansion of the sequence of germs at $a$ of $(\z^\star(s)_{\ge N})_{N \ge 2}$
relative to $\cE$ is the formal Laurent series
$$
\sum_{k \ge 0} \sum_{l \ge 0} h^\star_{(l,k)} L^l X^{1-a-k},
$$
where $h^\star_{(l,k)}$ is the germ at $a$ of the function
$s \mapsto \frac{(-1)^{l}B^\star_k}{l!k!}(s)_{k-1}(s-a)^l$.

Indeed, the relevant formula, analogous to \eqref{zeta-translate-1},
that we need in this case is \eqref{second-step-star-tail-r=1} i.e.
\begin{equation}\label{zeta-star-translate-1}
N^{1-s} =\sum_{k\ge 0}(-1)^k \frac{(s-1)_{k+1}}{(k+1)!} \zeta(s+k)_{\ge N}.
\end{equation}
Then in the matrix ${\bf B}$ (or, equivalently in \eqref{mat-B}), the
Bernoulli numbers $B_n$'s will be replaced by the star Bernoulli
numbers $B_n^\star$.
\end{rmk}

\subsection{Asymptotic expansions of $\z(s_1,\ldots,s_r)_{> N}$
and $\z^\star(s_1,\ldots,s_r)_{\ge N}$}\label{asymp-multi-zeta-tail}
We recall \eqref{zeta-tail}, that for $(s_1,\ldots,s_r) \in U_r$
$$
\z(s_1,\ldots,s_r)_{> N} := \sum_{n_1 > \cdots > n_r > N}
n_1^{-s_1} \cdots n_r^{-s_r}.
$$
This function is holomorphic in $U_r$ and has a meromorphic extension to $\C^r$.
In this section we shall prove the following proposition, which generalises
\propref{prop-zeta-tail-asymp}.

\begin{prop}\label{prop-multi-zeta-tail-asymp}
Let $r \ge 1$ be an integer and ${\bf a}=(a_1,\ldots,a_r)\in \Z^r$. The sequence
of germs at ${\bf a}$ of meromorphic functions $(\z(s_1,\ldots,s_r)_{> N})_{N \ge 2}$
has a complete asymptotic expansion relative to $\cE$, in the sense of Section \ref{asymp-mero}.
The associated formal complete asymptotic
expansion is the formal Laurent series
$$
\sum_{{\bf k} \in \N^r} \sum_{l \ge 0} h_{(l,{\bf k})} L^l X^{r-|{\bf a}|-|{\bf k}|}
$$
where $h_{(l,{\bf k})}$ is the germ at ${\bf a}$ of the function
\begin{align*}
{\bf s}=(s_1,\ldots,s_r) \mapsto & \frac{(-1)^{l}B_{k_1} \cdots B_{k_r}}{l! k_1!\cdots k_r!}
(s_1)_{k_1-1} (s_1+s_2+k_1-1)_{k_2-1} \cdots \\
& \times (s_1+\cdots+s_r+k_1+\cdots+k_{r-1}-r+1)_{k_r-1} (|{\bf s}|-|{\bf a}|)^l.
\end{align*}
\end{prop}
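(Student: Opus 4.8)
The plan is to argue by induction on the depth $r$, the base case $r=1$ being exactly \propref{prop-zeta-tail-asymp}. For $r\ge2$ the new ingredient is a translation formula in the first variable: multiplying \eqref{first-step-zeta} (with $n=n_1$) by $n_2^{-s_2}\cdots n_r^{-s_r}$ and summing over $n_1>n_2>\cdots>n_r>N$ for $(s_1,\ldots,s_r)\in U_r$, the left-hand side telescopes in $n_1$ and one obtains
\begin{equation*}
\z(s_1+s_2-1,s_3,\ldots,s_r)_{>N}=\sum_{k\ge0}\frac{(s_1-1)_{k+1}}{(k+1)!}\,\z(s_1+k,s_2,\ldots,s_r)_{>N},
\end{equation*}
an identity of holomorphic functions on $U_r$ (the interchange of sums being justified by \lemref{lem-negligible}) which extends to $\C^r$ as an equality of meromorphic functions. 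This is the depth-$r$ analogue of \eqref{zeta-translate-1}; the only difference is that its left-hand side is now a depth-$(r-1)$ tail rather than $N^{1-s}$, so the depth-$(r-1)$ case of the proposition enters the argument.

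\textbf{Existence.} Fix ${\bf a}=(a_1,\ldots,a_r)\in\Z^r$ and a positive integer $A$. Choose a small closed polydisc $\overline{D}$ around ${\bf a}$ and let $k_0$ be the least non-negative integer with $\tau_{k_0}(\overline{D})\subset\tau_A(U_r)$; we induct on $k_0$. If $k_0=0$, then $\Re(s_1+\cdots+s_i)>i+A$ on $\overline{D}$ for all $i$; bounding $n_1^{-s_1}$ by $N^{-A-\epsilon}n_1^{-s_1+A+\epsilon}$ and using convergence in $U_r$ gives $\|\z(s_1,\ldots,s_r)_{>N}\|_D=o(N^{-A})$, so the claim follows from \exmref{exm-small-fn}. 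If $k_0\ge1$, apply the translation formula with $s_1$ replaced by $s_1+j$ for $0\le j\le k_0-1$; by \lemref{lem-negligible} the tails $\sum_{k\ge k_0-j}$ are $o(N^{-A})$ uniformly on $D$, so modulo negligible sequences each $\z(s_1+s_2-1+j,s_3,\ldots,s_r)_{>N}$ is a finite $\cO_{\C,a_1}$-linear combination of the germs $\z(s_1+k,s_2,\ldots,s_r)_{>N}$, $j\le k\le k_0-1$. The depth-$(r-1)$ hypothesis together with \rmkref{rmk-variable}, applied to the dominant germ $(s_1,\ldots,s_r)\mapsto(s_1+s_2-1+j,s_3,\ldots,s_r)$, supplies the expansions of the left-hand sides; the terms with $1\le j\le k_0-1$ are handled by the $k_0$-induction applied at $(a_1+j,a_2,\ldots,a_r)$ (for which the relevant integer is $k_0-j<k_0$) followed by the dominant substitution $s_1\mapsto s_1+j$. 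The coefficient matrix of this $k_0\times k_0$ linear system is the matrix \eqref{mat-A} with $s$ replaced by $s_1$, hence invertible over $\cM_{\bf a}$; solving for the unknown $\z(s_1,\ldots,s_r)_{>N}$ yields its expansion to precision $N^{-A}$. As $A$ was arbitrary, the sequence has a complete asymptotic expansion at ${\bf a}$.

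\textbf{Identification of the expansion.} Passing to formal complete asymptotic expansions in $\cM_{\bf a}[[L]]((X))$, the inverse of \eqref{mat-A} is the Bernoulli matrix \eqref{mat-B} (with $s\to s_1$), and reading off the first coordinate of the relation ${\bf V}={\bf B}{\bf W}$ gives that the formal expansion of $(\z(s_1,\ldots,s_r)_{>N})$ at ${\bf a}$ equals $\sum_{k\ge0}\frac{B_k}{k!}(s_1)_{k-1}$ times the formal expansion of $(\z(s_1+s_2-1+k,s_3,\ldots,s_r)_{>N})$ at ${\bf a}$. By the induction hypothesis and \rmkref{rmk-variable}, the latter is the depth-$(r-1)$ formula of \propref{prop-multi-zeta-tail-asymp} at $(a_1+a_2-1+k,a_3,\ldots,a_r)$ composed with $(s_1,\ldots,s_r)\mapsto(s_1+s_2-1+k,s_3,\ldots,s_r)$: this composition replaces the factor $|{\bf t}|-|{\bf b}|$ by $|{\bf s}|-|{\bf a}|$ and the exponent $(r-1)-|{\bf b}|-|{\bf k}'|$ by $r-|{\bf a}|-k-|{\bf k}'|$, and it turns the $j$-th Pochhammer factor $(t_1+\cdots+t_j+k'_1+\cdots+k'_{j-1}-j+1)_{k'_j-1}$ into $(s_1+\cdots+s_{j+1}+k_1+\cdots+k_j-(j+1)+1)_{k_{j+1}-1}$ once one relabels $(k,k'_1,\ldots,k'_{r-1})$ as $(k_1,\ldots,k_r)$. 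Prepending the extra factor $(s_1)_{k_1-1}$ and the Bernoulli weight $B_{k_1}$ coming from $\frac{B_k}{k!}(s_1)_{k-1}$, one recovers precisely the product $B_{k_1}\cdots B_{k_r}$, the full Pochhammer chain, and the exponent $X^{r-|{\bf a}|-|{\bf k}|}$ of the statement.

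\textbf{Main obstacle.} The genuinely analytic steps — negligibility of the high-order tails, legitimacy of the formal matrix manipulations, and transport of expansions along dominant maps — are all furnished by \lemref{lem-negligible}, \lemref{lem-prod-exp}, \exmref{exm-small-fn}, \exmref{exm-n-power} and \rmkref{rmk-variable}, exactly as in the proof of \propref{prop-zeta-tail-asymp}, so no new estimates are needed. The real work is organisational: fitting the two inductions (on $r$ and on $k_0$) together with coherent choices of polydiscs, tracking where each term lands under the translation formula, and carrying out the reindexing of the Bernoulli–Pochhammer product so that the depth-$(r-1)$ closed form recombines cleanly into the depth-$r$ one.
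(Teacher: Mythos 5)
Your proposal is correct and follows essentially the same route as the paper: the same double induction on $r$ and on $k_0$, the same translation formula \eqref{multi-zeta-translate-1} with tails controlled by \lemref{lem-negligible}, the same triangular system with coefficient matrix \eqref{mat-A} inverted to the Bernoulli matrix \eqref{mat-B}, and the same transport of the depth-$(r-1)$ expansion along the dominant map $(s_1,\ldots,s_r)\mapsto(s_1+s_2-1+k,s_3,\ldots,s_r)$ via \rmkref{rmk-variable}. The final reindexing of the Bernoulli--Pochhammer product is also carried out exactly as in the paper's proof.
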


\begin{proof}
We prove this by induction on $r$. The case $r=1$ has been treated in
\propref{prop-zeta-tail-asymp}. So we assume $r \ge 2$. 
Now we choose a positive integer $A$, and as in \propref{prop-zeta-tail-asymp},
first prove that the sequence $(\z(s_1,\ldots,s_r)_{> N})_{N \ge 2}$
has an asymptotic expansion to precision $N^{-A}$.

For an integer $k \in \Z$, let $\tau_k({\bf a})$ denote the point $(a_1+k,a_2,\ldots,a_r)\in \Z^r$.
Now let $k_0$ be the smallest non-negative integer such that $\tau_{k_0}({\bf a}) \in \tau_{A}(U_r)$
and $D$ be an open polydisc of polyradius $(\rho_1,\ldots,\rho_r)$ around ${\bf a}$
such that $\rho_1+\cdots+\rho_r<1$. Now if $k_0=0$, then ${\bf a} \in \tau_{A}(U_r)$
and therefore $\|\z(s_1,\ldots,s_r)_{> N}\|_D=o(N^{-A})$ 
as $N$ tends to $\infty$. Hence $(\z(s_1,\ldots,s_r)_{> N})_{N \ge 2}$ has an asymptotic
expansion to precision $N^{-A}$, by \exmref{exm-small-fn}. 

We now argue by induction on $k_0$ and suppose $k_0 \ge 1$.
Starting from \eqref{first-step-zeta}, we deduce that
\begin{equation}\label{multi-zeta-translate-1}
\z(s_1+s_2-1, s_3,\ldots,s_r)_{> N}
=\sum_{k\ge 0} \frac{(s_1-1)_{k+1}}{(k+1)!} \z(s_1+k,s_2,\ldots,s_r)_{> N},
\end{equation}
as an equality between holomorphic functions in $U_r$.
This then extends as an identity of meromorphic functions on $\C^r$.
Note that $\z(s_1+k,s_2,\ldots,s_r)_{> N}$ is holomorphic on $D$ for any
$k$ such that $\tau_k({\bf a}) \in U_r$.
Then from \lemref{lem-negligible}, we get that the sum
$$
\sum_{k \ge k_0} \left\| \frac{(s_1-1)_{k+1}}{(k+1)!} \z(s_1+k,s_2,\ldots,s_r)_{> N}\right\|_D
$$
exists and it is $o(N^{-A})$ as $N$ tends to $\infty$. By the induction hypothesis (for depth $r-1$)
and \rmkref{rmk-variable}, the sequence of germs at ${\bf a}$ of meromorphic functions
$(\z(s_1+s_2-1, s_3,\ldots,s_r)_{> N})_{N \ge 2}$ has an asymptotic expansion to
precision $N^{-A}$. This together with \exmref{exm-small-fn}, yields that the sequence of
germs at ${\bf a}$ of the meromorphic functions
$$
\left(\sum_{k=0}^{k_0-1} \frac{(s_1-1)_{k+1}}{(k+1)!} \z(s_1+k,s_2,\ldots,s_r)_{> N}\right)_{N \ge 2}
$$
has an asymptotic expansion to precision $N^{-A}$, and that
the associated formal asymptotic expansion to precision $N^{-A}$
is same as that of $(\z(s_1+s_2-1, s_3,\ldots,s_r)_{> N})_{N \ge 2}$.

For $1 \le k \le k_0-1$, the sequence of germs $(\z(s_1,\ldots,s_r)_{> N})_{N \ge 2}$ 
at $\tau_k({\bf a})$ has an asymptotic expansion to precision $N^{-A}$, by the induction hypothesis
(for $k < k_0$). Thus for $1 \le k \le k_0-1$, the sequence of germs at ${\bf a}$ of meromorphic functions
$(\z(s_1+k,\ldots,s_r)_{> N})_{N \ge 2}$ has an asymptotic expansion to precision $N^{-A}$,
and hence the same holds for the sequence of germs $(\z(s_1,\ldots,s_r)_{> N})_{N \ge 2}$
at ${\bf a}$ from the above observation.

We shall now write down this expansion. Arguments as above show that
for $0 \le j < k_0$, the sequences of germs at ${\bf a}$ of meromorphic functions
$(\z(s_1+s_2-1+j, s_3,\ldots,s_r)_{> N})_{N \ge 2}$ and
$(\sum_{k=0}^{k_0-1-j} \frac{(s_1+j-1)_{k+1}}{(k+1)!} \z(s_1+j+k,s_2,\ldots,s_r)_{> N})_{N \ge 2}$
have the same formal asymptotic expansion to precision $N^{-A}$.
This can be summarised by the following matrix identity :
\begin{equation}\label{multi-zeta-mat-1}
{\bf Y} = {\bf A} {\bf X},
\end{equation}
where ${\bf X}, {\bf Y}$ are column vectors whose entries are the formal asymptotic
expansions at ${\bf a}$ to precision $N^{-A}$ of the column vectors
$$
\left[\begin{array}{c}
\z(s_1,\ldots,s_r)_{> N}\\ 
\z(s_1+1,\ldots,s_r)_{> N}\\
\vdots\\
\z(s_1+k_0-1,\ldots,s_r)_{> N}
\end{array}\right] 
\ \text{and} \
\left[\begin{array}{c}
\z(s_1+s_2-1, s_3,\ldots,s_r)_{> N}\\ 
\z(s_1+s_2, s_3,\ldots,s_r)_{> N}\\
\vdots\\
\z(s_1+s_2+k_0-2, s_3,\ldots,s_r)_{> N}
\end{array}\right]
$$
respectively, and ${\bf A}$ is the square matrix, whose entries are the germs at ${\bf a}$
of the rational functions in \eqref{mat-A}, with $s$ replaced by $s_1$.
We have already seen that the matrix ${\bf A}$ is invertible and its inverse matrix
${\bf B}$ consists of the germs at ${\bf a}$ of rational functions in \eqref{mat-B},
with $s$ replaced by $s_1$. So we can rewrite \eqref{multi-zeta-mat-1} as
\begin{equation}\label{multi-zeta-mat-2}
{\bf X} = {\bf B} {\bf Y}.
\end{equation}
The first entry of this matrix identity yields that the formal asymptotic expansion
at ${\bf a}$ to precision $N^{-A}$ of $(\z(s_1,\ldots,s_r)_{> N})_{N \ge 2}$
is same as that of 
$$
\left(\sum_{k=0}^{k_0-1} \frac{B_k}{k!} (s_1)_{k-1}\z(s_1+s_2-1+k, s_3,\ldots,s_r)_{> N} \right)_{N \ge 2}.
$$
By the induction hypothesis (for depth $r-1$) and \rmkref{rmk-variable}, we get that for $k \ge 0$,
the formal complete asymptotic expansion of the sequence 
$(\z(s_1+s_2-1+k, s_3,\ldots,s_r)_{> N})_{N \ge 2}$ of the germs at ${\bf a}$ is
$$
\sum_{k_2,\ldots,k_r \ge 0} \sum_{l \ge 0} h^{(k)}_{(l,k_2,\ldots,k_r )} L^l X^{r-|{\bf a}|-k-k_2-\cdots-k_r},
$$
where $h^{(k)}_{(l,k_2,\ldots,k_r )}$ is the germ at ${\bf a}$ of the function
\begin{align*}
{\bf s} \mapsto & \frac{(-1)^{l}B_{k_2} \cdots B_{k_r}}{l! k_2!\cdots k_r!}
(s_1+s_2+k-1)_{k_2-1} \cdots \\
& \times (s_1+\cdots+s_r+k+k_2+\cdots+k_{r-1}-r+1)_{k_r-1} (|{\bf s}|-|{\bf a}|)^l.
\end{align*}
From this we therefore get the desired conclusion.
\end{proof}

\begin{rmk}[Asymptotic expansions of $\z^\star(s_1,\ldots,s_r)_{\ge N}$]
\label{rmk-asymp-multi-zeta-star-tail}\rm
Our arguments above can again be used to prove that the sequence of
germs at ${\bf a}$ of meromorphic functions $(\z^\star(s_1,\ldots,s_r)_{\ge N})_{N \ge 2}$,
where for $(s_1,\ldots,s_r) \in U_r$, $ \z^\star(s_1,\ldots,s_r)_{\ge N} :=
\sum_{n_1 \ge \cdots \ge n_r \ge N} n_1^{-s_1} \cdots n_r^{-s_r}$,
has a complete asymptotic expansion relative to $\cE$. The corresponding
formal complete asymptotic expansion is the formal Laurent series
$$
\sum_{{\bf k} \in \N^r} \sum_{l \ge 0} h^\star_{(l,{\bf k})} L^l X^{r-|{\bf a}|-|{\bf k}|},
$$
where $h^\star_{(l,{\bf k})}$ is the germ at ${\bf a}$ of the function
\begin{align*}
{\bf s}=(s_1,\ldots,s_r) \mapsto & \frac{(-1)^{l}B^\star_{k_1} \cdots B^\star_{k_r}}{l! k_1!\cdots k_r!}
(s_1)_{k_1-1} (s_1+s_2+k_1-1)_{k_2-1} \cdots  \\
& \times (s_1+\cdots+s_r+k_1+\cdots+k_{r-1}-r+1)_{k_r-1} (|{\bf s}|-|{\bf a}|)^l.
\end{align*}

Indeed, the relevant formula, analogous to \eqref{multi-zeta-translate-1},
that we need in this case is \eqref{second-step-star-tail} i.e.
\begin{equation}\label{multi-zeta-star-translate-1}
\z^\star(s_1+s_2-1, s_3,\ldots,s_r)_{\ge N}
=\sum_{k\ge 0}(-1)^k \frac{(s_1-1)_{k+1}}{(k+1)!} \z^\star(s_1+k,s_2,\ldots,s_r)_{\ge N},
\end{equation}
and then in the matrix ${\bf B}$ (or, equivalently in \eqref{mat-B}), the
Bernoulli numbers $B_n$'s will be replaced by the star Bernoulli
numbers $B_n^\star$'s.
\end{rmk}

\section{The case of general points with integral coordinates}\label{gen-point}
Before going to the general case, where $(a_1,\ldots,a_r)$
is any point in $\Z^r$, let us look at the special case of the 
Riemann zeta function. We can, for example, compute the regularised
value $\gamma_1^{(0)}$ of $\sum_{n\ge 1} \log n$, by using
the Stirling formula, as $N$ tends to $\infty$,
$$
\sum_{0 < n < N} \log n = \log (N-1)! = N\log N-N- \frac{1}{2} \log N
+ \frac{1}{2} \log 2\pi + o(1),
$$
so that $\gamma_1^{(0)}=\frac{1}{2} \log 2\pi$. In fact, we have
$\z'(0)=-\gamma_1^{(0)}$. More generally, as we shall see,
we have $D^k \z(0)= (-1)^k \gamma_k^{(0)}$ for $k \ge 1$.
However, this formula does not hold for $k=0$, since $\z(0)=-\frac{1}{2}$
and $\gamma_0^{(0)}=-1$. Similarly, at $-1$, we have
$D^k \z(-1)= (-1)^k \gamma_k^{(-1)}$ for $k \ge 2$,
but not for $k=0,1$. Our next theorem will allow us to understand these features,
even in the broader context of multiple zeta functions.

\begin{thm}\label{thm-gen-reg-exp}
Let $r \ge 0$ be an integer and $(a_1,\ldots,a_r) \in \Z^r$.
The power series \eqref{reg-multi-zeta}
$$
\sum_{k_1,\ldots,k_r \ge 0}
\frac{(-1)^{k_1+\cdots+k_r}}{k_1! \cdots k_r!}
\gamma_{k_1, \ldots, k_r}^{(a_1,\ldots,a_r)}
(s_1-a_1)^{k_1} \cdots (s_r-a_r)^{k_r},
$$
converges in a neighbourhood of $(a_1, \ldots, a_r)$
and extends to a meromorphic function in the whole of $\C^r$, denoted by
$\z^\reg_{(a_1, \ldots, a_r)} (s_1,\ldots,s_r)$.
Then we have the following equality
\begin{equation}\label{gen-reg-exp}
\begin{split}
\z^\reg_{(a_1, \ldots, a_r)} (s_1,\ldots,s_r) =
& \ \sum_{i=0}^r (-1)^i \z(s_{i+1},\ldots,s_r)
\sum_{k_1,\ldots,k_i \ge -1 \atop \sum\limits_{1 \le j \le i}(k_j +a_j)=0}
\frac{B_{k_1+1}^\star}{(k_1+1)!} \cdots \frac{B_{k_i+1}^\star}{(k_i+1)!} \\
& \times (s_i)_{k_i} (s_i+s_{i-1}+k_i)_{k_{i-1}} \cdots
(s_i+\cdots+s_1+k_i+\cdots+k_2)_{k_1},
\end{split}
\end{equation}
between meromorphic functions on $\C^r$.
\end{thm}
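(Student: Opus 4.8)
The plan is to feed the combinatorial identity \eqref{comb-form-1} into the germ-level asymptotic machinery of Section~\ref{asymp-exp} and then extract the constant term. Fix ${\bf a}=(a_1,\ldots,a_r)\in\Z^r$ and regard every function in \eqref{comb-form-1} as a germ at ${\bf a}$ (the factors $\z(s_{i+1},\ldots,s_r)$ and $\z^\star(s_i,\ldots,s_1)_{\ge N}$ via pullback along the appropriate coordinate projections, using \rmkref{rmk-variable}). For each $0\le i\le r$ the sequence $(\z^\star(s_i,\ldots,s_1)_{\ge N})_{N\ge2}$ of germs of meromorphic functions has a complete asymptotic expansion relative to $\cE$: this follows from \rmkref{rmk-asymp-multi-zeta-star-tail} after composing with the automorphism $(s_1,\ldots,s_i)\mapsto(s_i,\ldots,s_1)$ and the projection $\C^r\to\C^i$, both of which are dominant. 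Call its formal complete asymptotic expansion $G_i\in\cM_{\bf a}[[L]]((X))$. Multiplying by the fixed germ $\z(s_{i+1},\ldots,s_r)\in\cM_{\bf a}$ and summing, \eqref{comb-form-1} exhibits $(\z(s_1,\ldots,s_r)_{<N})_{N\ge2}$ as an element of $\cH$ whose formal complete asymptotic expansion is
$$
G=\sum_{i=0}^r(-1)^i\,\z(s_{i+1},\ldots,s_r)\,G_i,
$$
because the assignment of the formal complete asymptotic expansion is an $\cM_{\bf a}$-algebra homomorphism.

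Next I would pass to the holomorphic setting. Each $\z(s_1,\ldots,s_r)_{<N}$ is entire, so every term of the sequence lies in $\cO_{\bf a}$; hence by \rmkref{rmk-defn} together with \lemref{lem-prod-exp} the expansion $G$ actually belongs to $\cO_{\bf a}[[L]]((X))$, i.e. each coefficient $g_{(l,m)}$ is a holomorphic germ at ${\bf a}$. In particular the coefficient $g_{(0,0)}$ of $L^0X^0$ is holomorphic at ${\bf a}$, and its $(k_1,\ldots,k_r)$-th Taylor coefficient there is the constant term of the formal asymptotic expansion of the sequence of $(k_1,\ldots,k_r)$-th Taylor coefficients of $\z(s_1,\ldots,s_r)_{<N}$. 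Expanding $n^{-s}$ about $s=a$, that Taylor-coefficient sequence equals $\frac{(-1)^{k_1+\cdots+k_r}}{k_1!\cdots k_r!}$ times the sequence $(u_N)$ of \defnref{defn-msc}, whose constant term is $\gamma_{k_1,\ldots,k_r}^{(a_1,\ldots,a_r)}$. Therefore $g_{(0,0)}$ is precisely the sum of the power series \eqref{reg-multi-zeta}, which consequently converges near ${\bf a}$; this is the first assertion of the theorem.

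It remains to evaluate $g_{(0,0)}$ from the displayed formula for $G$. Since $\z(s_{i+1},\ldots,s_r)$ carries $X$-degree $0$, the $L^0X^0$-coefficient of $G$ is $\sum_{i=0}^r(-1)^i\,\z(s_{i+1},\ldots,s_r)$ times the $L^0X^0$-coefficient of $G_i$. Reading $G_i$ off from \rmkref{rmk-asymp-multi-zeta-star-tail} (with the reversal of variables already performed), one substitutes $\kappa_m=k_{i-m+1}+1$ for $1\le m\le i$ between the $\N$-valued summation indices $\kappa_1,\ldots,\kappa_i$ there and the indices $k_1,\ldots,k_i\ge-1$ of \eqref{gen-reg-exp}; then requiring the total $X$-degree to vanish turns into the constraint $\sum_{1\le j\le i}(k_j+a_j)=0$ (a finite condition, since $k_j\ge-1$ forces $\sum_j(k_j+1)$ to be the fixed integer $i-(a_1+\cdots+a_i)$, the $i$-th term being absent when this is negative), requiring the $L$-degree to vanish deletes the $(|{\bf s}|-|{\bf a}|)^l$ factor, and the product of Pochhammer symbols together with $B^\star_{\kappa_m}=B^\star_{k_{i-m+1}+1}$ and the convention $(s)_{-1}=(s-1)^{-1}$ rearranges into exactly the inner sum of \eqref{gen-reg-exp}. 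Thus, as germs at ${\bf a}$, $g_{(0,0)}$ coincides with the right-hand side of \eqref{gen-reg-exp}; the latter is a finite sum of products of the meromorphic functions $\z(s_{i+1},\ldots,s_r)$ with rational functions, hence meromorphic on all of $\C^r$, so it furnishes the asserted meromorphic continuation $\z^\reg_{(a_1,\ldots,a_r)}$ and \eqref{gen-reg-exp} holds between meromorphic functions. (The case $r=0$ is the trivial identity $1=1$.)

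The main obstacle is not in any single step but in having the framework of Section~\ref{asymp-exp} — complete asymptotic expansions of sequences of germs of meromorphic functions, and above all the descent from $\hat\cO_{\bf a}$ back to $\cO_{\bf a}$ via faithful flatness in \lemref{lem-prod-exp} that makes $g_{(0,0)}$ holomorphic — in place first; once it is, the argument is essentially bookkeeping, the only genuinely delicate part being the index substitution $\kappa_m=k_{i-m+1}+1$ and the consequent shifting of the arguments of the Pochhammer symbols.
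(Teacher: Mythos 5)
Your proposal is correct and follows essentially the same route as the paper: feed the combinatorial identity \eqref{comb-form-1} into the germ-level asymptotic machinery via \rmkref{rmk-variable} and \rmkref{rmk-asymp-multi-zeta-star-tail}, use \rmkref{rmk-defn} (with \lemref{lem-prod-exp} behind it) to see that the coefficients of the resulting formal expansion are holomorphic germs, identify the Taylor coefficients of $g_{(0,0)}$ with the multiple Stieltjes constants, and then read off $g_{(0,0)}$ by extracting the $L^0X^0$-term with the index shift $k_j\mapsto k_j+1$. The only difference is expository: you make explicit the appeal to \lemref{lem-prod-exp} and the finiteness of the constrained inner sum, both of which the paper leaves implicit.
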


\begin{exm} \rm
Putting $r=a_1=1$, we first recover \eqref{zeta-exp}.
When $r=1$ and $a_1=-n$ for an integer $n \ge 0$, we further have
$$
\z^\reg_{(-n)} (s) = \z(s) - \frac{B_{n+1}^\star}{(n+1)!} (s)_n.
$$
Now for the Pochhammer symbol, we have for $n \ge 0$,
$$
(s)_n=\sum_{k=0}^n (-1)^{n-k} {\bf s}(n,k) s^k=\sum_{k=0}^n {\bf s}(n+1,k+1) (s+n)^k,
$$
where ${\bf s}(n,k)$'s are the Stirling numbers of the first kind,
defined by $(-1)^{n-k}\left[n \atop k\right]$, where
$\left[n \atop k\right]$ denotes the number of permutations
of $n$ elements with $k$ disjoint cycles. In particular, ${\bf s}(0,0) :=1$.
This means that the Taylor expansion of the Riemann zeta
function at $-n$ is of the form $\sum_{k \ge 0}a_k(s+n)^k$, where
$$
a_k=\begin{cases}
    \frac{(-1)^k \gamma_k^{(-n)}}{k!} + {\bf s}(n+1,k+1)\frac{B_{n+1}^\star}{(n+1)!}
    & \text{ when } k \le n,\\
    \frac{(-1)^k \gamma_k^{(-n)}}{k!} & \text{ when } k > n.
    \end{cases}
$$
\end{exm}

\begin{exm} \rm
We also exhibit another interesting example when $r=2$ and $a_1=a_2=0$.
In this case we have
\begin{equation*}
\begin{split}
\z^\reg_{(0,0)} (s_1,s_2)
&= \z(s_1,s_2) - B_1^\star \z(s_2)
+ \frac{B_0^\star B_2^\star}{2!} \frac{s_2}{s_1+s_2} + (B_1^\star)^2
+ \frac{B_0^\star B_2^\star}{2!} \frac{s_1+s_2-1}{s_2-1}\\
&= \z(s_1,s_2) - B_1^\star \z^\reg_{(0)}(s_2)
+ \frac{B_0^\star B_2^\star}{2!} \left( \frac{s_2}{s_1+s_2}
+ \frac{s_1+s_2-1}{s_2-1}\right).
\end{split}
\end{equation*}
Hence
$$
\z(s_1,s_2)=\z^\reg_{(0,0)} (s_1,s_2) + \frac{1}{2} \z^\reg_{(0)}(s_2)
- \frac{1}{12} \left(\frac{s_2}{s_1+s_2} + \frac{s_1+s_2-1}{s_2-1} \right).
$$
Since $\z^\reg_{(0,0)} (0,0)=\gamma_{0,0}^{(0,0)}=1$ and
$\z^\reg_{(0)}(0)=\gamma_{0}^{(0)}=-1$, we deduce from
the above formula that
$$
\lim_{s \to 0} \z(s,0)=\frac{5}{12} \ \text{ and } \ \lim_{s \to 0} \z(0,s)=\frac{1}{3}.
$$
The last formula can be found in \cite{AET}.
\end{exm}

\begin{proof}[Proof of \thmref{thm-gen-reg-exp}]

Recall \eqref{comb-form-1} that for an integer $N \ge 1$, we have the following equality between
meromorphic functions on $\C^r$ :
$$
\z(s_1,\ldots,s_r)_{< N}= \sum_{i=0}^r
(-1)^i \z^\star (s_i,\ldots,s_1)_{\ge N} \z(s_{i+1},\ldots,s_r),
$$
where
$\z(s_1,\ldots,s_r)_{<N} := \sum_{N > n_1 > \cdots >n_r>0} n_1^{-s_1} \cdots n_r^{-s_r}$
is a holomorphic function on $\C^r$ and for $(s_1,\ldots,s_r) \in U_r$,
$\z^\star(s_1,\ldots,s_r)_{\ge N} := \sum_{n_1 \ge \cdots \ge n_r \ge N}
n_1^{-s_1} \cdots n_r^{-s_r}$
which has a meromorphic extension to $\C^r$. We assume $N \ge 2$.

From \eqref{comb-form-1}, Remarks \ref{rmk-variable} and \ref{rmk-asymp-multi-zeta-star-tail},
we get that the sequence of germs at ${\bf a}=(a_1,\ldots,a_r)$ of holomorphic
functions $(\z(s_1,\ldots,s_r)_{< N})_{N \ge 2}$ has a complete
asymptotic expansion relative to $\cE$, in the sense of Section
\ref{asymp-mero}, and therefore also in the sense of Section \ref{asymp-holo},
by \rmkref{rmk-defn}. This shows that if we denote the associated formal complete
asymptotic expansion by $G=\sum_{l \in \N, m \in \Z} g_{(l,m)} L^l X^m$,
then by definition, $g_{(l,m)}$ is the germ at ${\bf a}$ of a holomorphic
function defined in a neighbourhood of ${\bf a}$.

This implies, in particular, that for any $k_1,\ldots,k_r \ge 0$, the sequence of
${\bf k}=(k_1,\ldots,k_r)$-th Taylor coefficients of the sequence of germs at ${\bf a}$
of holomorphic functions $(\z(s_1,\ldots,s_r)_{< N})_{N \ge 2}$, given by
$$
\left(\frac{(-1)^{k_1+\cdots+k_r}}{k_1! \cdots k_r!}
\sum_{N > n_1 > \cdots >n_r>0} \frac{\log^{k_1}n_1 \cdots \log^{k_r} n_r}
{n_1^{a_1} \cdots n_r^{a_r}} \right)_{N \ge 2},
$$
has an asymptotic expansion to arbitrary precision relative to $\cE$.
This result has already been proved independently in \thmref{thm-msc}.
The corresponding formal asymptotic expansion is therefore given by
$\sum_{l \in \N, m \in \Z} c_{\bf k}(g_{(l,m)}) L^l X^m$, where $c_{\bf k}(g_{(l,m)})$ is the
${\bf k}$-th Taylor coefficient of $g_{(l,m)}$ at ${\bf a}$.
Thus in particular, $c_{\bf k}(g_{(0,0)})=\frac{(-1)^{k_1+\cdots+k_r}}{k_1! \cdots k_r!}
\gamma_{k_1, \ldots, k_r}^{(a_1,\ldots,a_r)}$, and hence we get that the power series
\eqref{reg-multi-zeta} converges in a neighbourhood of ${\bf a}$ to a function
whose germ at ${\bf a}$ is $g_{(0,0)}$.

Using Remarks \ref{rmk-variable} and \ref{rmk-asymp-multi-zeta-star-tail}
in \eqref{comb-form-1}, we deduce that $G$ is the Laurent series
$$
\sum_{i=0}^r \sum_{k_1,\ldots,k_i \ge 0} \sum_{l \ge 0} h_{(i,l,k_1,\ldots,k_i)} L^l X^{i-a_1-\ldots-a_i-k_1-\ldots-k_i},
$$
where $h_{(i,l,k_1,\ldots,k_i)}$ is the germ at ${\bf a}$ of the function
\begin{align*}
(s_1,\ldots,s_r) \mapsto &
\frac{(-1)^{i+l}B^\star_{k_1} \cdots B^\star_{k_i}}{l! k_1!\cdots k_i!} \z(s_{i+1},\ldots,s_r)
(s_i)_{k_i-1} (s_i+s_{i-1}+k_i-1)_{k_{i-1}-1} \cdots \\
& \times (s_i+\cdots+s_1+k_i+\cdots+k_2-i+1)_{k_1-1} (s_1+\cdots+s_i-a_1-\cdots-a_i)^l.
\end{align*}
This gives that $g_{(0,0)}$ is nothing but the germ at ${\bf a}$ of the
meromorphic function
\begin{align*}
(s_1,\ldots,s_r) \mapsto & \ \sum_{i=0}^r (-1)^i \z(s_{i+1},\ldots,s_r)
\sum_{k_1,\ldots,k_i \ge -1 \atop \sum\limits_{1 \le j \le i}(k_j +a_j)=0}
\frac{B_{k_1+1}^\star}{(k_1+1)!} \cdots \frac{B_{k_i+1}^\star}{(k_i+1)!} \\
& \times (s_i)_{k_i} (s_i+s_{i-1}+k_i)_{k_{i-1}} \cdots
(s_i+\cdots+s_1+k_i+\cdots+k_2)_{k_1}.
\end{align*}
Therefore, the power series in \eqref{reg-multi-zeta} extends to a meromorphic
function in the whole of $\C^r$, satisfying \eqref{gen-reg-exp}. This completes the proof.
\end{proof}

\begin{rmk}\label{multi-zeta-star-expansion}
\rm We can analogously define {\it multiple star Stieltjes constants} 
$\gamma_{k_1, \ldots, k_r}^{\star(a_1,\ldots,a_r)}$ by considering
$\sum_{N \ge n_1 \ge \cdots \ge n_r \ge 1} \frac{\log^{k_1}n_1 \cdots \log^{k_r} n_r}
{n_1^{a_1} \cdots n_r^{a_r}}$, in place of $u_N$ in \thmref{thm-msc} and
Definition \ref{defn-msc}. Then we can consider the following formal power series
\begin{equation}\label{reg-multi-zeta-star}
\sum_{k_1,\ldots,k_r \ge 0}
\frac{(-1)^{k_1+\cdots+k_r}}{k_1! \cdots k_r!}
\gamma_{k_1, \ldots, k_r}^{\star(a_1,\ldots,a_r)}
(s_1-a_1)^{k_1} \cdots (s_r-a_r)^{k_r},
\end{equation}
which we denote by $\z^{\star\reg}_{(a_1, \ldots, a_r)} (s_1,\ldots,s_r)$.
Following the proof of \thmref{thm-gen-reg-exp} and using \eqref{comb-form-2}
in place of \eqref{comb-form-1}, we get that the power series \eqref{reg-multi-zeta-star}
converges in a neighbourhood of $(a_1, \ldots, a_r)$
and extends to a meromorphic function in the whole of $\C^r$ satisfying the equality
\begin{equation}\label{gen-reg-exp-star}
\begin{split}
\z^{\star\reg}_{(a_1, \ldots, a_r)} (s_1,\ldots,s_r) =
& \ \sum_{i=0}^r (-1)^i \z^\star(s_{i+1},\ldots,s_r)
\sum_{k_1,\ldots,k_i \ge -1 \atop \sum\limits_{1 \le j \le i}(k_j +a_j)=0}
\frac{B_{k_1+1}}{(k_1+1)!} \cdots \frac{B_{k_i+1}}{(k_i+1)!} \\
& \times (s_i)_{k_i} (s_i+s_{i-1}+k_i)_{k_{i-1}} \cdots
(s_i+\cdots+s_1+k_i+\cdots+k_2)_{k_1}
\end{split}
\end{equation}
of meromorphic functions on $\C^r$.

\end{rmk}

\bigskip

\noindent {\bf Acknowledgements  :}
I am highly thankful to Prof. Joseph Oestel\'e for numerous indispensable inputs
during this work and formation of this article. I am also thankful to him for
introducing me to the language of asymptotic expansion of sequences of
complex numbers and of germs of holomorphic and meromorphic functions,
relative to a comparison scale. This work was carried out in 2017-18, in
Institut de Math\'ematiques de Jussieu, with support from IRSES Moduli and LIA.

\end{document}